\newcommand{\rev}[1]{\begingroup#1\endgroup}
\DeclareMathAlphabet{\mathpzc}{OT1}{pzc}{m}{it}
  \newenvironment{owndesc}%
    {\begin{description}[leftmargin = 0.2cm, labelsep = 0.2cm]}
    {\end{description}}
\newcommand{\bul}{\mathfrak{s}}
\newcommand{\squ}{\mathfrak{d}}
\newcommand{\fV}{f_{\mathpzc{V}}}
\newcommand{\fE}{f_{\mathpzc{E}}}
\newcommand{\mc}{\mathcal}
\newcommand{\parent}{\ensuremath{\operatorname{par}}}
\newcommand{\lca}{\ensuremath{\operatorname{lca}}}
\newcommand{\Gen}{\ensuremath{\mathbb{G}}}
\newcommand{\Spe}{\ensuremath{\mathbb{S}}}
\newcommand{\V}{\ensuremath{\mathcal{V}}}
\newcommand{\W}{\ensuremath{\mathcal{W}}}
\newtheorem{defi}{Definition}
\newtheorem{observe}{Observation}
\providecommand{\keywords}[1]{\textbf{\textit{Keywords: }} #1}
\newtheorem{theorem}{Theorem}
\newtheorem{lemma}{Lemma}
\newtheorem{proposition}{Proposition}
\newtheorem{corollary}{Corollary}
\title{Reconciling Event-Labeled Gene Trees with MUL-trees and Species Networks}
\author[1,2]{Marc Hellmuth}
\author[3]{Katharina T.\ Huber} 
\author[3]{Vincent Moulton}
\affil[1]{Institute	 of Mathematics and Computer Science, University of Greifswald, Walther-
  Rathenau-Strasse 47, D-17487 Greifswald, Germany  \\ 	
	Email: \texttt{mhellmuth@mailbox.org}}
\affil[2]{
	Saarland University, Center for Bioinformatics, Building E 2.1, P.O.\ Box 151150, D-66041 Saarbr{\"u}cken, Germany
	  }
\affil[3]{School of Computing Sciences, University of East Anglia, Norwich, UK \\ 
	Email: \texttt{K.Huber@uea.ac.uk} and  \texttt{v.moulton@uea.ac.uk} 
 }
\date{}
\begin{document}

\maketitle

\abstract{ 
Phylogenomics commonly aims to construct evolutionary trees from genomic
sequence information. One way to approach this problem is to first estimate
event-labeled gene trees (i.e., rooted trees whose non-leaf vertices are
labeled by speciation or gene duplication events), and to then look for a
species tree which can be reconciled with this tree through a
\emph{reconciliation map} between the trees. In practice, however, it can
happen that there is no such map from a given event-labeled tree to
\emph{any} species tree. An important situation where this might arise is
where the species evolution is better represented by a 
\emph{network} instead of a
tree. In this paper, we therefore consider the
problem of reconciling event-labeled trees with species networks. In
particular, we prove that any event-labeled gene tree can be reconciled
with some network \rev{and that, under certain mild assumptions
on the gene tree, the network can even be assumed to be multi-arc free.} 
To prove this result, we show that we
can always reconcile the gene tree with some multi-labeled (MUL-)tree,
which can then be ``folded up" to produce the desired reconciliation and
network. In addition, we study the interplay between reconciliation maps
from event-labeled \rev{gene} trees to MUL-trees and networks.
Our results could be
useful for understanding how genomes have evolved after undergoing complex
\rev{evolutionary} events such as polyploidy.
}

\smallskip
\noindent
\keywords{tree reconciliation; network reconciliation;
			    gene evolution; species evolution;  phylogenetic network; MUL tree; triples}

\sloppy
\sloppy

\section{Introduction}
\label{sec:intro}

Phylogenomics aims to find plausible hypotheses about the evolutionary
history of species based on genomic sequence information.
Such hypotheses often take the form of an evolutionary tree
whose leaves are labeled by the species in question, or a \emph{species tree}.
There are various ways to construct species trees from genomics data, many of 
which involve estimating the evolutionary history of the 
underlying genes, and then using the resulting \emph{gene trees} to construct 
a species tree \cite{posada2016phylogenomics}. 

A recent example of such an approach relies on using \emph{event-labeled gene trees} \cite{HHH+12,Hellmuth2017,HW:16b}. 
These are trees in which the  leaves correspond to the genes, 
inner vertices to ancestral genes, and the label of an inner vertex corresponds to 
the divergence event that led to the offspring. Such
events include \emph{speciation} and \emph{duplication} events
which correspond to genes that are orthologous or paralogous \cite{Fitch:70,Fitch:00},
respectively, that is,  they diverged after a speciation event or from one another within a 
species after a duplication event. To estimate event-labeled gene trees, 
sequence similarities and synteny information is first used to determine 
which genes are orthologous \cite{Altenhoff:09,AGGD:13,ASGD:11,CMSR:06,Lechner:11a,Lechner:14,inparanoid:10,TG+00,T+11},
and the trees are then estimated from the resulting orthology relations 
\cite{HHH+13,dondi2017approximating,lafond2015orthology,DEML:16,DONDI17,LDEM:16} 
using an underlying ``cograph''-structure \cite{HW:16,HHH+13}. 

Once an event-labeled gene tree has been estimated, the
task then becomes finding a species tree which accommodates
the gene history. This is essentially done by looking for a mapping
(a so-called \emph{reconciliation}) from the gene tree into some 
species tree which respects ancestral relationships. Note that the problem of 
reconciling gene trees with species trees
has been studied for some time (mainly for non-event-labeled gene trees)
\cite{BLZ:00,GJ:06,RLG+14,DCH:09,DES:14,VSGD:08,lafond2012optimal,SLX+12,huson2011survey,page1998genetree,
STDB:15,SD:12,MMZ:00,DRDB11,Doyon2010,EHL10,GCMRM:79,Tofigh2011}.
However, one issue with this overall process for constructing species trees is that it may not be possible to
reconcile an event-labeled gene tree with \emph{any} species tree \cite{HHH+12,Hellmuth2017}. 
This may be because of inaccuracies in estimating the trees, but a more fundamental 
problem can arise due to the fact that the species tree is not an appropriate way
to represent the evolution of the species in question. 

More specifically, it is well-known that species can come together with each 
other to form new ones through 
processes such as hybridization or recombination \cite{gontier2015reticulate} (a process
sometimes called \emph{reticulate  evolution}).
In this case it can be more appropriate to represent 
species evolution using a \emph{species network} instead
of a species tree \cite{bapteste2013networks}. 
Interestingly, as we shall see, it may be possible to reconcile 
an event-labeled gene tree with a species network \rev{even though}
it is not possible
to reconcile it with any species tree. The aim of this
paper is to better understand why this is the case, and to 
develop new theory and techniques for reconciling event-labeled gene trees with
\rev{species} networks.
Note that some work has appeared on
the problem of reconciling \emph{non event-labeled} gene trees
\rev{with}
\emph{given} species networks 
\cite{SMC:17,To2015}. However, to our best knowledge, 
the problem of reconciling event-labeled gene trees with \emph{unknown} species 
networks has not yet been considered.

The rest of this paper is organized as follows. 
We start with basic definitions concerning 
phylogenetic trees and networks  in Section~\ref{sec:prelim}. 
In Section~\ref{sec:rec-network}, we then give a new definition of a reconciliation map
between arbitrary, possibly non-binary, event-labeled gene trees and 
species networks, called  \emph{TreeNet-reconciliation maps}. 
We show that this definition is a natural generalization of the reconciliation
maps defined in \cite{To2015},
between \emph{binary} gene trees and \emph{binary} species networks. 
In Section~\ref{sec:rec-tree}, we also prove that TreeNet-reconciliation maps are equivalent to
reconciliation maps between trees \cite{HHH+12,Hellmuth2017,DCH:09}
in case the considered species network is a tree. Continuing with this theme, we also
examine the problem of determining when there exists a TreeNet-reconciliation  from 
an event-labeled tree to some species tree in Section~\ref{sec:triples}.

In Section~\ref{sec:existence} we show that for
every event-labeled gene tree $T$, there always exists a TreeNet-reconciliation map from 
$T$ to some species network $N$.  
To prove this result we employ the concept of so-called MUL-trees  \cite{cui2012polynomial,HES:14,lott2009inferring,huber2006reconstructing,scornavacca2009gene,czabarka2013generating,HM:06}.
Our proof essentially relies on first defining a reconciliation map between 
an event-labeled tree and a multiple-labeled tree, or \emph{MUL-tree}. Once
we have a reconciliation between $T$ and $M$, 
we then apply a so-called \emph{folding map} \cite{huber2016folding} 
to $M$ to produce a species
network $N$ (as explained in Section~\ref{sec:fold}), which induces 
the desired TreeNet-reconciliation map between $T$ and $N$.
This whole process allows us to determine 
the TreeNet-reconciliation map and network for the gene tree in polynomial-time 
in the number of genes.

The use of folding maps can result in species networks which 
contain multi-arcs, which may be somewhat unrealistic for applications.
In Section~\ref{sec:existence-multiarcFree} we therefore strengthen
the main result in Section~\ref{sec:existence}, showing that given an event-labeled gene tree
\rev{that satisfies a simple biological constraint on its speciation vertices (cf.\ Def.\ \ref{def:well-behaved}), there 
	always exists a TreeNet-reconciliation map from $T$ to some  \emph{multi-arc free} species network.}
Finally, in Section \ref{sec:rec-MUL}, we study the interplay between 
TreeNet-reconciliation maps from event-labeled gene trees to MUL-trees and networks.
\rev{This topic  has recently become of interest in the phylogenomics literature, where 
it has been applied to understand polyploid evolution \cite{gregg2017gene}.
In particular, in this work the authors use the interplay between MUL-trees and phylogenetic
networks  to try to distinguish between different types of ploidy events.}
We conclude with a discussion of some open problems in Section~\ref{sec:rec-MUL}.

\section{Preliminaries}
\label{sec:prelim}

\subsubsection*{Phylogenetic Networks and Trees}

In what follows, the set $X$ always denotes a finite set of size at least two.
Moreover, $\Gen$ and $\Spe$ will denote a set of genes and species,
respectively.

\rev{In general, in this paper a directed graph or \emph{digraph}, for short,
  may contain multi-arcs, that is, 
	two or more arcs that connect two vertices.}
We consider rooted, not necessarily 
binary phylogenetic trees and networks, called trees or networks for short
(see e.g. \cite{steel2016phylogeny} for an overview of phylogenetic trees and networks). To be more precise:
\begin{defi}
A {\em network} $N=(V,E)$ on $X$ is a directed acyclic graph (DAG)
with leaf set $L(N)=X$, multi-arcs allowed  \rev{that satisfies}  the following properties 
\begin{enumerate}
	\item[(N1)] There is a single root $\rho_N$ with indegree 0 and outdegree 1
						such that its unique child has indegree 1 and outdegree at least 2;
	\item[(N2)] $x\in X$ if and only if $x$ is an outdegree-0 \rev{and indegree-1} vertex. 
	\item[(N3)] Each vertex $v\in V^0\coloneqq V\setminus X$ with $v\neq \rho_N$ has either 
		
						indegree 1 and outdegree greater than 1 (\emph{tree vertex})						or 

						indegree greater than 1 and outdegree 1 (\emph{hybrid vertex})						
\end{enumerate}
If $X$ is a set of genes $\Gen$ (resp.\ species $\Spe$), then $N$ is called a \emph{gene} (resp.\ \emph{species})
\emph{network}. 
A network that has no multi-arcs is called {\em multi-arc free} and
a multi-arc free network that has no hybrid vertices is
called a (phylogenetic) tree.

We call a tree \emph{reduced} if it is obtained from a phylogenetic tree $T$ by removing 
the root $\rho_T$ and its unique incident arc from $T$. Hence, the root of a reduced 
tree always has degree greater or equal to 2. 
\label{def:network}
\end{defi}

Note, Property (N1) differs slightly from the usual notion as used e.g.\ in \cite{To2015, SMC:17}, 
where $\rho_N$ has indegree 0 and outdegree 2. 
We need this extra condition since our notion of a reconciliation map 
between a gene tree and a species network 
allows for the possibility that an event occurred before the first speciation event in the network.

Now, suppose that $N=(V,E)$ is a phylogenetic network with leaf set $X$.
All vertices within $V\setminus X$ are called \emph{inner} vertices. 
Given an arc $e=(x,y)$ in  $N$, $y$ is the \emph{head} of $e$, denoted by $h_N(e)$, and
$x$ is the \emph{tail} of $e$, denoted by $t_N(e)$.  In this case, we also
say that $x$ is the \emph{parent} $\parent(y)$ of $y$.
In addition, for any DAG $G=(V,E)$ and any vertex $v\in V$ that has a 
unique incoming arc, we denote this arc by \emph{$e^v$}. 

A \emph{directed path} from a vertex $x_1$ to another vertex $x_l$ in $N$ is a non-empty 
sequence $P=(x_1, \dots, x_l)$ of pairwise disjoint vertices such that $(x_i,x_{i+1})\in E$, $1\leq i\leq l-1$. 
We often denote the directed path $P=(x_1, \dots, x_l)$ by $P(x_1,x_l)$ and
also write  $P' = Pv$ (resp.\ $P' = Pe$ with $e = (x_l,v)$) for the directed path $P'=(x_1, \dots, x_l,v)$ that is obtained from 
the directed path $P(x_1,x_l)$ by adding the vertex $v$. 
Moreover, for $x\in V$ and $e\in E$, we define the \emph{directed path from $x$ to  $e$ }
in $N$ as the directed path $P$ from $x$ to the head $h_N(e)$, 
if $P$ exists. 	Thus, the directed path from $x$ to the arc $e= (x,y)$ coincides with $e$. 	

Let $u,v\in V$. Then,  
a vertex $v\in V$ is called a \emph{descendant} of  $u$ (in symbols, $v \preceq_N u$),
if there is a directed path (possibly reduced to a single vertex) 
in $N$ from $u$ to $v$. In this case, we also call $u$ an \emph{ancestor} of $v$, 
denoted by $u \succeq_N v$. 
If $u \preceq_N v$ or $v \preceq_N u$ then $u$ and $v$
are \emph{comparable} and otherwise, \emph{incomparable}.  
Moreover, if $v \preceq_N u$ and $u\neq v$, then 
$v$ is \emph{below} $u$ and $u$ \emph{above} $v$. 
Note, since $N$ is a DAG, an arc $e=(u,v)$ always implies that $u\succ_N v$. 
For a vertex $x\in V$, we write $L_N(x)\coloneqq\{ y\in X \mid y\preceq_N x\}$ for the
set of leaves in $X$ that are below or equal to $x$.

For our discussion below we need to extend the definition of $\preceq_N$ to $V\cup E$. 
More precisely, for the
arc $e=(u,v)\in E$ we put $x \prec_N e$ if  $x\preceq_N v$ and $e
\prec_N x$ if $u\preceq_N x$. In this case, 
the vertex $x$ and the arc $e$ are \emph{comparable}, and \emph{incomparable}
otherwise.   If $e=(u,v)$ and $f=(a,b)$ are arcs in $N$, then
we define $e\preceq_N f$ to hold if $v\prec u\preceq b\prec a$ or $e=f$.
In this case, the arcs $e$ and $f$ are also  \emph{comparable}, and \emph{incomparable}
otherwise. 

We say that a network $N'$ is a \emph{subdivision} of $N$, if $N'$ can be obtained from $N$
by replacing arcs $(u,v)$ of $N$ by directed paths from $u$ to $v$. Hence, a network $N$ is also
a subdivision of itself.  Let $W\subseteq V$.  The subgraph of $N$ with vertex set $W$ that contains all arcs $(x,y)\in E$
for which $x,y\in W$ is called \emph{induced subgraph} of $N$ and is denoted by $N[W]$.

\subsubsection*{Common Ancestors}

For a non-empty subset of leaves $A\subseteq X$ of a
phylogenetic tree $T=(V,E)$ on $X$, we define $\lca_T(A)$, the
\emph{least common ancestor of $A$}, to be the unique $\preceq_T$-minimal vertex
of $T$ that is an ancestor of every vertex in $A$. In case $A=\{x,y \}$, we put
$\lca_T(x,y)\coloneqq\lca_T(\{x,y\})$ and if $A=\{x,y,z \}$, we put
$\lca_T(x,y,z)\coloneqq \lca_T(\{x,y,z\})$. 
If $e,f\in E$ and $x\in V$, then we define $\lca_T(x,e) \coloneqq \lca_T(x,t_T(e))$
and $\lca_T(e,f) \coloneqq \lca_T(t_T(e),t_T(f))$.

Given a tree vertex $z$ in a network $N = (V,E)$, two (not necessarily disjoint) directed paths in $N$
that start from $z$ are said to be \emph{separated (by $z$)} if each path contains a different
child of $z$. Given (not necessarily distinct) $x,y \in V$, we denote 
by $Q_N(x,y)$ the set of vertices $z$ of $N$ such that there exists a directed path $P(z,x)$
and a directed path $P(z,y)$ such that $P(z,x)$ and $P(z,y)$ are separated by $z$. 
Note, $Q_N(x,y) = Q_N(y,x)$. If $z\in Q_N(x,y)$, then we also say that \emph{$x$ and $y$ are separated by $z$}. 
By way of example, we have $Q_N(x,x) = \{a,w\}$ and $Q_N(x,y) =\{b,w\}$ for the network $N$ in 
Fig.\ \ref{fig:MULsimple-foldN-2} (right).
We generalize the latter also to arcs and put 
$Q_N(x,e)\coloneqq Q_N(x,h_N(e))$ and $Q_N(e,f)\coloneqq Q_N(h_N(f),h_N(e))$
\rev{for all arcs} $e,f\in E$.

The definition of $Q_N(x,y)$ was presented in  \cite{To2015}, to generalize
the concept of least common ancestors in binary trees to binary networks.  We now generalize this definition 
to cope with networks that are not necessarily binary. More specifically, given a collection
      $x_1\dots,x_k$ of not necessarily distinct vertices in $N$, we let
     \[Q^2_N(x_1,\dots,x_k)\coloneqq \bigcup_{1\leq i \leq j\leq k} Q_N(x_i,x_j)\]
			be the set of vertices that separate any $x_i$ and $x_j$, $1 \le i \leq j \le k$.

\subsubsection*{Event-labeled Gene Trees}

	An \emph{event-labeled gene tree} $(T;t,\sigma)$ on $\Gen$ is a reduced  tree $T=(V,E)$
	with leaf set $\Gen$ (a set of genes)	a labeling map of events $t\colon V^0\to \{\bul,\squ\}$, 
  and a surjective map
	$\sigma\colon \Gen\to \Spe$, called \emph{gene-species map}, that assigns to each gene $g\in \Gen$
	the species $\sigma(g)$ that contains $g$. 

	Note, the main difference between the structure  
	of an event-labeled gene tree $(T;t,\sigma)$
    and a phylogenetic tree $T'$ as in Definition~\ref{def:network} is that 
		the root of $T$ has at least two children, while the root of $T'$ has exactly one child. 
	The events $\bul$ and $\squ$ are called \emph{speciation} and \emph{duplication}, respectively.
	Moreover, a vertex $v\in V^0$ with $t(v) = \bul$ is called a
        \emph{speciation vertex}, and otherwise, 
    	a \emph{duplication vertex}.
	In all figures that contain event-labeled gene trees, the events $\bul$ and $\squ$
	are represented as $\bullet$ and $\square$, respectively.

	In addition, for a subset $W \subseteq \Gen$, we put $\sigma(W)\coloneqq \{\sigma(w)\mid w\in W\}$.
		
	In what follows, we will always assume that for an event-labeled gene tree $(T;t,\sigma)$ on $\Gen$, 
	$|\sigma(\Gen)|>1$ holds, i.e., the genes in $\Gen$ are  from at least two  	distinct species.

\subsubsection*{MUL-trees}

Informally speaking, 
a \emph{MUL-tree (\emph{MU}ltiply-\emph{L}abelled-tree)} is a  phylogenetic tree $M$  
(cf.\ Definition~\ref{def:network}) where each leaf in $M$ is labeled by an element in $\Spe$, but
where different leaves may have the same label. 
More precisely, a MUL-tree \emph{(on $\Spe$)} is a pair $(M,\chi)$ \rev{where $M$ 
is a phylogenetic tree and} $\chi\colon \Spe \to  2^{L(M)}- \{\emptyset\}$ is a map
such that for all $x,y \in \Spe$ distinct, $\chi(x) \cap \chi(y) = \emptyset$ and for all 
$l \in L(M)$ there exists some $x \in \Spe$  with $l \in \chi(x)$.
Thus, in a MUL-tree $M$ the labeling of the leaf set $L(M)$ of $(M,\chi)$ is the multiset whose underlying set is $\Spe$. 
Note that our definition of a MUL-tree
is equivalent to the definition given in \cite[Section 2.2]{huber2016folding}, 
except that in the MUL-tree defined here we have an ``extra'' arc which is adjacent to the root (so we get a root with 
outdegree 1). As with networks, this extra arc is required to accommodate events in reconciliations which
occur before the first speciation.

If we allow $M$ to contain vertices with 
indegree 1 and outdegree 1 as well, \rev{then} we call $M$ a \emph{pseudo MUL-tree} (so, 
in this setting, any MUL-tree is also considered as a pseudo MUL-tree).
For a pseudo MUL-tree $M=(D,U)$, we also define 
\[D^1 = \{v\in D \mid v \text{ has in- and outdegree one in } M \}. \]
We say that two (pseudo) MUL-trees $(M_1,\chi_1)$ and $(M_2,\chi_2)$ on
$X$ are {\em isomorphic} if there is a digraph isomorphism
$\varphi: V(M_1) \to V(M_2)$ such that, for all $x\in X$
and $v \in V (M_1 )$, we have $v\in \chi_1(x)$ if
and only if $\varphi(v)\in \chi_2 (x)$.
In addition,we say that 
a pseudo-MUL-tree $(M',\chi)$ is a \emph{simple subdivision} of 
the MUL-tree $(M=(D,U),\chi)$  
if $M'$ is a subdivision of $M$ that is obtained from $M$ by replacing 
every arc $\rev{e=}(u,v)$ of $M$, where $v$ is a leaf of $M$ such that 
$v\in \chi(x)$ with	$x \in \Spe$  and $|\chi(x)|\geq 2$  
by the path $(u,v_e,v)$ in $M'$, where $v_e\notin D$. 
The latter, implies in particular  that $L(M)=L(M')$ and hence, 
  the map $\chi$ for $M'$ is well-defined.

\section{Reconciliation Maps to Networks}
\label{sec:rec-network}

In \cite{To2015}, a notion of a reconciliation map (which we shall call
a \emph{biTreeNet-reconciliation map}) from
a phylogenetic tree to a network was presented (see Definition~\ref{def:alpha-map}).
This map, however, 
assumes that the event labels on the tree are unknown, 
while a species network is given. Moreover, the \emph{biTreeNet-reconciliation map} ``axioms''
explicitly refer to binary gene trees and binary species networks. 
In practice, when gene trees are obtained from e.g.\ orthology-data, we cannot hope to obtain fully 
resolved (i.e., binary) gene trees. To overcome this problem
and therefore also be able to study reconciliations between non-binary gene trees 
and (possibly unknown) non-binary networks we next introduce the novel concept of a
\emph{TreeNet-reconciliation map}. That TreeNet-reconciliation maps indeed generalize
biTreeNet-reconciliation maps is shown in Proposition~\ref{prop:a-Implies-mu}. 
Furthermore, TreeNet-reconciliation maps provide a natural generalization of the
framework as used in
\cite{HHH+12,Hellmuth2017,Nojgaard2018} for the reconciliation of trees,
which is shown in Section \ref{sec:rec-tree}.

\begin{defi}[TreeNet-reconciliation map] \label{def:mu} 
  Suppose that $\Spe$ is a set of species, that $N=(W,F)$ is a  network on
  $\Spe$, and that $(T=(V,E);t,\sigma)$ is an event-labeled gene tree on $\Gen$. 
	Then we say that \emph{$N$ is a (species) network for
  $(T;t,\sigma)$} if there is a map $\mu\colon V\to W\cup F$ such that, for all $x\in V$:  
\begin{description}
\item[(R1)] \emph{Leaf Constraint.}  If $x\in \Gen$   then $\mu(x)=\sigma(x)$. 		\vspace{0.03in}
\item[(R2)] \emph{Event Constraint.}
	\begin{itemize}
		\item[(i)]  If $t(x)=\bul$ and $x$ has children $x_1,\dots,x_k$, $k\geq 2$, 
							then 

	 $\mu(x) \in Q^2_N(\mu(x_1),\dots,\mu(x_k))$.
		\item[(ii)] If $t(x) = \squ$, then $\mu(x)\in F$. 
	 	\end{itemize} \vspace{0.03in}
\item[(R3)] \emph{Ancestor Constraint.}		\\	
		Suppose $x,y\in V$ with $x\prec_{T} y$.
	\begin{itemize}
		\item[(i)] If $t(x) = t(y) = \squ$, then $\mu(x)\preceq_N \mu(y)$, 
		\item[(ii)] Otherwise, i.e., at least one of $t(x)$ and $t(y)$ is a speciation,  
						$\mu(x)\prec_N\mu(y)$.
	\end{itemize}
\end{description}
We call $\mu$ \rev{a} \emph{TreeNet-reconciliation map} from $(T;t,\sigma)$ to $N$. 
\end{defi}                

Property (R1) ensures that each leaf of $T$, i.e., each
gene in $\Gen$, is mapped to the species in which
it resides. Property (R2.i) implies the weaker property that $\mu(x)\in W^0$. 
Moreover, Property (R2.i) ensures that each speciation 
vertex $x$ of $T$ is mapped to a vertex in $N$ that separates
at least two of the images $\mu(x_i)$ and $\mu(x_j)$
of the children $x_1,\dots,x_k$\rev{, $k\geq 2$,}  of $x$. 
We emphasize that, even in the case 
$\mu(x_1) = \mu(x_2) = \cdots =\mu(x_k)$, 
 it is possible that 
$Q^2_N(\mu(x_1),\dots,\mu(x_k)) \neq \emptyset$, since $N$ may contain
distinct directed paths from $\mu(x)$ to $\mu(x_1)$ that  separate $\mu(x_1)$.
Property (R2.ii) ensures that each duplication vertex of $T$ is mapped to
an arc in $N$. Property (R3) implies that the ancestor relationships
in $T$ are preserved under $\mu$. Note, however, that  
two different duplication vertices might be mapped to the same arc of $N$
under $\mu$. 

We now show that  for the special case that both the gene tree and the species network are
binary, Definition \ref{def:mu} is a natural generalization of 
biTreeNet-reconciliation maps as defined in \cite{To2015}.
We begin by recalling the definition of this map (using our notation).
Note, in this definition the symbols
$\bul$ and $\squ$ still denote speciation and duplication events, respectively. 
For technical reasons, an additional symbol $\mathfrak{c}$ is used to annotate leaves.  

\begin{defi}[biTreeNet-reconciliation map \cite{To2015}] 
Let $\sigma\colon \Gen \to \Spe$ be a gene-species map.   
A \em{biTreeNet-reconciliation map} $\alpha=(\alpha_1,\alpha_{2})$ from a binary gene tree $T=(V,E)$ on $\Gen$ to a binary network $N=(W,F)$ on $\Spe$
is a pair of maps $\alpha_{1}\colon V\to W$ and $\alpha_{2}\colon V \to \{\bul,\squ,\mathfrak{c}\}$
that assigns to each vertex $u\in V$ a pair $(\alpha_{1}(u),\alpha_{2}(u))$ such that 
\begin{description}
	\item[(A1)]  $\alpha_{2}(u) = \mathfrak{c}$ if and only if $u\in \Gen$,  $\alpha_{1}(u)\in L(N)$, $\sigma(u)=\alpha_{1}(u)$, 
	\item[(A2)]  for every $u \in V^0$ with child vertices $u_1$ and $u_2$, 

							\hspace{0.3cm}	if $\alpha_{2}(u)=\bul$, then
							$\alpha_1(u) \in Q_N(\alpha_1(u_1),\alpha_1(u_2))$, and 
	\item[(A3)] \rev{for all} $u, v \in V$ such that $v\prec_T u$, 

						\hspace{0.3cm} if  $\alpha_{2}(u)=\squ$, then
							$\alpha_1(v)\preceq_N \alpha_1(u)$. Otherwise,  $\alpha_1(v)\prec_N \alpha_1(u)$.
\end{description}
\label{def:alpha-map}
\end{defi}

In Definition \ref{def:alpha-map}, $\alpha_{2}$ plays the role of our event-labeling
\rev{map}.
That is,  if $\alpha_{2}$ is given, then 
putting $t^{\alpha}(u)\coloneqq \alpha_2(u)$ for all $u\in V^0$ 
yields an event-labeled gene tree $(T;t^{\alpha}, \sigma)$ on $\Gen$. 

\begin{proposition}
  If there is a  biTreeNet-reconciliation map
	$\alpha$
  from a binary gene tree 
	$T$ on $\Gen$ to a binary network $N$ on $\Spe$, then there is 
	a TreeNet-reconciliation  map 
	from  $(T;t^{\alpha}, \sigma)$ to $N$.
	\label{prop:a-Implies-mu}
\end{proposition}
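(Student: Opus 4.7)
The plan is to construct a TreeNet-reconciliation map $\mu\colon V\to W\cup F$ directly from the pair $\alpha=(\alpha_1,\alpha_2)$ and verify axioms (R1)--(R3) of Definition~\ref{def:mu}. On leaves and on speciation vertices I would use the assignment suggested by (A1)--(A2): set $\mu(x):=\sigma(x)$ for $x\in\Gen$, and $\mu(x):=\alpha_1(x)$ whenever $t^{\alpha}(x)=\bul$. With this, (R1) is immediate from (A1). For (R2.i), the binary hypothesis gives exactly two children $x_1,x_2$ of any speciation vertex $x$; once duplications are assigned arcs whose heads equal $\alpha_1$ (see the next paragraph), the conventions $Q_N(\cdot,e)=Q_N(\cdot,h_N(e))$ and $Q_N(e,f)=Q_N(h_N(f),h_N(e))$ imply $Q_N(\mu(x_1),\mu(x_2))=Q_N(\alpha_1(x_1),\alpha_1(x_2))\subseteq Q^2_N(\mu(x_1),\mu(x_2))$, and (A2) together with $\mu(x)=\alpha_1(x)$ closes the check.

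The main obstacle is the definition of $\mu$ on duplication vertices: (R2.ii) demands $\mu(x)\in F$, so one must lift the vertex $\alpha_1(x)$ to one of its incoming arcs in a globally coherent way. Coherence is nontrivial because (i) $\alpha_1(x)$ may be a hybrid vertex with several in-arcs, and (ii) whenever duplications along a common directed $T$-path share the same image under $\alpha_1$, (R3.i) forces them to share the same arc under $\mu$. I would process the duplications of $T$ top-down. If $x$ is a duplication whose parent $y$ in $T$ either does not exist (so $x=\rho_T$) or satisfies $\alpha_1(y)\neq\alpha_1(x)$ (which is automatic when $y$ is a speciation, by the strictness in (A3)), then (A3) yields $\alpha_1(x)\prec_N \alpha_1(y)$ and hence a directed path in $N$ from $\alpha_1(y)$ to $\alpha_1(x)$; choose a vertex $u$ with $(u,\alpha_1(x))\in F$ lying on such a path (or any such $u$ in the root case) and set $\mu(x):=(u,\alpha_1(x))$. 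Otherwise $y$ is a duplication with $\alpha_1(y)=\alpha_1(x)$, and I would set $\mu(x):=\mu(y)$, so that each maximal chain of duplications of $T$ with constant $\alpha_1$-image is collectively mapped to a single arc.

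It remains to check (R3), which I would do by case analysis on the event labels of $x\prec_T y$. Every case in which at least one of $t(x),t(y)$ is a speciation reduces immediately to (A3) together with the vertex/arc comparisons $x\prec_N e \Leftrightarrow x\preceq_N h_N(e)$ and $e\prec_N y \Leftrightarrow t_N(e)\preceq_N y$, since the heads (and, where needed, tails) of the arcs chosen for duplications are controlled by the construction. The only nontrivial case is (R3.i), where both $x$ and $y$ are duplications. If $\alpha_1(x)=\alpha_1(y)$ then the top-down construction and the monotonicity of $\alpha_1$ along $T$-paths (via (A3)) force $x$ and $y$ to lie in the same constant-image chain, whence $\mu(x)=\mu(y)$. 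If instead $\alpha_1(x)\prec_N\alpha_1(y)$, let $y^*$ be the lowest $T$-ancestor of $x$ with $\alpha_1(y^*)\neq\alpha_1(x)$; by construction the tail $u$ of $\mu(x)$ lies on a directed path from $\alpha_1(y^*)$ to $\alpha_1(x)$, so $u\preceq_N\alpha_1(y^*)$, and since $y^*\preceq_T y$, (A3) gives $\alpha_1(y^*)\preceq_N\alpha_1(y)$; the required arc-comparison $\mu(x)\preceq_N\mu(y)$ then follows from the definition of $\preceq_N$ on arcs. The hardest step is thus not any individual axiom but the combinatorial bookkeeping needed to lift $\alpha_1$ to arcs coherently at hybrid vertices; once that top-down assignment is in place, (R1)--(R3) each follow by direct invocation of the corresponding biTreeNet axiom among (A1)--(A3).
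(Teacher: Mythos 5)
Your construction is correct and shares the paper's skeleton: keep $\mu=\alpha_1$ on leaves and speciation vertices, and lift each duplication vertex $x$ to an arc incident with $\alpha_1(x)$; the two proofs differ only in how that lift is carried out, which is however the entire technical content here. The paper's choice is purely local and needs no processing order: by the degree constraints in Definition~\ref{def:network}, a non-hybrid vertex $\alpha_1(u)$ has a unique incoming arc $e^{\alpha_1(u)}$ and a hybrid vertex has a unique outgoing arc $(\alpha_1(u),x)$, and the paper takes whichever of these is canonically determined. You instead always take an \emph{incoming} arc of $\alpha_1(x)$, resolving the ambiguity at hybrid vertices by a top-down pass that selects the last arc of a directed path descending from the image of the nearest $T$-ancestor with a different $\alpha_1$-image, and that collapses each constant-image duplication chain onto a single arc. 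Each route buys something. Your invariant ``the head of every duplication arc equals $\alpha_1$'' makes (R2.i) immediate, since then $Q_N(\mu(x_1),\mu(x_2))=Q_N(\alpha_1(x_1),\alpha_1(x_2))\ni\alpha_1(x)$ by (A2); the paper must instead argue that the separating paths extend by one arc when a child's image is a hybrid vertex (so that $\mu$ of that child is an out-arc lying strictly below $\alpha_1$ of the child). Conversely, the paper's map is defined pointwise with no bookkeeping, while your version genuinely needs the chain argument to secure (R3.i) when two comparable duplications share an $\alpha_1$-image at a hybrid vertex with several in-arcs, and needs the ``tail lies below $\alpha_1(y^*)$'' property to secure the remaining cases of (R3); your verification of these points is sound. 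One small caveat applies to both arguments equally: if $\alpha_1(x)=\rho_N$ for a duplication $x$, then $\rho_N$ has indegree $0$ by (N1) and has no incoming arc; this is an artifact of the mismatch between the root conventions of Definition~\ref{def:network} and of Definition~\ref{def:alpha-map} taken from \cite{To2015}, and the paper's own proof glosses over it in exactly the same way.
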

\begin{proof}
	In what follows, we show that there is a TreeNet-reconciliation map $\mu$
	from  $(T;t^{\alpha}, \sigma)$ to $N$, where $t^{\alpha}$
	is constructed as described above. 
	
	Let \rev{$\alpha=(\alpha_1,\alpha_2)$} be a biTreeNet-reconciliation
        map from $T=(V,E)$ to $N=(W,F)$. 
	We claim that $\mu\colon V\to W \cup F$ given by
	\begin{equation*}
	\mu(u) = \begin{cases}
	\alpha_{1}(u) &\mbox{ if } t^{\alpha}(u)\in \{\bul,\mathfrak{c}\}, \\
e^{\alpha_1(u)}
        \in F	&\mbox{ if } t^{\alpha}(u) = \squ \mbox{ and } \alpha_{1}(u) \mbox{ is not a hybrid vertex,} \\
	(\alpha_{1}(u),x) \in F	&\mbox{ otherwise, \rev{some $x\in W$}.} 
	\end{cases}
	\end{equation*}
	is a TreeNet-reconciliation map
	from  $(T;t^{\alpha}, \sigma)$ to $N$. Note, in the second condition $e^{\alpha_1(u)}$ is well-defined, 
	since $\alpha_{1}(u)$ must, in this case, be either  the root $\rho_N$
		or a tree vertex in $N$ and thus, there is only one incoming arc to $\alpha_{1}(u)$. 
	Moreover, in the last condition, the vertex $x$ is uniquely defined, since $\alpha_{1}(u)$ is then 
	a hybrid vertex and thus, there is only arc $(\alpha_{1}(u),x)$ in $N$.

	Clearly, Property~(A1) and the construction of $\mu$ immediately implies Property~(R1).
	
	We show now that \rev{Property~}(R3) is satisfied. 
	Let $u, v \in V$ such that $v\prec_T u$. 
	First assume that $t^{\alpha}(u)=\squ$. 
	There are now two mutually exclusive cases: either $\alpha_1(u)$ is 
	not a hybrid vertex and $\mu(u)$ is the arc $e^{\alpha_1(u)}$
	or $\alpha_1(u)$ is a
	hybrid vertex and $\mu(u)$ is the arc $(\alpha_{1}(u),x)$\rev{, some $x\in W$}. 
	If $t^{\alpha}(v)=\squ$, then, in both cases, Property (A3) and a straight-forward
        case analysis implies that  $\mu(v)\preceq_N \mu(u)$. 
	If $t^{\alpha}(v)=\bul$, then Property (A2) implies 
	that $\mu(v)$ must be a tree vertex. By construction, if $\alpha_1(v) = \alpha_1(u)$,
	then $\mu(u)$ is mapped to the unique arc $e^{\mu(v)}$
        and we obtain $\mu(v)\prec_N \mu(u)$. 
	In particular, $\alpha_1(v) \preceq_N \alpha_1(u)$ implies that 
	the arc $e^{\mu(v)}$
        is the ``lowest'' possible choice for $\mu(u)$ and thus, 
	we always have $\mu(v)\prec_N \mu(u)$, in case $t^{\alpha}(v)=\bul$. 
	
	Now assume that $t^{\alpha}(u)=\bul$.
	Then,  $\mu(u)  = \alpha_{1}(u)$. In particular, Property~(A1) implies $\mu(u)\in V^0$.
	Property~(A2) and the construction of $\mu$ implies 
	that $\mu(u)$ must be a tree vertex. 
	If $t^{\alpha}(v)=\bul$ or $v\in \Gen$, then $\alpha_1$ and $\mu$ coincide
	on $u$, resp., on $v$. Hence, Property~(A3) implies
	$\mu(v)\prec_N \mu(u)$. Thus, assume that $t^{\alpha}(v)=\squ$.
	The fact that	$\mu(u)$ is not a hybrid vertex together with 
	Property~(A3) and the construction of $\mu$ implies that 
	$\mu(v)= (x,y) \prec_N x \preceq_N \alpha_{1}(u) = \mu(v)$. 
	In summary, Property~(R3) is satisfied.  
	
	We continue by showing that Property~(R2) is satisfied. 
	By construction of $\mu$, Property (R2.ii) is satisfied. 
	For Property~(R2.i), let $u$ be a vertex in $T$ with $t^{\alpha}(u)=\bul$. Thus, 
	$\mu(u) = \alpha_{1}(u)$. 
	Since $T$ is binary, $u$ has exactly two children $u_1$ and $u_2$. 
	Property~(A2) implies  $\mu(u) = \alpha_{1}(u) \in Q_N(\alpha_1(u_1),\alpha_1(u_2))$. 	
	Thus, $\mu(u)$ must be a tree vertex. 
	In particular, there are directed paths $P_1$ and $P_2$ that start in $\mu(u)$
	to $\alpha_1(u_1)$ and $\alpha_1(u_2)$, respectively, where
	$P_1$ contains a child $z_1$ of $\mu(u)$ and $P_2$ a child $z_2$ of $\mu(u)$ that is distinct from $z_1$. 
	Property~(A3) implies $\alpha_1(u_1) \preceq_N z_1 \prec_N\alpha_1(u)$
	and $\alpha_1(u_2) \preceq_N z_2 \prec_N\alpha_1(u)$.
	The construction of $\mu$	implies that $\mu(u_1)$ is always 
	located on the path $P_2$ or the path $P_1x$ for some child $x$ of $\alpha_1(u_1)$ (if  $\alpha_1(u_1)$ is not a leaf) and 
	$\mu(u_2)$ is always 	located on the path $P_1$ or the path $P_2y$ for some child $y$ of $\alpha_1(u_2)$ (if  $\alpha_1(u_2)$ is not a leaf).
	Moreover, Property~(R3) implies 
	$\mu(u_1),\mu(u_2)\prec_N \mu(u)$. 
	Taken the latter two arguments together, we obtain $\mu(u) \in Q_N(\mu(u_1),\mu(u_2))$. 
	
	Thus, $\mu$ is a TreeNet-reconciliation  map \rev{from $(T;t^{\alpha}, \sigma)$ to $N$}.
\qed \end{proof}

	Note that Proposition~\ref{prop:a-Implies-mu} implies that the definition
of a TreeNet-reconciliation map applied to binary gene trees and binary
networks is a natural generalization of the concept of a
biTreeNet-reconciliation map.

\section{Reconciliation Maps to Trees}
\label{sec:rec-tree}

In this section, we consider the special case of reconciliation maps where the
network is a tree. As described in the introduction, the reconciliation of
gene trees (with or without
event-labels) with species trees has been the subject of numerous studies.
In \cite{Hellmuth2017} (tree) reconciliation maps between not necessarily binary
event-labeled gene trees and species trees were studied. In particular, a
\emph{tree reconciliation map} $\mu\colon V\to W\cup F$ from a given event-labeled gene tree
$(T=(V,E);t,\sigma)$ to a species tree $S = (W,F)$ is defined \rev{there}
which, in our
notation, is equivalent to replacing Property (R2.i) in Definition \ref{def:mu} by 
\begin{align*}
\textbf{(R2.i$^*$)} &  \text{ If } x\in V \text{ and } t(x)=\bul, \text{ then }  \mu(x) = \lca_S(\sigma(L_T(x))),
\end{align*}
and by adding the constraint 
\begin{align*}
\textbf{(R2.iii)} &  \text{ If } x\in V^0 \text{ and } t(x)=\bul, \text{ then } 
\mu(y_1) \text{ and } \mu(y_2) \text{ are incomparable in } \rev{S} \\
&		\text{ for any two distinct children } y_1 \text{  and  } y_2 \text{ of }x. 
\end{align*}

If a map $\mu$ from a given event-labeled gene tree $(T;t,\sigma)$ 
to a species tree $S$ satisfies
Properties (R1), (R2.i.$^*$), (R2.ii) and (R3) but not necessarily (R2.iii), then we shall call $\mu$ a 
\emph{relaxed tree reconciliation map} (thus, any tree reconciliation map is also relaxed). 
\rev{An example of a relaxed tree reconciliation map that is not a tree reconciliation map
is given in Fig.\ \ref{fig:rel-rec}.}
In what follows, we show for the case that the species network is in fact a
species tree that the concepts of a  
TreeNet-reconciliation map and a  relaxed tree reconciliation maps are equivalent
(Theorem~\ref{thm:tree2tree-reconc}). 

We first present some lemmas for later use.

\begin{lemma}
	\rev{Given any} phylogenetic tree $T=(V,E)$ and any two vertices $v,w\in V$ we have either 
	$Q_T(v,w)=\emptyset$ or	$Q_T(v,w)=\{\lca_T(v,w)\}$. 
	
	In particular, $Q_T(v,w)=\emptyset$ if and only if  $v$ and $w$ are comparable in $T$
\label{lem:QS}
\end{lemma}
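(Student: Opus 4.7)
The plan is to exploit the fact that in a (phylogenetic) tree, between any two comparable vertices there is a \emph{unique} directed path. Hence, for any common ancestor $z$ of $v$ and $w$ in $T$, there is a unique directed path $P(z,v)$ and a unique directed path $P(z,w)$, and these paths are separated by $z$ if and only if the first arc of $P(z,v)$ and the first arc of $P(z,w)$ leave $z$ via distinct children. In particular, $z \in Q_T(v,w)$ forces $z$ to be a $\preceq_T$-ancestor of both $v$ and $w$.

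I would first dispose of the comparable case. Assume without loss of generality that $v \preceq_T w$. Then any common ancestor $z$ of $v$ and $w$ satisfies $v \preceq_T w \preceq_T z$, so the unique path from $z$ to $v$ passes through $w$ and therefore shares its initial arc with the unique path from $z$ to $w$. Consequently no such $z$ can separate $v$ and $w$, giving $Q_T(v,w)=\emptyset$.

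Next I would treat the incomparable case. Let $\ell=\lca_T(v,w)$. Because $v$ and $w$ are incomparable, $\ell$ has two distinct children $c_1,c_2$ with $v \preceq_T c_1$ and $w \preceq_T c_2$; the unique paths from $\ell$ to $v$ and to $w$ then pass through $c_1$ and $c_2$ respectively, so $\ell \in Q_T(v,w)$. For uniqueness, let $z \in Q_T(v,w)$. As noted, $z$ is a common ancestor of $v$ and $w$, hence $\ell \preceq_T z$ by definition of $\lca$. If $\ell \prec_T z$, then the unique directed paths from $z$ to $v$ and from $z$ to $w$ both have to pass through $\ell$, hence through the same child of $z$, contradicting the separation condition. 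Therefore $z=\ell$, and $Q_T(v,w)=\{\ell\}$.

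The ``in particular'' statement then follows immediately by combining the two cases. The only mildly delicate step is the uniqueness argument in the incomparable case, but it reduces to the trivial observation that in a tree two paths from a common source that share a vertex must share the initial arc leading to that vertex; this is the real workhorse of the lemma.
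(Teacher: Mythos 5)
Your proof is correct and follows essentially the same route as the paper: a case split on whether $v$ and $w$ are comparable, using the uniqueness of directed paths in a tree to show no separating vertex exists in the comparable case and that $\lca_T(v,w)$ is the unique separating vertex otherwise. If anything, your uniqueness argument (any $z$ strictly above $\lca_T(v,w)$ routes both paths through the same child) is spelled out slightly more explicitly than in the paper, which simply asserts that the $\lca$ is the only separating vertex.
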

\begin{proof}
	Clearly, if $v=w$, then $Q_T(v,w)=\emptyset$, since $T$ is a tree. Thus, let $v\neq w$.
	Note, any two vertices $v,w\in V$ are either comparable in $T$ or not.

	Let us first assume that $v$ and $w$ are comparable.  W.l.o.g.\ let $v\succ_T w$. 
  Thus, there is a unique path from $\lca_T(v,w)=v$ to $w$ in $T$. 	
	Therefore, $v$ and $w$ cannot be separated by any vertex in $V$. Hence, 
	$Q_T(v,w)=\emptyset$. 

	If $v$ and $w$ are incomparable, then $\lca_T(v,w) \not\in\{ v,w\}$. Hence, 
	there is a unique directed path from $\lca_T(v,w)$ to $v$ and 
	a unique path from $\lca_T(v,w)$ to $w$ that have only the vertex 
	$\lca_T(v,w)$ in common. Thus, $\lca_T(v,w)\in Q_T(v,w)$. 
	Since $\lca_T(v,w)$ is the only vertex in $T$ that separates
	$v$ and $w$, we have $Q_T(v,w)=\{\lca_T(v,w)\}$.
\qed \end{proof}

\begin{lemma}
	Let $(T=(V,E);t,\sigma)$  be an event-labeled gene tree, let $S=(W,F)$ be a species tree  on $\Spe$, 
	and let $x\in V$ with $t(x)=\bul$ and children $x_1,\dots,x_k$, $k\geq 2$, in $T$. 
	Suppose $\mu$ is a TreeNet-reconciliation map from $(T;t,\sigma)$  to 
	$S$. Then, $\mu(x) \succeq_S q$	for all $q\in Q^2_S(\mu(x_1),\dots,\mu(x_k))$, i.e., 
	$\mu(x)$ is the $\succeq_S$-maximal element in $Q^2_S(\mu(x_1),\dots,\mu(x_k))$
	\label{lem:Q2S}
\end{lemma}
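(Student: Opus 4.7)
The plan is to fix an arbitrary $q\in Q^2_S(\mu(x_1),\dots,\mu(x_k))$ and show that $\mu(x)\succeq_S q$; since Property (R2.i) already guarantees $\mu(x)\in Q^2_S(\mu(x_1),\dots,\mu(x_k))$, this will immediately yield the claim that $\mu(x)$ is the (unique) $\succeq_S$-maximal element of $Q^2_S(\mu(x_1),\dots,\mu(x_k))$. By the definition of $Q^2_S$, there exist indices $1\le i\le j\le k$ with $q\in Q_S(\mu(x_i),\mu(x_j))$.

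Since $t(x)=\bul$, Property (R3.ii) gives $\mu(x_i)\prec_S\mu(x)$ and $\mu(x_j)\prec_S\mu(x)$. The main technical nuisance is that $\mu(x_i)$ (and similarly $\mu(x_j)$) may be either a vertex or an arc of $S$, so I would unify both cases by setting
\[y_\ell \coloneqq \begin{cases} \mu(x_\ell) & \text{if } \mu(x_\ell) \text{ is a vertex of } S, \\ h_S(\mu(x_\ell)) & \text{if } \mu(x_\ell)\in F, \end{cases}\]
for $\ell\in\{i,j\}$. By the definition of $Q_S$ on arcs via their heads, we then have $Q_S(\mu(x_i),\mu(x_j))=Q_S(y_i,y_j)$, so in particular $q\in Q_S(y_i,y_j)$. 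Unwinding the extended order from Section~\ref{sec:prelim} shows that in both cases $y_\ell\prec_S \mu(x)$: if $\mu(x_\ell)$ is a vertex this is immediate, and if $\mu(x_\ell)=(u,v)$ is an arc then $\mu(x_\ell)\prec_S\mu(x)$ means $u\preceq_S\mu(x)$, hence $y_\ell=v\prec_S u\preceq_S\mu(x)$.

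Now $Q_S(y_i,y_j)$ is non-empty (it contains $q$), so Lemma~\ref{lem:QS} applied to the tree $S$ and the vertices $y_i,y_j$ yields $Q_S(y_i,y_j)=\{\lca_S(y_i,y_j)\}$ and in particular $q=\lca_S(y_i,y_j)$. Since $\mu(x)$ is a common ancestor of $y_i$ and $y_j$ in $S$, the defining property of the least common ancestor gives $\mu(x)\succeq_S \lca_S(y_i,y_j)=q$, as required.

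The hard part is essentially bookkeeping: carefully handling the four cases where $\mu(x_i)$ and $\mu(x_j)$ are each a vertex or an arc, and checking that the extended $\preceq_S$-relation together with the convention $Q_S(e,f)=Q_S(h_S(f),h_S(e))$ line up correctly so that Lemma~\ref{lem:QS} can be invoked uniformly. Once the pair $(y_i,y_j)$ is introduced, the argument reduces to the familiar statement that any common ancestor of two vertices in a tree dominates their least common ancestor.
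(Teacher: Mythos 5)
Your proof is correct, and it takes a more direct route than the paper's. The paper first proves that $Q^2_S(\mu(x_1),\dots,\mu(x_k))$ possesses a unique $\succeq_S$-maximal element $q^*$ (by showing the set is closed under taking $\lca_S$ of incomparable members, an argument that again leans on Lemma~\ref{lem:QS}), and only then identifies $q^*$ with $\mu(x)$ by contradiction: if $q^*\succ_S\mu(x)$, then $q^*=\lca_S(v_i,v_j)$ would force some $v_i$ to be incomparable with $\mu(x)$, contradicting Property~(R3). You instead bound an \emph{arbitrary} $q\in Q^2_S$ directly: by Lemma~\ref{lem:QS} any nonempty $Q_S(y_i,y_j)$ is the singleton $\{\lca_S(y_i,y_j)\}$, and Property~(R3.ii) makes $\mu(x)$ a common ancestor of all the $y_\ell$, so $\mu(x)\succeq_S\lca_S(y_i,y_j)=q$; combined with $\mu(x)\in Q^2_S$ from (R2.i) this yields maximality without any separate existence argument or contradiction. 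The two proofs use the same ingredients (Lemma~\ref{lem:QS}, Property~(R3), the vertex/arc normalisation $v_\ell$ versus your $y_\ell$), but yours is shorter and cleaner; the paper's version has the minor side benefit of exhibiting that $Q^2_S$ is upward directed in a tree, a fact it does not reuse elsewhere. Your handling of the arc cases via the extended order and the convention $Q_S(e,f)=Q_S(h_S(f),h_S(e))$ is exactly right, and the degenerate case $i=j$ is harmless since $Q_S(y,y)=\emptyset$ in a tree.
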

\begin{proof}
	For simplicity, for every $x_i\in V$
	let $v_i$ be either the vertex $\mu(x_i)$, if $\mu(x_i)\in W$
	or the vertex $h_S(e)$ if $\mu(x_i) = e\in F$, $1\leq i \leq k$.
  Moreover, put $Q^2_S \coloneqq Q^2_S(\mu(x_1),\dots,\mu(x_k))$.

 	We show first that there is always a vertex $q^* \in Q^2_S$
	with $q^*\succeq_S q$ for all $q\in Q^2_S$. 
	Property (R2.i) implies that $Q^2_S\neq \emptyset$. 
	If $|Q^2_S|=1$, then the lemma trivially holds. 
	Now, assume that there are distinct $q,q'\in Q^2_S$ that are incomparable.
	By Lemma \ref{lem:QS}, $q=\lca_S(v_i,v_j)$ and $q'=\lca_S(v_r,v_s)$
	where $v_i$ and $v_j$ as well as $v_r$ and $v_s$ are incomparable in $S$
	and $i,j,r,s\in \{1,\dots,k\}$. Since $S$ is a tree, we have 
	$\lca_S(q,q') = \lca_S(v_i,v_r)$. Thus, $\lca_S(q,q')$ separates
		$v_i$ and $v_r$ and therefore, $q'' \coloneqq \lca_S(q,q') \in Q^2_S$. 
	In other words, for any two  vertices  $q,q'\in Q^2_S$ 
	there is always a vertex $q''\in Q^2_S$ with  $q''\succeq_S q,q'$.
	The latter arguments imply that there is a unique $\succeq_S$-maximal element $q^*$ in $Q^2_S$
	that is an ancestor of all elements in $Q^2_S$, as required.

	Property (R2.i) implies that $\mu(x)\in Q^2_S$. 
	By Lemma \ref{lem:QS} and since $S$ is a tree, 
	there are two distinct children $x_r$ and $x_s$ of $x$ such that 
	$\mu(x)\in Q_S(\mu(x_r),\mu(x_s))$ and $\mu(x) = \lca_S(v_r,v_s)$. 
	It remains to show that $q^* = \mu(x)$.
  Assume for contradiction that $q^*\neq \mu(x)$ and thus, $q^*\succ_S \mu(x)$. 
  Again by Lemma \ref{lem:QS}, $q^* = \lca_S(v_i,v_j)$ for some $1\leq i < j \leq k$. 
	Since $q^*$ separates $v_i$ and $v_j$ and  $q^*\succ_S \mu(x)$  
	at least one of $v_i$ and $v_j$, say $v_i$, must be incomparable
	with $\mu(x)$. Since $x_i$ is a child of $x$ in $T$ we can apply 
	Property (R3) to conclude that $\mu(x)\succ_S \mu(x_i)\succeq_S v_i$. 
	Hence, $v_i$ and $\mu(x)$ must be comparable in $S$, a contradiction. 
\qed \end{proof}

\begin{lemma}
	Let  $(T=(V,E);t,\sigma)$ be an event-labeled gene tree
\rev{such that there 
  exists a relaxed tree reconciliation map $\mu$
	from $(T;t,\sigma)$  to some species tree $S$.}
	Let $x\in V$ with $t(x) = \bul$. Then, there exists two distinct children
	$x_i$ and $x_j$ of $x$ in $T$ that satisfy the following two properties, 
	\begin{itemize}
		\item[(i)] $\mu(x)=\lca_S(\mu(x_i),\mu(x_j))$, and
		\item[(ii)] $\mu(x_i)$ and $\mu(x_j)$  are incomparable in $S$. 
	\end{itemize}
	Furthermore, for any child 	$x_i$ of $x$ in $T$ there is a child $x_j$ of $x$ in $T$ such that 
	\begin{itemize}
		\item[(iii)] 								$\sigma(L_T(x_i))\cap \sigma(L_T(x_j)) = \emptyset$.
	\end{itemize}	
\label{lem:tree2tree-reconc}
\end{lemma}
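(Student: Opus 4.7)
The plan is to exploit that $(R2.i^*)$ gives $\mu(x)=\lca_S(\sigma(L_T(x)))$ while $(R3)$ forces strict descent at the speciation vertex $x$, and then to track how the images of the children of $x$ sit in the subtrees below $w\coloneqq\mu(x)$ in $S$. For each child $x_i$ of $x$ I will write $v_i\coloneqq\mu(x_i)$ when $\mu(x_i)$ is a vertex of $S$ and $v_i\coloneqq h_S(\mu(x_i))$ when $\mu(x_i)$ is an arc, so that $v_i$ is always a vertex of $S$. As a warm-up I will check three basic facts: because $t(x)=\bul$, property (R3) gives $\mu(x_i)\prec_S w$ and therefore $v_i\prec_S w$ for every $i$; applying (R1) and (R3) to each gene $y\in L_T(x_i)$ yields $\sigma(y)\preceq_S v_i$, so $\sigma(L_T(x_i))\subseteq L_S(v_i)$; and hence $\ell_i\coloneqq\lca_S(\sigma(L_T(x_i)))\preceq_S v_i\prec_S w$.

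For parts (i) and (ii) I will use the identity
\[w \;=\; \lca_S(\sigma(L_T(x))) \;=\; \lca_S(\ell_1,\dots,\ell_k),\]
which follows from $L_T(x)=\bigcup_i L_T(x_i)$ together with the fact that $\lca$ commutes with partitioning in a tree. Since $w$ is the least common ancestor of the $\ell_i$, these vertices cannot all lie inside the subtree rooted at a single child of $w$, so there must be indices $i\neq j$ and distinct children $c_i,c_j$ of $w$ with $\ell_i\preceq_S c_i$ and $\ell_j\preceq_S c_j$. Because $v_i$ lies on the unique path from $\ell_i$ up to $w$ (as $\ell_i\preceq_S v_i\prec_S w$), this forces $v_i\preceq_S c_i$, and analogously $v_j\preceq_S c_j$. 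Since $c_i\neq c_j$ are children of $w$ in the tree $S$, the vertices $v_i,v_j$ are incomparable with $\lca_S(v_i,v_j)=w$, which translates via the extension of $\preceq_S$ to arcs to the statement that $\mu(x_i)$ and $\mu(x_j)$ are incomparable and $\lca_S(\mu(x_i),\mu(x_j))=\mu(x)$, giving (i) and (ii).

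For (iii), fix an arbitrary child $x_i$ of $x$ and let $c_i$ be the unique child of $w$ with $v_i\preceq_S c_i$. The equality $w=\lca_S(\sigma(L_T(x)))$ implies that $\sigma(L_T(x))$ cannot be contained in $L_S(c_i)$, so some gene $g\in L_T(x)$ satisfies $\sigma(g)\notin L_S(c_i)$. Let $x_j$ be the unique child of $x$ with $g\in L_T(x_j)$; then $x_j\neq x_i$, and by the same path argument $v_j\preceq_S c_j$ for some child $c_j$ of $w$ distinct from $c_i$, whence $\sigma(L_T(x_j))\subseteq L_S(v_j)\subseteq L_S(c_j)$. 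Since the leaf sets of subtrees rooted at distinct children of $w$ are disjoint in $S$, we conclude $\sigma(L_T(x_i))\cap \sigma(L_T(x_j))\subseteq L_S(c_i)\cap L_S(c_j)=\emptyset$. The main obstacle I anticipate, beyond bookkeeping, is the case split induced by $\mu(x_i)$ possibly being an arc rather than a vertex; my remedy is the uniform substitution $v_i=h_S(\mu(x_i))$ together with a careful appeal to the definitions of $\prec_S$ between vertices and arcs from Section~\ref{sec:prelim}.
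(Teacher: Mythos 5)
Your proof is correct, and it is worth comparing the two routes. For parts (i) and (ii) your argument is essentially the paper's in different clothing: the paper writes $\mu(x)=\lca_S(z_1,z_2)$ for two leaves $z_1,z_2\in\sigma(L_T(x))$ and traces which children of $x$ they come from, whereas you work with the per-child least common ancestors $\ell_i$ and the children of $w=\mu(x)$; both hinge on the same fact that $\lca_S(\sigma(L_T(x)))$ cannot be realized inside a single child-subtree of $w$. For part (iii), however, your argument is genuinely different: the paper proceeds by contradiction, showing that if $\sigma(L_T(x_1))$ met every $\sigma(L_T(x_j))$ then the whole of $\sigma(L_T(x))$ would be forced below the head of the first arc $e$ on the path from $\mu(x)$ to $\mu(x_1)$ via an iterative chaining argument, contradicting (R2.i$^*$); you instead give a direct argument from the disjointness of the leaf sets $L_S(c_i)$ over the children $c_i$ of $w$, which is shorter and avoids the iteration while reusing the machinery you already set up for (i) and (ii). One small point to make explicit: you establish $\lca_S(v_i,v_j)=w$ for the \emph{heads} $v_i=h_S(\mu(x_i))$, but the paper's convention defines $\lca_S$ of arcs via their \emph{tails}; this is harmless because each tail is either equal to $w$ or also lies below the corresponding child $c_i$, so $\lca_S(\mu(x_i),\mu(x_j))=w$ and the incomparability of $\mu(x_i)$ and $\mu(x_j)$ both follow, but that one-line check should be written out rather than left to ``a careful appeal to the definitions.''
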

\begin{proof}
  Let $x\in V$ with $t(x) = \bul$ and children $x_1,\dots,x_k$, $k\geq 2$, in $T$
  \rev{and put $S=(W,F)$}. 

	We combine the proof of Statements~(i) and (ii). 
	Property (R2.i$^*$) implies that $\mu(x) = \lca_S(\sigma(L_T(x)))$. 
	Clearly,  $\mu(x)$ can be expressed as $\mu(x) = \lca_S(z_1,z_2)$
	for some leaves $z_1,z_2\in \sigma(L_T(x))$. 
	Since $\sigma(L_T(x)) = \bigcup_{i=1}^k\sigma(L_T(x_i))$, 
	there exist children of $x$ in $T$, say w.l.o.g.\ $x_1$ and $x_2$, 
	with $z_1\in \sigma(L_T(x_1))$ and $z_2\in \sigma(L_T(x_2))$. 
	Note that $x_1$ and $x_2$ must be distinct since  $x_1=x_2$ and Property~(R3) would imply that 
	$\mu(x)\succ_S\mu(x_1)\succeq_S \lca_S(z_1,z_2)$ and 
	thus, $\mu(x)\neq \lca_S(z_1,z_2)$; a contradiction. 
	The latter together with 
	$\mu(x) = \lca_S(z_1,z_2)$;  $\mu(x) \succ_S \mu(x_1)\succeq_S z_1$;
	and  $\mu(x) \succ_S \mu(x_2)\succeq_S z_2$
	implies that $\mu(x_1)$ and $\mu(x_2)$ 	must be incomparable 
	in $S$ and $\lca_S(\mu(x_1), \mu(x_2)) = \mu(x)$. This establishes 
	Statements~(i) and (ii).

	It remains to show that Statement~(iii) holds. Assume for contradiction
	that there is a child of $x$, say $x_1$, such that 
	$\sigma(L_T(x_1))\cap \sigma(L_T(x_j)) \neq \emptyset$, $2\leq j\leq k$.
	By Property (R3), we have $\mu(x)\succ_S\mu(x_1)$.
	
	Let $e$ be the first arc on the (unique) directed path from $\mu(x)$ to $\mu(x_1)$
  in $S$.   Since $\mu(x)\succ_S\mu(x_1)$ and 
  Property (R2) \rev{holds}, we have 
	$e\succeq_S \mu(x_1)$. By Property (R3), we have $ e\succeq_S\mu(x_1)\succeq_S z$
	for all $z\in   \sigma(L_T(x_1))$, and therefore, 
	$y:=h_S(e)\succeq_S z$ for all $z\in   \sigma(L_T(x_1))$.
	In other words, any path from $\mu(x)$ to $z$ must contain
	the arc $e$ for all $z\in   \sigma(L_T(x_1))$

	By assumption, there is a leaf $z \in \sigma(L_T(x_1))\cap \sigma(L_T(x_2))$.
	By Property (R3) and the latter arguments, $\mu(x)\succ_S e\succeq_S\mu(x_2)\succeq_S z$ 
	Again, Property~(R3) implies that  $\mu(x_2)\succeq_S z'$
	for all $z'\in   \sigma(L_T(x_2))$, and therefore, 
	$y\succeq_S z'$ for all $z'\in   \sigma(L_T(x_2))$.
	Hence, any path from $\mu(x)$ to $z'$ must contain
	the arc $e$ for all $z'\in   \sigma(L_T(x_2))$.
	Repeating the latter arguments shows that  
	any path from $\mu(x)$ to $z'$ must contain
	the arc $e$ for all $z'\in   \sigma(L_T(x_i))$ and  $i\in\{1,\dots,k\}$. 
	Thus, $y\succeq_S z'$ for all $z'\in \bigcup_{i=1}^k \sigma(L_T(x_i)) = \sigma(L_T(x))$. 
	Hence, $\mu(x) \succ_S y \succeq_S \lca_S(\sigma(L_T(x)))$;
	a contradiction to Property~(R2.i$^*$). 
\qed \end{proof}

We are now in the position to prove the \rev{aforementioned} equivalence between 
TreeNet-reconciliation and relaxed tree reconciliation maps.

\begin{theorem}
  Let  $(T=(V,E);t,\sigma)$ be an event-labeled gene tree,
  \rev{$S=(W,F)$ be a species tree, and $\mu:V\to W\cup F$ be a map.}
  Then, $\mu$ is a relaxed tree reconciliation map from $(T;t,\sigma)$  to
  $S$
	if and only if $\mu$ is a TreeNet-reconciliation map
	from $(T;t,\sigma)$  to
        $S$.
	\label{thm:tree2tree-reconc}
\end{theorem}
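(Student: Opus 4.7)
The proof reduces to showing that (R2.i$^*$) and (R2.i) are equivalent in the presence of the common axioms (R1), (R2.ii), and (R3), since both notions of reconciliation share these axioms (and a relaxed tree reconciliation map is, by definition, not required to satisfy (R2.iii)). I therefore fix a speciation vertex $x\in V^0$ with children $x_1,\dots,x_k$ and argue both implications at $x$.

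For the direction ``relaxed tree reconciliation implies TreeNet-reconciliation'', I invoke Lemma~\ref{lem:tree2tree-reconc}(i)--(ii) to obtain distinct children $x_i$ and $x_j$ of $x$ with $\mu(x) = \lca_S(\mu(x_i),\mu(x_j))$ and with $\mu(x_i)$ and $\mu(x_j)$ incomparable in $S$. Writing $v_p = \mu(x_p)$ when $\mu(x_p)$ is a vertex and $v_p = h_S(\mu(x_p))$ when $\mu(x_p)$ is an arc, a short case analysis (using the extensions of $\preceq_S$ and $\lca_S$ to arcs) shows that $v_i$ and $v_j$ are incomparable as vertices of $S$ and that $\lca_S(v_i,v_j) = \mu(x)$. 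Lemma~\ref{lem:QS} then yields $Q_S(v_i,v_j) = \{\mu(x)\}$, so by the definition of $Q_S$ for arcs we have $\mu(x) \in Q_S(\mu(x_i),\mu(x_j)) \subseteq Q^2_S(\mu(x_1),\dots,\mu(x_k))$, as required by (R2.i).

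For the converse, I assume (R2.i) holds and must verify (R2.i$^*$), i.e., that $\mu(x) = \lca_S(\sigma(L_T(x)))$. Lemma~\ref{lem:Q2S} together with Lemma~\ref{lem:QS} ensures that $\mu(x) = \lca_S(v_i,v_j)$ for some incomparable pair $v_i,v_j$ as above (in particular $\mu(x)$ is a vertex of $S$, since $Q^2_S$ consists of vertices). Applying (R3) directly to any leaf $g\in L_T(x)$ (where $t(x)=\bul\neq\squ$ forces the strict case) yields $\sigma(g) = \mu(g) \prec_S \mu(x)$, so $\mu(x)$ is a common ancestor of $\sigma(L_T(x))$ and hence $\mu(x) \succeq_S \lca_S(\sigma(L_T(x)))$. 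For the reverse inequality, I choose leaves $g_i\in L_T(x_i)$ and $g_j\in L_T(x_j)$. A short case analysis, depending on whether each of $\mu(x_i),\mu(x_j)$ is a vertex or an arc, gives $\sigma(g_i)\preceq_S v_i$ and $\sigma(g_j)\preceq_S v_j$. Since $v_i$ and $v_j$ are incomparable and lie strictly below $\mu(x)$ in two different subtrees rooted at distinct children of $\mu(x)$, a standard tree-geometric argument forces $\lca_S(\sigma(g_i),\sigma(g_j)) = \mu(x)$, whence $\lca_S(\sigma(L_T(x))) \succeq_S \mu(x)$, giving equality.

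The main obstacle throughout is the bookkeeping arising from the possibility that $\mu(x_p)$ is an arc rather than a vertex, combined with the fact that the paper's extensions of $\lca_S$ and of $Q_S$ to arcs use different endpoints of an arc (tails for $\lca_S$, heads for $Q_S$). The bulk of the work lies in checking that arc-incomparability of $\mu(x_i),\mu(x_j)$ translates into vertex-incomparability of the corresponding heads $v_i,v_j$, and that $\lca_S$ on tails and on heads yields the same vertex $\mu(x)$ in our setting; once this translation is established, the remaining steps are immediate applications of Lemmas~\ref{lem:QS}, \ref{lem:Q2S} and~\ref{lem:tree2tree-reconc}.
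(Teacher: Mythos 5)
Your proof is correct, and the first direction (relaxed tree reconciliation $\Rightarrow$ TreeNet-reconciliation) is essentially the paper's own argument: invoke Lemma~\ref{lem:tree2tree-reconc}(i)--(ii) and then Lemma~\ref{lem:QS}. For the converse, however, you take a genuinely different and shorter route. The paper first establishes the auxiliary identity $\lca_S(v,w)=\lca_S(u,v,w)$ for $u\preceq_S\lca_S(v,w)$ (its Equation~\eqref{eq:master}) and applies it repeatedly, together with Lemma~\ref{lem:Q2S}, to show successively that $\mu(x)=\lca_S(\mu(x_1),\dots,\mu(x_k))$, then $\mu(x)=\lca_S(v_1,\dots,v_k)$ with $v_i=\lca_S(\sigma(L_T(x_i)))$, and finally collapses this to $\lca_S(\sigma(L_T(x)))$ via $\lca(A\cup B)=\lca(\lca(A),\lca(B))$. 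You instead sandwich $\mu(x)$: Property~(R3) applied to all leaves below $x$ gives $\mu(x)\succeq_S\lca_S(\sigma(L_T(x)))$, while two witness leaves $g_i\preceq_T x_i$ and $g_j\preceq_T x_j$ lying under the two distinct children of $\mu(x)$ that separate $v_i$ and $v_j$ give $\lca_S(\sigma(g_i),\sigma(g_j))=\mu(x)$ and hence the reverse inequality. This avoids the iterated lca computation entirely; the paper's longer route does yield the (unused) extra fact that $\mu(x)$ equals the lca of \emph{all} the children's images. Your explicit attention to the head/tail mismatch between the arc-extensions of $\lca_S$ and $Q_S$ is also welcome --- the paper applies Lemma~\ref{lem:QS} directly to possibly-arc arguments without comment, whereas you verify that incomparability of arcs in a tree forces incomparability of their heads and that the two lca conventions agree here; that check goes through exactly as you sketch it.
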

\begin{proof}
  Note first that	all Properties (R1), (R2.ii) and (R3) in Definition~\ref{def:mu} 
	remain the same for both, TreeNet-reconciliation maps and 
	relaxed tree reconciliation maps. Hence, 
	it suffices to show that if $\mu$ is a TreeNet-reconciliation map
  then Property~(R2.i$^*$) holds and that if $\mu$ is a relaxed tree reconciliation
  then Property~(R2.i) \rev{holds}.

  Assume first that $\mu$  is a relaxed tree reconciliation map
  from $(T;t,\sigma)$ to $S$. Let $x\in V$ be a vertex with $t(x) = \bul$
  and children $x_1,\dots,x_k$\rev{, $k\geq 2$,} in $T$. Then
  Lemma~\ref{lem:tree2tree-reconc}(i) implies that 
	$\mu(x) =\lca_S(\mu(x_i), \mu(x_j))$, for some $i,j\in \{1,\dots,k\}$.
	By Lemma~\ref{lem:QS}, $Q_S(\mu(x_i),\mu(x_j)) = \{\mu(x)\}$ follows.
	Hence, $\mu(x)\in Q^2_S(\mu(x_1),\dots,\mu(x_k))$. 
	Thus, Property~(R2.i) holds.

	Now assume that $\mu$ is TreeNet-reconciliation map
	from $(T;t,\sigma)$  to $S$.
	In what follows we will make use of the following observation:
	If $y_1$ and $y_2$ are incomparable in $S$ and $y=\lca_S(y_1,y_2)$
        \rev{and $u,v,w\in W$ are such that}
        $y\succeq_S u$, $y_1\succeq_S v$ and $y_2\succeq_S w$, 
	then 
	\begin{equation}
		y = \lca_S(v,w) = \lca_S(u,v,w) = \lca_S(y_1,y_2,w). \label{eq:master}
	\end{equation}
	Property~(R2.i) and Lemma \ref{lem:QS} imply that
	$\mu(x) = \lca_S(\mu(x_i), \mu(x_j)) \in Q^2_S(\mu(x_1),\dots,\mu(x_k))$
	for some distinct $i,j\in\{1,\dots,k\}$. 
	W.l.o.g., let $\mu(x) = \lca_S(\mu(x_1), \mu(x_2)) \in Q^2_S(\mu(x_1),\dots,\mu(x_k))$. 
  Property~(R3) 	and 	Lemma \ref{lem:Q2S} imply that 
	$\mu(x)\succeq_S \lca_S(\mu(x_r), \mu(x_s))$	for all $r,s\in\{1,\dots,k\}$. 
	Repeated application of Equ.\ \eqref{eq:master} implies that 
	\begin{align*}
		\mu(x) = \lca_S(\mu(x_1), \mu(x_2), \lca_S(\mu(x_r), \mu(x_s))) 
					=\lca_S(\mu(x_1), \mu(x_2), \mu(x_r), \mu(x_s))
	\end{align*}
	for all $r,s\in\{1,\dots,k\}$ and, again by repeated application of Equ.\ \eqref{eq:master},  we obtain 
	\[
		\mu(x) = \lca_S(\mu(x_1),\dots, \mu(x_k)). 	
	\]

	Now, put	$v_i\coloneqq \lca_S(\sigma(L_T(x_i)))$, $1\leq i\leq k$. 
	Property~(R3) implies that  $\mu(x_r)\succeq_S z$ for all $z\in \sigma(L_T(x_r))$ and $1\leq r\leq k$. 
	Thus, $\mu(x_1) \succeq_S  v_1$ and 
	$\mu(x_2) \succeq_S v_2$. 	
  Since $\mu(x_1)$ and $\mu(x_2)$ are separated by $\mu(x)$, they 
	must be incomparable. Equ.\ \eqref{eq:master} implies that 
	\[
		\mu(x) = \lca_S(v_1,v_2). 
	\]
  Again, by   Property (R3), we have $\mu(x)\succ_S \mu(x_r)$ and 
	$\mu(x_r) \succeq_S \lca_S(\sigma(L_T(x_r)))$, $1\leq r\leq k$. 
	Hence, Equ.\ \eqref{eq:master} implies that 
	 \[\mu(x) = \lca_S(v_1,v_2,v_r) \text{ for all }  r\in\{1,\dots,k\}.\]
	 Repeated application of Equ.\ \eqref{eq:master} implies
	\begin{equation}
		\mu(x) = \lca_S(v_1,v_2,v_3,v_4,\dots, v_k). \label{eq:final}
	\end{equation}
	To make the final step, we first observe that 
	for non-empty vertex sets $A, B$ of a tree, $\lca(A\cup B)=\lca(\lca(A),\lca(B))$ holds.
  The latter and $\sigma(L_T(x)) = \bigcup_{i=1}^k\sigma(L_T(x_i))$ allows us to rewrite
	Equ.\ \eqref{eq:final} as	$\mu(x)  = \lca_S(\sigma(L_T(x)))$.
	Therefore, the map $\mu$ satisfies Property~(R2.i$^*$). 
\qed \end{proof}

 As an immediate consequence of Theorem~\ref{thm:tree2tree-reconc} we obtain

 \begin{corollary}
   \rev{ Suppose $(T(V,E);t,\sigma)$ is an event-labeled gene tree,  $S=(W,F)$
     is a species tree and $\mu:V\to W\cup F$ is a map. Then}
   $\mu$ is a  tree reconciliation map  from
   $(T;t,\sigma)$  to a
   $S$
	if and only if $\mu$ is a TreeNet-reconciliation map from $(T;t,\sigma)$  to $S$
	that additionally satisfies Property~(R2.iii). 
	\label{cor:mu1}
\end{corollary}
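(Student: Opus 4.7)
The plan is to read off the corollary as an immediate bookkeeping consequence of Theorem~\ref{thm:tree2tree-reconc}, using the fact that the definitions in this section are set up precisely so that a \emph{tree reconciliation map} is, by construction, a \emph{relaxed tree reconciliation map} that additionally satisfies Property~(R2.iii). Thus the task reduces to adding Property~(R2.iii) to both sides of the equivalence established in Theorem~\ref{thm:tree2tree-reconc}.

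For the forward implication, I would take $\mu$ to be a tree reconciliation map from $(T;t,\sigma)$ to $S$. By definition $\mu$ then satisfies (R1), (R2.i$^*$), (R2.ii), (R2.iii), and (R3), so in particular $\mu$ is a relaxed tree reconciliation map. Theorem~\ref{thm:tree2tree-reconc} then yields that $\mu$ is a TreeNet-reconciliation map from $(T;t,\sigma)$ to $S$, and since (R2.iii) was already assumed it continues to hold, giving exactly the conclusion.

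For the reverse implication, suppose $\mu$ is a TreeNet-reconciliation map from $(T;t,\sigma)$ to $S$ that additionally satisfies (R2.iii). Applying Theorem~\ref{thm:tree2tree-reconc} in the other direction shows that $\mu$ is a relaxed tree reconciliation map, i.e.\ satisfies (R1), (R2.i$^*$), (R2.ii), and (R3). Combining this with the assumed (R2.iii) gives all the properties required by the definition of a tree reconciliation map.

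There is no real obstacle: the work has already been done in Theorem~\ref{thm:tree2tree-reconc}, and Property~(R2.iii) is simply carried through unchanged in both directions. The only subtlety worth flagging explicitly in the write-up is that (R2.iii) appears verbatim in the definition of a tree reconciliation map and is independent of the other axioms, so one really can append it to either side of the equivalence without disturbing the argument.
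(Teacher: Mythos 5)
Your argument is correct and is exactly the paper's own reasoning: the paper states Corollary~\ref{cor:mu1} as an immediate consequence of Theorem~\ref{thm:tree2tree-reconc}, precisely because a tree reconciliation map is by definition a relaxed tree reconciliation map satisfying (R2.iii), so the extra property can be carried through both directions of the equivalence unchanged.
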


As a consequence of Lemma \ref{lem:tree2tree-reconc}(ii), 
if $\mu$ is a relaxed tree reconciliation map between a binary 
gene tree and a (not necessarily binary) species tree, 
then Property (R2.iii) is always satisfied.
Thus any  relaxed tree reconciliation map between a binary 
gene tree and a species tree is also a tree reconciliation map. 
Therefore, Theorem \ref{thm:tree2tree-reconc} implies

\begin{corollary}
  Suppose $(T;t,\sigma)$ is a \emph{binary} \rev{event-labeled}
  gene tree, $S=\rev{(W,F)}$ \rev{is}
	a species tree \rev{ and $\mu:V\to W\cup F$ is a map}. 
	Then, $\mu$ is a  tree reconciliation map from  $(T;t,\sigma)$  to  $S$
	if and only if $\mu$ is a TreeNet-reconciliation map from $(T;t,\sigma)$  to $S$.
\label{cor:bin}
\end{corollary}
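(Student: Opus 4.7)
The plan is to deduce Corollary \ref{cor:bin} by combining Theorem \ref{thm:tree2tree-reconc} and Corollary \ref{cor:mu1}, and then showing that in the binary case the extra condition (R2.iii) required by Corollary \ref{cor:mu1} comes for free from Lemma \ref{lem:tree2tree-reconc}(ii).

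More concretely, I would first observe that by Corollary \ref{cor:mu1}, $\mu$ is a tree reconciliation map from $(T;t,\sigma)$ to $S$ if and only if $\mu$ is a TreeNet-reconciliation map satisfying Property (R2.iii). Hence the ``only if'' direction of the corollary is immediate, and for the ``if'' direction it suffices to check that whenever $\mu$ is a TreeNet-reconciliation map from a \emph{binary} event-labeled gene tree $(T;t,\sigma)$ to a species tree $S$, Property (R2.iii) is automatically satisfied.

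To verify this, let $x\in V^0$ be any speciation vertex of $T$. Since $T$ is binary, $x$ has exactly two children, say $x_1$ and $x_2$. By Theorem \ref{thm:tree2tree-reconc}, $\mu$ is a relaxed tree reconciliation map from $(T;t,\sigma)$ to $S$, so Lemma \ref{lem:tree2tree-reconc}(ii) applies: there exist two \emph{distinct} children $x_i$ and $x_j$ of $x$ such that $\mu(x_i)$ and $\mu(x_j)$ are incomparable in $S$. Since $x$ has only the two children $x_1$ and $x_2$, necessarily $\{x_i,x_j\}=\{x_1,x_2\}$, so $\mu(x_1)$ and $\mu(x_2)$ are incomparable in $S$. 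This is exactly Property (R2.iii) for the vertex $x$, and as $x$ was an arbitrary speciation vertex, (R2.iii) holds throughout $T$.

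I do not expect any real obstacle here, as the statement is essentially a bookkeeping consequence of the earlier results: the key fact is simply that ``at least one pair of children of a speciation vertex has incomparable $\mu$-images'' (Lemma \ref{lem:tree2tree-reconc}(ii)) collapses to ``\emph{every} pair of children of a speciation vertex has incomparable $\mu$-images'' once the gene tree is binary, so the distinction between relaxed tree reconciliation maps and tree reconciliation maps disappears. The only mild care needed is to invoke Theorem \ref{thm:tree2tree-reconc} and Corollary \ref{cor:mu1} in the correct direction so that both implications of the biconditional are covered by the same argument.
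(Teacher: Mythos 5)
Your proposal is correct and follows essentially the same route as the paper: the paper likewise observes that Lemma~\ref{lem:tree2tree-reconc}(ii) forces Property~(R2.iii) when the gene tree is binary (so relaxed tree reconciliation maps coincide with tree reconciliation maps there) and then invokes Theorem~\ref{thm:tree2tree-reconc}. Your explicit detour through Corollary~\ref{cor:mu1} is only a cosmetic repackaging of the same two ingredients.
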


\section{Informative Triples}
\label{sec:triples}

It is of interest to understand when a species tree or species network 
exists for a given gene tree  $(T;t,\sigma)$, and
if it does exist, which constraints (if any) such an network imposes on 
$(T;t,\sigma)$ for different kinds of networks. In this section,
we shall consider constraints that are given in terms of triples. 
We start with briefly recalling the terminology surrounding them.

A \emph{triple $ab|c$}  is a binary reduced tree $T$ on three leaves $a,b$ and $c$ such that
the path from $a$ to $b$ does not intersect the path from $c$ to the root $\rho_T$.
A network $N$ on $X$ \emph{displays} a triple $ab|c$, if $a,b,c\in X$ and $ab|c$ 
can be obtained from
$N$ by  deleting  arcs  and  vertices,  and  suppressing vertices with in- and outdegree one.
Note, that no distinction is made between
$ab|c$ and $ba|c$. 
We write $R(N)$ for the set of all triples that are displayed by the network $N$. 
A set $R$ of triples is  \emph{compatible} if there is a phylogenetic tree $T$ on $X= \bigcup_{t\in R} L(t)$ such that
$R\subseteq R(T)$.

\begin{figure}[tbp]
  \begin{center}
    \includegraphics[width=.9\textwidth]{./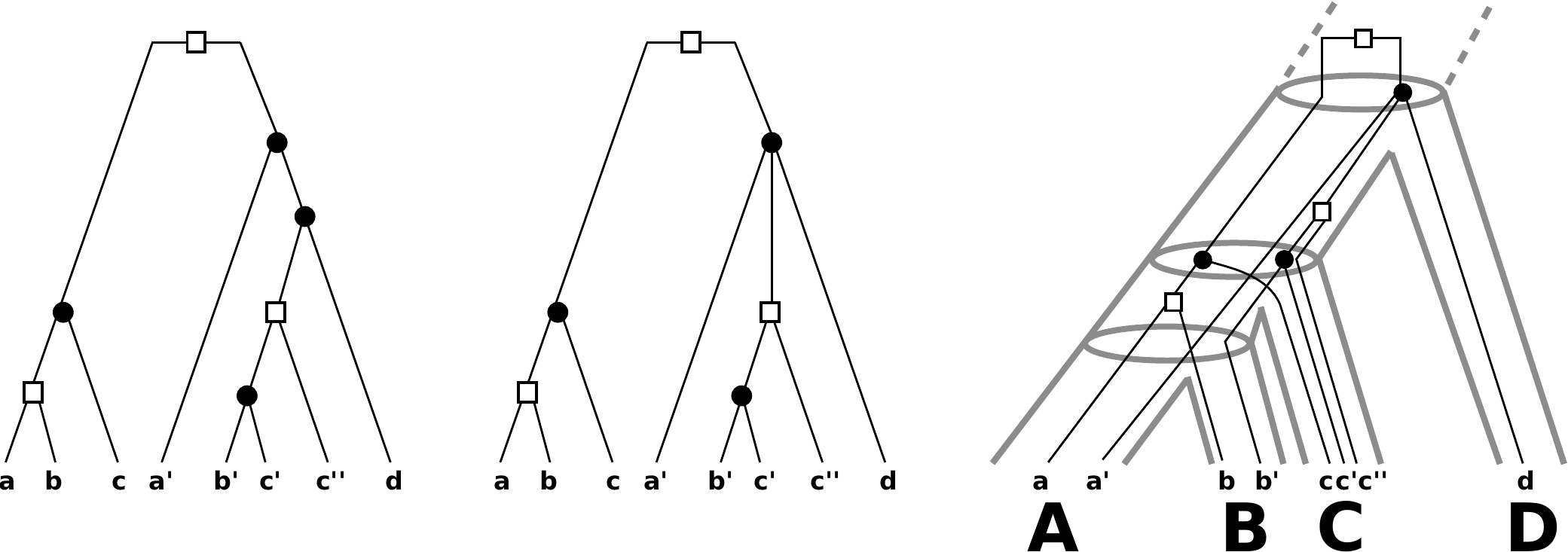}
  \end{center}
	\caption{		(\emph{Adapted from \cite[Fig.\ 3]{Hellmuth2017}})
	Two event-labeled gene trees  $(T';t',\sigma)$ (left) and  $(T;t,\sigma)$ (middle) on a 
	set $\Gen = \{a,a',b,b',c,c',c'',d\}$. 	\rev{Speciation and duplication events 
	are represented as $\bullet$ and $\square$, respectively. } 
The relaxed tree reconciliation map from $(T;t,\sigma)$
	to the species tree $S$  on $\Spe=\{A,B,C,D\}$  is 
	implicitly shown	by drawing the gene tree inside the tube-like species tree $S$ (right).
	 Here, 	$\sigma$  maps each gene in $\Gen$ to the species 
	(capitals below the genes) $A,B,C,D\in \Spe$. 
	Although there is a relaxed tree reconciliation map from the non-binary gene tree $(T;t,\sigma)$ to $S$, 
	there exists no tree reconciliation map from $(T;t,\sigma)$ to any species tree, 
  since $AB|C, BC|A\in \mc{S}(T;t,\sigma)$ implies that $\mc{S}(T;t,\sigma)$ is incompatible (cf.\ Thm.\ \ref{thm:speciestriples2}). 
	Moreover, there exists neither a (relaxed) tree reconciliation map nor a TreeNet-reconciliation map
  from the binary gene tree $(T';t',\sigma)$ to any species tree, since the set $\mc{S}(T';t',\sigma)$ contains the 
 	triples $AB|C$ and $BC|A$ and is, therefore, incompatible (cf.\ Thm.\ \ref{thm:speciestriples}).
        } 
	\label{fig:rel-rec}
\end{figure}

Interestingly, the existence of tree reconciliation maps from event-labeled gene trees $(T;t,\sigma)$
to some species tree can be characterized in terms of underlying ``informative species triples'' 
displayed by the gene tree,  \cite{Hellmuth2017,HHH+12},  that is, 
in terms of the set
	\begin{align*}
	\mathcal{S}(T;t,\sigma) \coloneqq \{  \sigma(a) \sigma(b) | \sigma (c) \colon &
								ab|c \in R(T), t(\lca_T(a,b,c))=\bul, \text{ and } \\ & \sigma(a), \sigma(b), \sigma (c) \text{ are pairwise distinct} \}  
	\end{align*}

In particular, in \cite[Theorem 8]{HHH+12} the existence of relaxed
tree reconciliation maps for
binary \rev{event-labeled} gene trees \rev{$(T;t,\sigma)$} is characterized in terms of the 
compatibility of $\mathcal{S}(T;t,\sigma)$  
under the following assumption:
For any vertex $x$ \rev{of $T$} with $t(x) = \bul$
and children
$x_1$ and $x_2$ we have  $\sigma(L_T(x_1))\cap \sigma(L_T(x_2)) = \emptyset$. 
Lemma~\ref{lem:tree2tree-reconc}(iii) implies that this condition
is always satisfied for relaxed tree reconciliation maps and, therefore, also
by  tree reconciliation maps between such trees. The latter argument combined with Corollary~\ref{cor:bin} 
and Theorem~\ref{thm:tree2tree-reconc} therefore immediately implies the following 	
mild generalization of the aforementioned characterization from \cite{HHH+12}:

\begin{theorem}
  Let  $(T=(V,E);t,\sigma)$ be an event-labeled binary gene tree
  \rev{and let $S$ be a species tree}. 
  Then, there exists a relaxed tree reconciliation map $\mu$
  \rev{from} $(T;t,\sigma)$
  \rev{to} $S$
	(or equivalently a TreeNet-reconciliation map $\mu$ from $(T;t,\sigma)$ to $S$)
	if and only if $\mathcal{S}(T;t,\sigma)$ is compatible and,
  for any
	$x\in V$ with $t(x) = \bul$ with children
	$x_1$ and $x_2$, we have $\sigma(L_T(x_1))\cap \sigma(L_T(x_2)) = \emptyset$. 
	\label{thm:speciestriples}
\end{theorem}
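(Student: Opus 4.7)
My plan is to reduce the statement to the existing characterization of \cite{HHH+12}, Theorem~8, using the infrastructure already established in the paper. Recall that \cite{HHH+12} showed, for binary event-labeled gene trees $(T;t,\sigma)$ satisfying the extra assumption that $\sigma(L_T(x_1))\cap\sigma(L_T(x_2))=\emptyset$ at every speciation vertex $x$ with children $x_1,x_2$, that a relaxed tree reconciliation map to \emph{some} species tree exists if and only if $\mathcal{S}(T;t,\sigma)$ is compatible. The goal of the present statement is to absorb the disjointness assumption into the characterization itself, and to simultaneously upgrade "relaxed tree reconciliation map" to "TreeNet-reconciliation map". Both steps are essentially formal consequences of earlier results.

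For the $(\Rightarrow)$ direction, suppose a relaxed tree reconciliation map $\mu$ from $(T;t,\sigma)$ to $S$ exists. Since $T$ is binary, every speciation vertex $x$ has exactly two children $x_1,x_2$. Lemma~\ref{lem:tree2tree-reconc}(iii), applied to $x_1$, produces a sibling child $x_j$ with $\sigma(L_T(x_1))\cap\sigma(L_T(x_j))=\emptyset$; since $x_2$ is the only other child, $j=2$, giving the desired disjointness condition. Now that the hypothesis of \cite[Theorem~8]{HHH+12} is verified, its forward direction yields compatibility of $\mathcal{S}(T;t,\sigma)$.

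For the $(\Leftarrow)$ direction, assuming compatibility of $\mathcal{S}(T;t,\sigma)$ together with the disjointness condition at every speciation vertex, the hypothesis of \cite[Theorem~8]{HHH+12} is again satisfied, and its backward direction directly produces a relaxed tree reconciliation map from $(T;t,\sigma)$ to some species tree $S$.

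Finally, the parenthetical equivalence between relaxed tree reconciliation maps and TreeNet-reconciliation maps in the binary setting follows from the observations stated immediately before the theorem: by Lemma~\ref{lem:tree2tree-reconc}(ii) every relaxed tree reconciliation map from a binary gene tree is in fact a (non-relaxed) tree reconciliation map, and Corollary~\ref{cor:bin} then identifies tree reconciliation maps with TreeNet-reconciliation maps in the binary case. I do not expect any real obstacle here: the content lies entirely in \cite{HHH+12} Theorem~8 and in Lemma~\ref{lem:tree2tree-reconc}; the only thing to check carefully is that "binary" lets us pin down the second child $x_2$ in the application of Lemma~\ref{lem:tree2tree-reconc}(iii), so that the disjointness condition of the present theorem matches the assumption in \cite{HHH+12}.
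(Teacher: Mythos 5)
Your proposal is correct and follows essentially the same route as the paper: the authors likewise derive the theorem immediately from \cite[Theorem~8]{HHH+12} together with Lemma~\ref{lem:tree2tree-reconc}(iii) (to show the disjointness condition is automatic for relaxed tree reconciliation maps of binary gene trees) and Corollary~\ref{cor:bin}/Theorem~\ref{thm:tree2tree-reconc} (for the equivalence with TreeNet-reconciliation maps). Your care in noting that the sibling produced by Lemma~\ref{lem:tree2tree-reconc}(iii) must be $x_2$ in the binary case is exactly the point the paper relies on implicitly.
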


It is worth mentioning that a relaxed tree reconciliation map $\mu$ exists for a
gene tree $(T;t,\sigma)$ that satisfies the conditions in Theorem \ref{thm:speciestriples}
to \emph{any} species tree $S$ that displays all triples in $\mathcal{S}(T;t,\sigma)$ \cite{HHH+12}. 
Moreover, this map can be computed in polynomial time \cite{HHH+12}. 

Note that the main result in \cite[Theorem 8]{HHH+12}
was generalized in \cite{Hellmuth2017} for non-binary \rev{event-labeled} gene trees
as follows.

\begin{theorem}[{\cite[Thm.\ 5.7]{Hellmuth2017}}]
	Let  $(T=(V,E);t,\sigma)$ be an event-labeled gene tree such that 
	for every vertex $x$ in $T$ with $t(x)=\bul$ and children  $x_1\dots,x_k$, $k\geq 2$, we
        have
	$\sigma(L_T(x_i)) \cap \sigma(L_T(x_j)) = \emptyset$, $1\leq i<j\leq k$.
	Then, there exists a tree reconciliation map $\mu$ \rev{from}
        $(T;t,\sigma)$ 
        \rev{to} some species tree $S$
	if and only if $\mathcal{S}(T;t,\sigma)$ is compatible. 
	\label{thm:speciestriples2}
\end{theorem}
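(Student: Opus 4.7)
The plan is to prove both directions separately, using the axioms of a tree reconciliation map recalled in Section~\ref{sec:rec-tree}. For the only-if direction, assume $\mu$ is a tree reconciliation map from $(T;t,\sigma)$ to a species tree $S$, so it suffices to show $\mathcal{S}(T;t,\sigma)\subseteq R(S)$. Fix $\sigma(a)\sigma(b)|\sigma(c)\in\mathcal{S}(T;t,\sigma)$ and set $v=\lca_T(a,b,c)$. Since $ab|c\in R(T)$, the vertex $v$ has two distinct children $v_1,v_2$ with $a,b\preceq_T v_1$ and $c\preceq_T v_2$. Because $t(v)=\bul$, Property~(R2.iii) guarantees that $\mu(v_1)$ and $\mu(v_2)$ are incomparable in $S$, while (R3) (combined with the head-of-arc operation if $v_i$ is a duplication) yields $\sigma(a),\sigma(b)\preceq_S\mu(v_1)$ and $\sigma(c)\preceq_S\mu(v_2)$. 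Hence $\lca_S(\sigma(a),\sigma(b))$ and $\sigma(c)$ are incomparable in $S$, so $S$ displays $\sigma(a)\sigma(b)|\sigma(c)$.

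For the converse, suppose $\mathcal{S}(T;t,\sigma)$ is compatible, let $S=(W,F)$ be any species tree on $\Spe$ with $\mathcal{S}(T;t,\sigma)\subseteq R(S)$, and define $\mu\colon V\to W\cup F$ by $\mu(g)=\sigma(g)$ for $g\in\Gen$, by $\mu(x)=\lca_S(\sigma(L_T(x)))$ whenever $t(x)=\bul$, and by $\mu(x)=e^{\lca_S(\sigma(L_T(x)))}$ whenever $t(x)=\squ$; the latter arc exists because $|\sigma(\Gen)|>1$ forces $\lca_S(\sigma(L_T(x)))\neq\rho_S$. Properties (R1), (R2.i$^*$), and (R2.ii) hold by construction, so it remains to verify (R2.iii) and (R3).

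The crucial step is (R2.iii). Let $x$ be a speciation vertex with distinct children $y_1,y_2$ and set $A_i=\sigma(L_T(y_i))$. Assume for contradiction that $\lca_S(A_1)\preceq_S\lca_S(A_2)$. The case $|A_2|=1$ is excluded by $A_1\cap A_2=\emptyset$, so one can pick leaves $a_2,a_2'\in L_T(y_2)$ with $\sigma(a_2)\neq\sigma(a_2')$ and $\lca_S(\sigma(a_2),\sigma(a_2'))=\lca_S(A_2)$, together with any leaf $a_1\in L_T(y_1)$. Then $a_2a_2'|a_1\in R(T)$ with $\lca_T(a_1,a_2,a_2')=x$ a speciation vertex, and $\sigma(a_1),\sigma(a_2),\sigma(a_2')$ are pairwise distinct by the disjointness hypothesis; hence $\sigma(a_2)\sigma(a_2')|\sigma(a_1)\in\mathcal{S}(T;t,\sigma)\subseteq R(S)$. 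But $\sigma(a_1)\preceq_S\lca_S(A_1)\preceq_S\lca_S(A_2)=\lca_S(\sigma(a_2),\sigma(a_2'))$ prevents $S$ from displaying this triple, a contradiction. Thus $\lca_S(A_1)$ and $\lca_S(A_2)$ are incomparable, which translates to (R2.iii) once vertex, arc, and leaf cases for $\mu(y_1),\mu(y_2)$ are unpacked.

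Finally, (R3) follows from $\sigma(L_T(x))\subseteq\sigma(L_T(y))$ whenever $x\prec_T y$, giving $\lca_S(\sigma(L_T(x)))\preceq_S\lca_S(\sigma(L_T(y)))$; a short case analysis on the event labels of $x$ and $y$, using the just-established incomparability across distinct children of a speciation vertex to supply the required strictness when at least one of $x,y$ is a speciation, handles the vertex/arc distinctions in the definition of $\prec_S$. I expect (R2.iii) to be the main obstacle, since it is the step that genuinely combines compatibility of $\mathcal{S}(T;t,\sigma)$ with the disjointness hypothesis; the remaining verifications amount to bookkeeping from the explicit formula for $\mu$.
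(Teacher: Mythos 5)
The paper states Theorem~\ref{thm:speciestriples2} as an imported result, citing it directly from \cite{Hellmuth2017} without reproducing a proof, so there is no in-text argument to compare yours against; judged on its own, your proof is correct and follows what is essentially the canonical route behind the cited result (and behind the paper's surrounding machinery, e.g.\ Lemma~\ref{lem:tree2tree-reconc} and the remark after Theorem~\ref{thm:speciestriples}): read off displayed triples from (R2.iii) and (R3) for the only-if direction, and for the converse define $\mu$ via $\lca_S(\sigma(L_T(\cdot)))$ and show that compatibility together with the disjointness hypothesis forces $\lca_S(\sigma(L_T(y_1)))$ and $\lca_S(\sigma(L_T(y_2)))$ to be incomparable. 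Your contradiction argument for (R2.iii) via the triple $\sigma(a_2)\sigma(a_2')\,|\,\sigma(a_1)$ is the right key step and is carried out correctly. Two small points should be made explicit: compatibility of $\mc{S}(T;t,\sigma)$ only guarantees a tree on $\bigcup_{r\in\mc{S}(T;t,\sigma)}L(r)$, which may be a proper subset of $\Spe$, so you need the (easy) observation that such a tree extends to a species tree on all of $\Spe$ still displaying every triple; and your choice of $a_2,a_2'$ silently uses the fact that for any leaf set $A$ with $|A|\geq 2$ in a tree there exist $p,q\in A$ with $\lca_S(p,q)=\lca_S(A)$, which is true but deserves a sentence.
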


\begin{figure}[tbp]
  \begin{center}
    \includegraphics[width=.9\textwidth]{./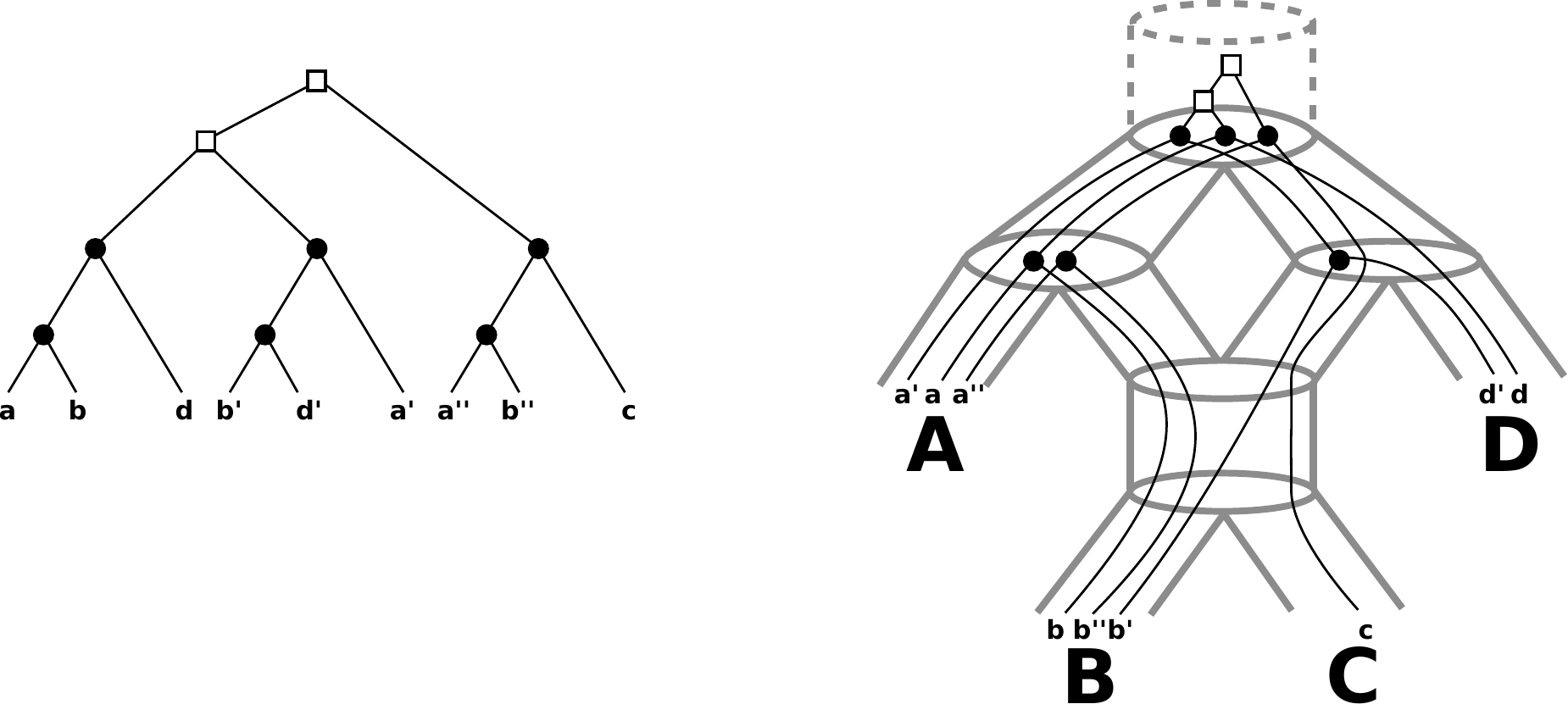}
  \end{center}
	\caption{
	Left, an event-labeled binary gene tree $(T;t,\sigma)$ on a 
	set $\Gen = \{a,a',a'',b,b',b'',c,d,d'\}$ of genes is shown. 
		\rev{Speciation and duplication events 
	are represented as $\bullet$ and $\square$, respectively. } 
	For $(T;t,\sigma)$ there is neither a 
	  (relaxed) tree reconciliation  nor a TreeNet-reconciliation map
  to any species tree. 
	Right, a \rev{tube-like} network $N$ on $\Spe=\{A,B,C,D\}$ is shown. 
	There is a TreeNet-reconciliation map from $(T;t,\sigma)$ 
	to $N$, which is implicitly shown 
	by drawing the gene tree
        \rev{inside $N$}. See text for further details.
	   } 
	\label{fig:rel-rec2}
\end{figure}

One might hope to obtain similar characterizations for 
relaxed tree reconciliation maps or TreeNet-reconciliation maps
for non-binary \rev{event-labeled} gene trees.
However, this does not appear to be straight-forward. 

In particular, in Fig.\ \ref{fig:rel-rec} an example is presented (based
on \cite[Fig.\ 3]{Hellmuth2017}) which shows that the compatibility of $\mathcal{S}(T;t,\sigma)$ 
is not necessary for the existence of a 
relaxed tree reconciliation map for a non-binary event-labeled gene tree $(T;t,\sigma)$, 
even though the conditions in Theorem \ref{thm:speciestriples2} for $(T;t,\sigma)$ are met.
Thus, one might be inclined to try and obtain at least a characterization of
TreeNet-reconciliation maps 
in terms of ``informative'' species triples from a \emph{binary} gene tree.
However, this does not seem to be as straight-forward as it might sound in view
of 
the examples given in Figs.\ \ref{fig:rel-rec2} and \ref{fig:counter-triples}.

\begin{figure}[tbp]
  \begin{center}
    \includegraphics[width=.9\textwidth]{./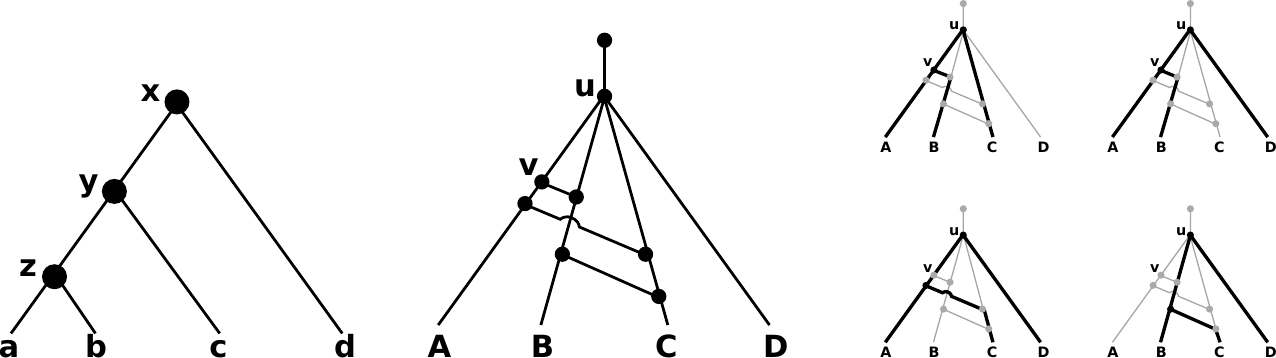}
  \end{center}
	\caption{
		Left, an event-labeled binary gene tree $(T;t,\sigma)$ on a 
	 set $\Gen = \{a,b,c,d\}$ of genes where all inner vertices are 
	 speciation vertices. 	The species network (middle) displays all triples in
	$\mc{S}(T;t,\sigma)  =\{AB|C, AB|D, AC|D, BC|D\}$ as highlighted in the right part.
	However, there is no reconciliation map $\mu$ from $(T;t,\sigma)$ to $N$. 
	See text for further details.
    } 
	\label{fig:counter-triples}
\end{figure}

More precisely,  
consider the event-labeled binary gene tree $(T;t,\sigma)$ in Fig.\ \ref{fig:rel-rec2} (left).
For this tree we have $\sigma(a) = \sigma(a') = \sigma(a'') = A$, 
$\sigma(b) = \sigma(b') =\sigma(b'') = B$,
$\sigma(c) = C$ and $\sigma(d) = \sigma(d') = D$.
Therefore,  $AB|D, BD|A\in \mc{S}(T;t,\sigma)$ and hence, the set 
	$\mc{S}(T;t,\sigma)$ is incompatible. 
	Thus, no (relaxed) tree reconciliation  or TreeNet-reconciliation map
  from $(T;t,\sigma)$
	to a species tree can exist in view of Theorem \ref{thm:speciestriples}. 
 Fig.\ \ref{fig:rel-rec2} (right) shows a network $N$ on $\Spe = \{A,B,C,D\}$. 
	There is a TreeNet-reconciliation map  from $(T;t,\sigma)$ 
	to $N$, which is implicitly shown in Fig.\ \ref{fig:rel-rec2} (right)
        by drawing $(T;t,\sigma)$ inside $N$. 
	Here,  $N$ does not display the species triple
	$AB|C\in  \mc{S}(T;t,\sigma)$. Thus, it is not necessary that 
 each triple of $\mathcal{S}(T;t,\sigma)$ is displayed by $N$
	for the existence of a TreeNet-reconciliation map. 

Now consider the 
 event-labeled binary gene tree $(T;t,\sigma)$ as in Fig.\ \ref{fig:counter-triples} (left). 
	 Here, $\sigma(a)=A$, $\sigma(b)=B$, $\sigma(c)=C$, and $\sigma(d)=D$.
	The network $N=(W,F)$ in Fig.\ \ref{fig:counter-triples} (middle) displays all triples in
	$\mc{S}(T;t,\sigma)  =\{AB|C, AB|D, AC|D, BC|D\}$.
	However, there is no TreeNet-reconciliation map $\mu$ from $(T;t,\sigma)$ to $N$. 
	To see this, assume for contradiction that there is a TreeNet-reconciliation map $\mu$. 
	Observe first that each gene is mapped to the corresponding 
	species (cf. Property~(R1)). Due to Property~(R3), we have $\mu(x)\succ_N \mu(d) = D$. 
	The latter together with  Property~(R2.i) implies that only $\mu(x)=u$ is possible. 
	In addition Property~(R3) implies that $\mu(x)\succ_N\mu(y)\succ_N A,B,C$ 
	and hence, only $\mu(y)=v$ is possible. However, Properties~(R2.i) and (R3) imply that $\mu(z)\in W^0$ and 
	$\mu(y) \succ_N\mu(z)\succ_N A,B$, which is not possible. 
	Thus, $N$ is not a network for $(T;t,\sigma)$. 
	Note,  $(T;t,\sigma)$  satisfies the assumptions in Theorem \ref{thm:speciestriples}
	and $\mc{S}(T;t,\sigma)$ is compatible. Thus, Theorem \ref{thm:speciestriples} implies that 
	for any species tree $S$ that displays
	all triples in $\mc{S}(T;t,\sigma)$ there is a TreeNet-reconciliation map
	from $(T;t,\sigma)$ to $S$.

\section{Unfoldings and Foldings}  
\label{sec:fold}

In the next section we shall show that we can always 
reconcile a given event-labeled gene tree with \emph{some} species network.
To show this we shall use the concept of \emph{foldings} of MUL-trees
which we shall consider in this section.

We begin by recalling an unfolding operation $U$ of networks that was 
first proposed in \cite{HM:06}.
For any network $N$ on $\Spe$, this operation constructs a pseudo MUL-tree
$(U^*(N),\chi^*)$  as follows:
\begin{itemize}
	\item The vertex set $V^*$ of $U^*(N)$ is the set of all directed paths $P$ in $N$
	that start at the root of $N$ and end in a vertex of $N$,
	\item there is an arc from a vertex $P$ in $U^*(N)$ to a vertex $P'$ in $U^*(N)$ if and only if $P' = P a$
	holds for some arc $a$ in $N$, and 
	\item and $\chi^*\colon \Spe\to  2^{L(U^*(N))}- \{\emptyset\} $
	is the map that assigns to all $x\in \Spe$ the set of directed paths in
	$V^*$ that end in $x$.
\end{itemize}

The MUL-tree obtained by suppressing all indegree one and outdegree one vertices
in $U^*(N )$, if there are any, is denoted by $U(N )$. In \cite{HM:06}, 
it is shown that $(U(N ),\chi^*)$ is indeed a MUL-tree and that it is unique.

We now want to perform the converse operation and ``fold up'' 
a MUL-tree into a network. We formalize this by 
adapting the concept of a folding map as defined in
\cite[p.1771]{huber2016folding}.

\begin{defi}[Folding map of a MUL-tree onto a network] 
	Given a pseudo MUL-tree $(M=(D,U),\chi)$ labeled by $\Spe$, 
	and a  network $N=(W,F)$ on $\Spe$, a \emph{folding 
	map} $f\colon (M,\chi) \to N $ is a pair of surjective
        functions $\fV\colon D \to W$ and
        $\fE\colon U \to F$ 
	such that 
	\begin{itemize}
		\item[(F1)] 
		for all $a \in U$ we have $t_N(\fE(a)) = \fV(t_M(a))$ and $h_N(\fE(a)) = \fV(h_M(a))$,
		\item[(F2)] if $v \in L(M)$ with $v \in \chi(x)$ for some $x\in \Spe$ then $\fV(v)=x$, and
		\item[(F3)] for each arc $a \in F$ and $v \in D$ such that $\fV(v)=t_N(a)$
		there is a unique arc $\widetilde{a^v}  \in U$ (the {\em lifting of the arc $a$ at $v$}) 
		such that $\fE(\widetilde{a^v})=a$ and $t_M(\widetilde{a^v}) = v$. 
	\end{itemize}

	We say that a \emph{MUL-tree $M$ can be folded into a network
        $N$}, whenever there is a
		subdivision $M'$ of $M$ such that there is a folding map $f$ from $M'$ to $N$. 
	\label{def:MUL2N}
\end{defi}

	Informally speaking, 
	Property (F1) ensures that the maps $\fV$ and $\fE$ are linked, that is, 
	each arc $a$ in $M$ is mapped to an arc $a'$ in $N$ such that
	the tail and head of $a$ is mapped to the tail and head of $a'$, respectively. 
	Property (F2) ensures that each leaf (gene) is mapped to the species in which it resides. 
	Finally, Property (F3) ensures that two distinct arcs of $M$ with the same
        tail are never mapped
	to the same arc in $N$. 

        The following two results are restated from \cite[Theorem~2 and Corollary 2]{huber2016folding},
        and provide a
useful connection between foldings of MUL-trees into a multi-arc free
network $N$ and $(U^*(N),\chi^*)$:

\begin{theorem}\label{universal-folding}
  Let $N$ be a multi-arc free network. Then the
  mapping $f^*: (U^*(N),\chi^*) \to N$ which takes 
  each vertex in $U^*(N)$ to its last vertex and
  each arc in $U^*(N)$ to its
	corresponding arc in $N$ is a folding map.
\end{theorem}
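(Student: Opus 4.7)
The plan is to unpack the definitions and verify, in order, the axioms (F1), (F2), (F3) of Definition~\ref{def:MUL2N} for the pair $f^*=(\fV,\fE)$, together with the surjectivity of both components. By construction, $\fV$ sends each $P\in V^*$ to the last vertex of the directed path $P$ in $N$, and $\fE$ sends each arc $(P,Pa)$ of $U^*(N)$ to the arc $a$ of $N$ used to extend $P$.

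For (F1), I would note that if $(P,Pa)$ is an arc of $U^*(N)$, then by the construction of $U^*(N)$ the vertex $Pa$ is the path $P$ extended by the arc $a$ of $N$. Hence the last vertex of $P$ is $t_N(a)$ and the last vertex of $Pa$ is $h_N(a)$, giving $t_N(\fE((P,Pa)))=\fV(P)$ and $h_N(\fE((P,Pa)))=\fV(Pa)$ at once. For (F2), a leaf of $U^*(N)$ is a path $P$ ending at a leaf $x$ of $N$, and by the definition of $\chi^*$ exactly such $P$ lies in $\chi^*(x)$, so $\fV(P)=x$ coincides with the label required by (F2).

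The main step, and the only place where the hypothesis that $N$ is multi-arc free is actually used, is (F3). Given $a=(u,v)\in F$ and $P\in V^*$ with $\fV(P)=u$, the concatenation $Pa$ is itself a valid element of $V^*$ (it extends the root-to-$u$ path $P$ by $a$), so $(P,Pa)$ is an arc of $U^*(N)$ with tail $P$ mapping under $\fE$ to $a$; this establishes existence of the lifting. For uniqueness, any arc of $U^*(N)$ with tail $P$ has the form $(P,Pb)$ for some arc $b$ of $N$ with $t_N(b)=u$, and it is mapped by $\fE$ to $b$. Hence if it maps to $a$ then $b$ is an arc from $u$ to $v$, and multi-arc freeness of $N$ forces $b=a$. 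I expect this uniqueness step to be the main obstacle to watch out for, because it is precisely where the argument would collapse for a network with multi-arcs: distinct parallel arcs in $N$ would yield distinct extensions of $P$ in $U^*(N)$ with the same $\fE$-image.

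Finally, for the surjectivity of $\fV$ and $\fE$, I would invoke the fact that in a phylogenetic network every vertex is reachable from the root $\rho_N$ by a directed path. Thus for each $w\in W$ there is a path $P$ from $\rho_N$ to $w$, giving $\fV(P)=w$, and for each arc $a=(u,v)\in F$, concatenating any root-to-$u$ path $P$ with $a$ yields an arc of $U^*(N)$ whose $\fE$-image is $a$. Putting the four items together verifies that $f^*$ meets all the requirements of a folding map.
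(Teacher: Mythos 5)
Your verification is correct, but note that the paper itself offers no proof of this statement: Theorem~\ref{universal-folding} is explicitly imported (``restated from \cite[Theorem~2 and Corollary~2]{huber2016folding}''), so there is no in-paper argument to compare against. Your direct check of (F1)--(F3) and of surjectivity from the definitions of $U^*(N)$ and of a folding map is sound and is essentially the only reasonable way to prove the result from scratch. One small point worth flagging: the paper defines a directed path as a sequence of \emph{vertices}, so in the presence of a multi-arc $b\neq b'$ from $u$ to $v$ the extensions $Pb$ and $Pb'$ would be the \emph{same} vertex of $U^*(N)$ and the phrase ``its corresponding arc in $N$'' would already be ambiguous; thus multi-arc freeness is needed for $\fE$ to be well-defined at all, not only for the uniqueness half of (F3) as you suggest. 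This relocates, but does not weaken, the role of the hypothesis, and the rest of your argument (existence of liftings, leaves of $U^*(N)$ being exactly the root-to-leaf paths, and reachability of every vertex and arc from $\rho_N$ giving surjectivity) is complete.
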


\begin{proposition}\label{prop:iso-mul-fold}
  Suppose that $(M,\chi)$ is a pseudo MUL-tree and $N$ is a multi-arc free
  network, both on $\Spe$, and that $f\colon (M,\chi)\to N$ is a folding map. 
Then  $(M,\chi)$ is isomorphic with $(U^*(N),\chi^*)$.
\end{proposition}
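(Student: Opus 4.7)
The plan is to construct an explicit digraph isomorphism $\varphi\colon V(M)\to V(U^*(N))$ respecting the labelings $\chi$ and $\chi^*$. For each $v\in V(M)$, let $P^M_v=(\rho_M=v_0,v_1,\ldots,v_l=v)$ be the unique directed path from $\rho_M$ to $v$ in $M$, and set $\varphi(v)=(\fV(v_0),\fV(v_1),\ldots,\fV(v_l))$. First I would check that $\varphi(v)$ is indeed a vertex of $U^*(N)$, i.e.\ a directed path in $N$ beginning at $\rho_N$. That each consecutive pair $(\fV(v_{i-1}),\fV(v_i))$ is an arc of $N$ follows from (F1) applied to the arcs of $P^M_v$, and since $N$ is acyclic the resulting directed walk has pairwise distinct vertices and thus is a directed path. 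To see that $\fV(\rho_M)=\rho_N$, note that by surjectivity of $\fV$ there is some $u\in V(M)$ with $\fV(u)=\rho_N$; applying the construction to the $\rho_M$-to-$u$ path produces a directed walk in $N$ ending at $\rho_N$, which must be trivial since $\rho_N$ has in-degree $0$, forcing $u=\rho_M$.

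Next I would show that $\varphi$ is a bijection. For surjectivity, given any directed path $P=(\rho_N,w_1,\ldots,w_l)$ in $N$, I iteratively lift arcs using (F3): starting from $\varphi(\rho_M)=(\rho_N)$, at step $i$ I apply (F3) to the arc $a=(w_{i-1},w_i)$ and the already-constructed $v_{i-1}\in V(M)$ satisfying $\fV(v_{i-1})=w_{i-1}$, obtaining a unique arc $(v_{i-1},v_i)$ in $M$ with $\fE((v_{i-1},v_i))=a$, so that $\varphi(v_l)=P$ by construction. For injectivity, suppose $\varphi(u)=\varphi(v)$; I proceed by induction on the common length $l$ of $P^M_u$ and $P^M_v$, the base case $l=0$ being immediate. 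For the inductive step, let $u',v'$ be the parents of $u,v$ in $M$; then $\varphi(u')$ and $\varphi(v')$ both equal the truncation of $\varphi(u)=\varphi(v)$, whence $u'=v'$ by induction, and the arcs $(u',u)$ and $(u',v)$ both map under $\fE$ to the last arc of $\varphi(u)$, so the uniqueness clause in (F3) forces $u=v$.

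For the isomorphism property, if $(u,v)$ is an arc of $M$ then $P^M_v$ extends $P^M_u$ by $(u,v)$, so $\varphi(v)$ is $\varphi(u)$ extended by $\fE((u,v))$, which is precisely the definition of an arc in $U^*(N)$; conversely, if $(P,P')$ is an arc of $U^*(N)$ with $P'=Pa$, applying (F3) to $a$ and $\varphi^{-1}(P)$ yields the required arc of $M$. For the labelings, (F2) gives that if $v\in\chi(x)$ then $\fV(v)=x$, so $\varphi(v)$ ends at $x$ and hence $\varphi(v)\in\chi^*(x)$; conversely, if $\varphi(v)\in\chi^*(x)$ then $\fV(v)=x\in\Spe$ is a leaf of $N$, and since any outgoing arc at $v$ would force one at $\fV(v)$ via (F1), the vertex $v$ must itself be a leaf of $M$ with label $x$. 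The main obstacle is coordinating the asymmetric lifting property (F3) --- which only governs outgoing arcs from a prescribed vertex --- so as to yield both the existence (surjectivity) and uniqueness (injectivity) halves of the bijection; multi-arc freeness of $N$ is used implicitly to ensure that vertex sequences unambiguously specify paths, so that $U^*(N)$ is well-defined and the inductive lifting argument is unambiguous.
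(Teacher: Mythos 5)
Your argument is correct. Note, however, that the paper does not actually prove this proposition: it is restated from the reference \cite{huber2016folding} (Theorem~2 and Corollary~2 there), so there is no in-paper proof to compare against. What you have written is a sound, self-contained reconstruction: the map $\varphi(v)=(\fV(v_0),\dots,\fV(v_l))$ along the unique root-to-$v$ path is exactly the right isomorphism, and each of your steps checks out. In particular, the two places that genuinely require care are handled properly: (a) the identity $\fV(\rho_M)=\rho_N$, which you correctly derive from surjectivity of $\fV$ together with the indegree-$0$ condition on $\rho_N$ rather than assuming it; and (b) the injectivity step, where you need multi-arc freeness of $N$ to conclude that the arcs $(u',u)$ and $(u',v)$ have the \emph{same} $\fE$-image (they a priori only share tail and head in $N$) before invoking the uniqueness clause of (F3) --- you flag this reliance explicitly, which is the right instinct, since for a network with multi-arcs the statement fails and this is precisely where the hypothesis enters. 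The only cosmetic gap is in the converse direction of the labeling check, where "leaf of $M$ with label $x$" compresses the observation that every leaf of $M$ lies in some $\chi(y)$ and (F2) then forces $y=x$; this is immediate from the definition of a MUL-tree, so nothing is missing in substance.
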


We conclude this section with two useful results concerning folding maps.
The first generalizes a result from \cite[p.\ 618]{HM:06}. As part of this,
we first present a construction that associates a network $F(M)$
to a MUL-tree $(M,\chi)$.

\begin{defi}
	Let $(M,\chi)$ with $M=(D,U)$ be a MUL-tree on $\Spe$ and
  $(M',\chi)$ with $M'=(D',U')$ be its simple subdivision.
	Now we construct a digraph $N=(W,F)$ as follows:

	For each $x \in \Spe$ find all the arcs in $M'$ \rev{whose}
        head
        \rev{is} contained in $\chi(x)$
	and identify them to obtain a digraph $N=(W,F)$. To be more precise, 
	for $x\in \Spe$ define $U_x\subseteq U$  as the set of all arcs in $M$ that are incident with a leaf $l$
	with $l\in \chi(x)$. If $|\chi(x)|\geq 2$, then	every arc $e=(u,v)\in U_x$ of $M$ is replaced	
	by the path $(u,v_e,v)$ to obtain $M'$, where $v_e\notin D$.	
	For each $x\in \Spe$, let $U_x^*$ be the set of all such new arcs $(v_e,v)$, where $(u,v)\in U_x$.  
	Hence, $|U_x^*|\geq 2$.
	Now, we obtain the digraph $N=(W,F)$ (with leaf set $\Spe$ and possibly with multi-arcs) 
	by identifying 	
	for each $x \in \Spe$ all the	arcs in $U^*_x$
	that yield the unique arc $(\parent(x),x)$ in $N$. 
  By construction, all vertices in $D\setminus L(M)$ are still contained in $N$. 
	\label{def:MULfoldN}
\end{defi}

See Fig.\ \ref{fig:MULsimple-foldN-2} and \ref{fig:MULsimple-foldN} 
for illustrative examples of Definition \ref{def:MULfoldN}. 
In fact, the digraph $N$ as in Definition \ref{def:MULfoldN} is a network  
as shown in the following 
\begin{lemma}
	Let $(M,\chi)$ be a MUL-tree on $\Spe$, let
  $(M',\chi)$ \rev{denote} its simple subdivision and let $N$
	be the digraph as in Definition \ref{def:MULfoldN}.
	Then, $N$ is a network.
\label{lem:NisNetwork}
\end{lemma}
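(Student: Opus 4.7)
The plan is to verify each of the defining properties of a network from Definition~\ref{def:network} for the digraph $N$ built in Definition~\ref{def:MULfoldN}. Before checking them, I would record what $N$ looks like structurally. Every inner vertex of $M$ (including $\rho_M$ and its unique child) survives in $N$ unchanged, with the same set of outgoing arcs (up to the fact that some of its children may now be identified). For each $x\in\Spe$ with $|\chi(x)|\geq 2$, all the $|\chi(x)|$ copies of $x$ in $M$ are merged into the single leaf $x$ of $N$, and all the newly introduced subdivision vertices $v_e$ ($e\in U_x$) of $M'$ are merged into one vertex $\parent(x)$ in $N$ of indegree $|U^*_x|=|\chi(x)|$ and outdegree $1$ (whose unique outgoing arc is the identified arc $(\parent(x),x)$). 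For each $x$ with $|\chi(x)|=1$, nothing happens: the single leaf and its incoming arc are inherited verbatim from $M$.

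Next I would show that $N$ is acyclic. The only vertices that get identified in the construction are (i) leaves of $M$ inside a common $\chi(x)$, which have outdegree $0$ in $M$ (hence in $N$), and (ii) the subdivision vertices $v_e$, which all have outdegree $1$ in $M'$ pointing into a leaf. Thus any putative directed cycle in $N$ would have to pass through a vertex of the form $\parent(x)$ or $x$, but both kinds of vertex only lead into leaves, which are dead ends. Hence no cycle is possible, and the acyclicity of $M'$ then forces $N$ to be a DAG. From the above local analysis it is immediate that $L(N)=\Spe$.

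Then I would check (N1)--(N3) one by one. For (N1): $\rho_M$ is kept in $N$ with indegree $0$ and outdegree $1$, and its unique child $u$ in $M$ still has indegree $1$ and outdegree $\geq 2$ in $N$ (its outgoing arcs in $M$ are either inherited or, if a subdivided arc, replaced by an arc of equal count into some $\parent(x)$; multi-arcs may appear but the count is preserved). For (N2): the analysis above shows that the outdegree-$0$ vertices of $N$ are exactly the elements of $\Spe$, each with indegree $1$ (the unique arc from $\parent(x)$). For (N3): a non-root inner vertex $v$ of $N$ is either (a) an inner vertex of $M$, in which case it has indegree $1$ (from $M$) and outdegree $\geq 2$ in $M$ which is preserved in $N$, so it is a tree vertex; or (b) of the form $\parent(x)$ with $|\chi(x)|\geq 2$, which by construction has indegree $|\chi(x)|\geq 2$ and outdegree $1$, and hence is a hybrid vertex.

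The main potential obstacle is bookkeeping: making sure that after all identifications the indegree/outdegree counts (and multi-arcs) come out exactly as claimed, in particular that no inadvertent degeneracy arises when the root's child $u$ in $M$ has all its children inside a single $\chi(x)$. This case would yield $|\chi(x)|$ parallel arcs from $u$ to $\parent(x)$ in $N$; the outdegree of $u$ is still $\geq 2$ by construction, so (N1) continues to hold, and this is the only subtlety that really requires attention.
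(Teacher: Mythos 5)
Your proposal is correct and follows essentially the same route as the paper's proof: a direct verification of (N1)--(N3) by a case analysis on the vertices of $N$, distinguishing surviving inner vertices of $M$ (degrees unchanged), the leaves $x\in\Spe$, and the merged subdivision vertices $\parent(x)$ with $|U_x^*|\geq 2$ incoming arcs (hybrid vertices). Your additional explicit checks of acyclicity and of the parallel-arc subtlety at the root's child are not in the paper's proof but are harmless refinements of the same argument.
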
 
\begin{proof}
	Let $M=(D,U)$, $M'=(D',U')$  and $N=(W,F)$
	be the digraph as in Definition~\ref{def:MULfoldN}.

	To see that $N$ is a network, note first that, by construction, $N$
        satisfies Property~(N2).	
	To see Properties~(N1) and (N3), suppose that $x\in W$. 
  If $x\in D\setminus L(M)$, then, again
	by construction, the indegree and outdegree of $x$ in $N$ is the same as
	the indegree and outdegree of $x$ in $M$. Hence,  Property~(N1)
	holds in case $x$ is the root of $M$ and Property~(N3)
	holds if $x$ is an inner vertex of $M$ distinct from the root.
	Now, assume that $x$ is a leaf of $N$. By construction, 
	there is a unique arc $(y,x)$ in $N$. 
  If $y\in D$, then, again
	by construction, the indegree and outdegree of $x$ in $N$ is the same as
	the indegree and outdegree of $x$ in $M$.
	So assume that $y\notin D$. Then $|U_x^*|\geq 2$. It follows that the indegree of
	$y$ in $N$ is at least 2. Since, again by construction, the outdegree
	of $y$ must be 1, it follows that $y$ is a hybrid vertex of $N$.
	Thus, $N$ satisfies Property~(N3). Therefore, $N$ is indeed a network.
\qed
\end{proof}

\begin{lemma}
	Every MUL-tree $(M,\chi)$ on $\Spe$ can be folded into a network on $\Spe$. 
	The network as well as the underlying folding map can be constructed in polynomial time. 
	\label{lem:MULfoldN}
\end{lemma}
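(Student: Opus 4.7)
The plan is to take the network $N=(W,F)$ constructed from $(M,\chi)$ as in Definition \ref{def:MULfoldN} (which is indeed a network by Lemma \ref{lem:NisNetwork}) together with its simple subdivision $M'=(D',U')$, and to exhibit an explicit folding map $f=(\fV,\fE)\colon (M',\chi)\to N$ that is read off directly from the identification procedure in Definition \ref{def:MULfoldN}.

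First I would define $\fV\colon D'\to W$ as follows. For every vertex $v\in D\setminus L(M)$, set $\fV(v)=v$ (since these vertices are kept in $N$ by construction). For every leaf $v\in \chi(x)$, set $\fV(v)=x\in \Spe$. For every new subdivision vertex $v_e\in D'\setminus D$ that was introduced while subdividing an arc $e=(u,v)\in U_x$ with $|\chi(x)|\geq 2$, set $\fV(v_e)=\parent_N(x)$, i.e., the unique hybrid vertex that in the construction of $N$ arises as the identification of all the arcs in $U^*_x$. The arc map $\fE\colon U'\to F$ is then forced by (F1): each arc in $U'$ that survives unchanged is sent to itself in $F$, and each pair of arcs $(u,v_e)$ and $(v_e,v)$ in the subdivided paths is sent to the two distinct arcs $(u,\parent_N(x))$ and $(\parent_N(x),x)$ in $N$. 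Surjectivity of $\fV$ and $\fE$ is immediate from the construction.

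Next I would verify Properties (F1)--(F3). Property (F1) is essentially the definition of $\fE$: tails and heads of arcs in $M'$ are mapped by $\fV$ to the tails and heads of the corresponding arcs in $N$, and one just has to read this off by case distinction on whether an arc of $M'$ is incident with a subdivision vertex or not. Property (F2) holds by construction, since each leaf $v\in\chi(x)$ has $\fV(v)=x$. The main obstacle is Property (F3): for every $a\in F$ and every $v\in D'$ with $\fV(v)=t_N(a)$, one must produce a unique lifting $\widetilde{a^v}\in U'$ with $\fE(\widetilde{a^v})=a$ and $t_{M'}(\widetilde{a^v})=v$. I would handle this by distinguishing the three possibilities for $v$: if $v\in D\setminus L(M)$ the outgoing arcs of $v$ in $M'$ are in one-to-one correspondence (via $\fE$) with the outgoing arcs of $v=\fV(v)$ in $N$, since no arc identification involves arcs out of $v$; if $v$ is a subdivision vertex $v_e$, then $v$ has exactly one outgoing arc in $M'$ and $\fV(v)=\parent_N(x)$ has exactly one outgoing arc in $N$, namely $(\parent_N(x),x)$, and these correspond; if $v$ is a leaf of $M'$, then $v$ has no outgoing arcs and $\fV(v)=x$ likewise has none in $N$, so the statement is vacuous. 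In each case the lifting exists and is unique.

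Finally, for the polynomial-time claim, observe that $M'$ is obtained from $M$ by subdividing at most $|L(M)|$ arcs, and $N$ is obtained by identifying, for each $x\in\Spe$, the at most $|\chi(x)|$ arcs in $U^*_x$; both operations are clearly carried out in time polynomial in $|D|+|U|$, and the maps $\fV,\fE$ are defined during this process, so the entire construction together with the folding map can be output in polynomial time. The hard part of the argument is really just Property (F3), but the simple subdivision is engineered precisely so that the only merged arcs in $N$ are those incident to the new hybrid vertices $\parent_N(x)$, each of which has a unique outgoing arc, which is exactly what makes liftings unique.
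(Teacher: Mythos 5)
Your proposal is correct and follows essentially the same route as the paper: both take the network $N$ of Definition~\ref{def:MULfoldN} (a network by Lemma~\ref{lem:NisNetwork}), define $\fV$ to be the identity on the retained inner vertices, send leaves in $\chi(x)$ to $x$ and subdivision vertices $v_e$ to $\parent_N(x)$, let $\fE$ be forced by (F1), and check (F1)--(F3). Your case analysis for (F3) is in fact more explicit than the paper's, which leaves that verification to the reader, and your handling of the leaf images under $\fV$ is slightly cleaner.
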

\begin{proof}
	Let $(M,\chi)$ with $M=(D,U)$ be a MUL-tree on $\Spe$ and let
  $(M',\chi)$ with $M'=(D',U')$ be its simple subdivision and let $N=(W,F)$
	be the digraph as in Definition \ref{def:MULfoldN}. By Lemma \ref{lem:NisNetwork}, 
	$N$ is a network.

	We next show that there exists a folding map $f:(M',\chi)\to N$ of $(M',\chi)$ into $N$. To see
   this consider first the map
	$\fV\colon D' \to W$ 
	given by  
	\[\fV(v) = \begin{cases}
			v & \text{if } v \in D \\
				t_N(e^x) & \, \text{else.} 
	\end{cases}\]
	By construction, $\fV$ is surjective.
		To obtain the map $\fE\colon U' \to F$, let us first examine in more detail the MUL-tree $M$,  its simple subdivision $M'$
		and the  network $N$. Let $x\in \Spe$. 
		If $|\chi(x)|\geq 2$, then each arc $e=(a,b)\in U_x$ in $M$ is replaced by 
		the path $(a,b_e,b)$ to obtain $M'$ and $\fV(b_e) = t_N(e^x)$ and $\fV(b) = x$. 
		Note that $N$ may contain multi-arcs. 
		More precisely, the construction implies that	there are multi-arcs $f_1,\dots,f_{\ell}$, $\ell\geq 2$ between 
		two vertices $u$ and $v$ in $N$ with $v\prec_N u$  if and only if $\fV^{-1}(u) = a \in D$ and 
		$a$ is adjacent in $M$ to exactly $\ell$ leaves $b_1,\dots b_{\ell}$ with  $b_i\in \chi(x)$ for some $x\in \Spe$ and $1\leq i\leq \ell$.
		In other words, 	there are exactly $\ell\geq 2$ arcs $e_1,\dots,e_{\ell}$ in $U_x$
		with $t_M(e_i) = a$. 	  
		Now, we put $\fE(e_i) = f_i$ for all such arcs $e_i \in U_x$, $1\leq i \leq \ell$ and $x\in  \Spe$. 
		\rev{For} all other arcs $e=(u,v)\in U'$ that do not result in multi-arcs in $N$, we
		put $\fE(e) = (\fV(u),\fV(v))$.
	By construction $\fE$ is clearly surjective. Leaving the details
	of the proof to the interested reader, it is also not difficult
	to see that $f$ satisfies Properties~(F1) - (F3). Thus, $f$ is 
		a folding map of $(M',\chi)$ into $N$. 
				By Definition~\ref{def:MUL2N} and since $M'$ is a subdivision of $M$, it follows that 
		the MUL-tree $(M,\chi)$ can be folded into $N$.

	Finally, all construction steps to obtain $N$ and $f$  can obviously be carried out in polynomial time. 
\qed \end{proof}

\begin{figure}[tbp]
	\begin{center}
		\includegraphics[width=1.\textwidth]{./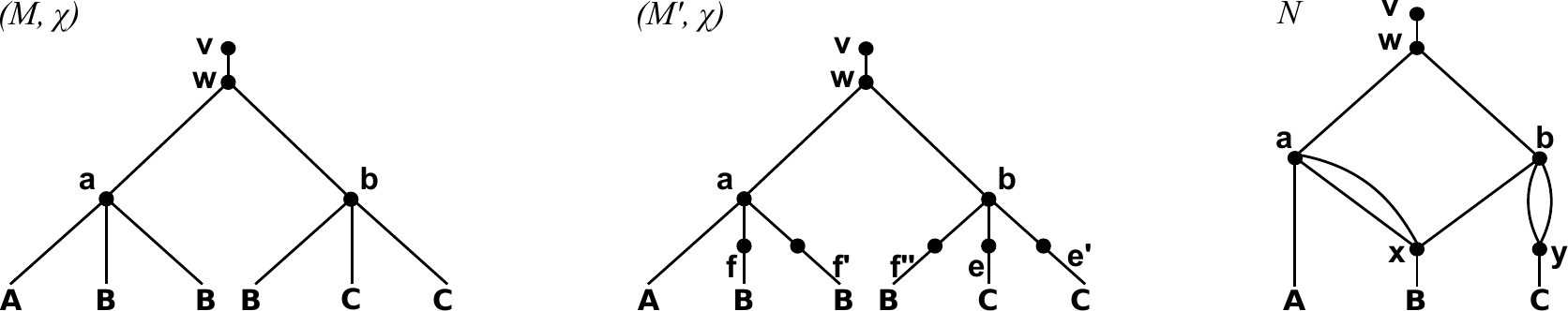}
	\end{center}
	\caption{ 
			Left, a MUL-tree $(M,\chi)$ and its simple subdivision 
			$(M',\chi)$ (middle panel). In the right panel,    
				a network $N$ that is obtained from 
			$(M',\chi)$ by identifying the two arcs 
			$e$ and $e'$ to obtain the arc $(y,C)$ in $N$, 	and 
			the arcs $f, f'$ and $f''$ to obtain the arc $(x,B)$ in $N$. 
			In addition, the identification of $f$ and $f'$ (resp.\ $e$ and $e'$) yields the two
			multi-arcs between $a$ and $x$ (resp.\ $b$ and $y$) in $N$, 
			see Definition \ref{def:MULfoldN}
		 for details.  Note that $N$ is the ``fold up'' of $(M',\chi)$ and, therefore, 
			also of $(M,\chi)$. 
	}
	\label{fig:MULsimple-foldN-2}
\end{figure}

Our second result shows that \rev{folding maps are} ancestor preserving. 

\begin{lemma}
	Let $f=(\fV,\fE)$ be folding map  
from a pseudo MUL-tree $(M = (D,U),\chi)$ on $\Spe$ into a
	network $N=(W,F)$ on $\Spe$. Then, 
	for any $a,b\in D\cup U$ with $a\preceq_M b$, we have
	$g(a)\preceq_N g(b)$ where $g(x)$ with $x\in\{a,b\}$ is $\fV(x)$ if $x\in D$ and $\fE(x)$ otherwise. 
	In particular, if $a$ and $b$ are distinct and not both contained in $U$, then $g(a)\prec_N g(b)$.
	\label{lem:MUL-ancestor}
\end{lemma}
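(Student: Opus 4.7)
The plan is to reduce everything to the vertex--vertex case and then unpack the paper's definitions of $\preceq_N$ involving arcs. Write $a \preceq_M b$; I would split into the four cases according to whether each of $a,b$ lies in $D$ or in $U$. The heart of the argument is the case $a,b \in D$, which follows from (F1) by a one-line induction on path length; the other three cases reduce to this one mechanically.

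Vertex--vertex case. If $a \preceq_M b$ with $a,b \in D$, then (since $M$ is a tree, hence a DAG) there is a directed path $b = v_0, v_1, \dots, v_k = a$ in $M$. Property (F1) says $\fE((v_i,v_{i+1}))$ is an arc in $N$ from $\fV(v_i)$ to $\fV(v_{i+1})$, so $\fV(v_0), \fV(v_1), \dots, \fV(v_k)$ is a directed walk in $N$, yielding $\fV(a) \preceq_N \fV(b)$. If moreover $a \neq b$, then $k \geq 1$; since $N$ is a DAG, a directed walk of positive length cannot begin and end at the same vertex, so $\fV(a) \prec_N \fV(b)$ strictly.

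Mixed and arc--arc cases. If $a \in D$ and $b = (u,v) \in U$ with $a \preceq_M b$, then by the paper's convention this reads $a \preceq_M v = h_M(b)$. The vertex--vertex case gives $\fV(a) \preceq_N \fV(h_M(b)) = h_N(\fE(b))$ by (F1), which is by definition exactly the statement $\fV(a) \prec_N \fE(b)$. The case $a \in U$, $b \in D$ is symmetric: $a \preceq_M b$ reads $t_M(a) \preceq_M b$, whence $t_N(\fE(a)) = \fV(t_M(a)) \preceq_N \fV(b)$, giving $\fE(a) \prec_N \fV(b)$. Finally, for $a,b \in U$ distinct with $a \preceq_M b$, we have $t_M(a) \preceq_M h_M(b)$ in $D$; the vertex--vertex case together with (F1) yields $t_N(\fE(a)) = \fV(t_M(a)) \preceq_N \fV(h_M(b)) = h_N(\fE(b))$, which matches the paper's arc--arc definition of $\fE(a) \preceq_N \fE(b)$ (either $\fE(a)=\fE(b)$, or the chain $h_N(\fE(a)) \prec_N t_N(\fE(a)) \preceq_N h_N(\fE(b)) \prec_N t_N(\fE(b))$, with the outer strict pieces being automatic because they are arcs of the DAG $N$).

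The only thing that needs care is the bookkeeping between the three flavours of the relation $\preceq_N$ (vertex--vertex, vertex--arc, arc--arc); once this is lined up with the definitions in the Preliminaries, the proof is routine. The one genuine observation, needed for the ``in particular'' clause when $a,b$ are distinct and not both in $U$, is that $N$ is a DAG, which rules out a length-$\geq 1$ walk from $\fV(b)$ back to $\fV(a)=\fV(b)$ in the vertex--vertex case.
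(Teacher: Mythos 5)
Your proof is correct and takes essentially the same route as the paper's: a four-way case split on whether $a$ and $b$ lie in $D$ or $U$, with the core being an application of (F1) link by link along the directed path in $M$, and acyclicity of $N$ supplying strictness in the ``in particular'' clause. The only cosmetic difference is that you take the vertex--vertex case as the base and reduce the mixed and arc--arc cases to it via the definitions of $\preceq_N$, whereas the paper bases everything on the arc--arc case and reduces the others by choosing incident arcs; the two organizations are dual and equally valid.
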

\begin{proof}
	First assume that $a,b\in U$ and $a\preceq_M b$. 
	W.l.o.g. we may assume that $a\prec_M b$ as for $a=b$ the lemma trivially holds. 
	Then, there is a (possibly single-vertex) directed path $P$ from $h_N(b)$ to $t_N(a)$. 
	Let us denote the vertices crossed by $P$ by $h_N(b) = v_1,\dots,v_k = t_N(a)$ (in order of their appearance in $P$)
	and, in case $k\geq 2$, let $e_i = (v_i,v_{i+1})\in U$, $1\leq i < k$. Moreover, let $v_0,v_{k+1}\in D$ such that
	$b=(v_0,v_1)$ and $a=(v_k,v_{k+1})$.
	By Property (F1), we have $\fV(v_{i+1})\prec_N \fV(v_{i})$ for all $0\leq i\leq k$. 
	Therefore, $\fE(a) \prec_M \fE(e_{k-1})\prec_N \dots \prec_N \fE(e_{1}) \prec_N \fE(b)$. 
	
	Now assume that $a\in U$, $b\in D$ and $a\preceq_M b$. Then
	there is an arc $e$ with tail $t_M(e)=b$. Hence,  $a\preceq_M e$.
	By the previous argument we have $\fE(a)\preceq_N \fE(e)$ and 
	Property~(F1) implies $\fE(e)\prec_N \fV(b)$.
	
	Now assume that $a\in D$, $b\in U$ and $a\preceq_M b$. Then there is an arc $e$ with tail $h_M(e)=a$.
	Similar arguments as in the previous case show again that $\fV(a) \prec_N \fE(b)$. 
	
	Finally, assume that $a,b\in D$. If $a=b$, then clearly $\fV(a)=\fV(b)$.  
	So assume $a\neq b$ and, thus, $a\prec_M b$. Then, there is 
	an arc $e$ with $t_M(e)=b$ and an arc $e'$ with $h_M(e') = a$. 
	Similar arguments as in the previous cases combined with 
	Property~(F1) imply that 
	$\fV(a)\prec_N \fE(e') \preceq_N \fE(e) \prec_N \fV(b)$. 
\qed \end{proof}

\section{Existence of Reconciliation Maps to Networks}
\label{sec:existence}

In Fig.\ \ref{fig:rel-rec2}, we presented an example which 
shows that if there is a TreeNet-reconciliation map from an event-labeled  gene tree
$(T;t,\sigma)$ to a network $N$, then $N$ does not need to display all
informative triples in $\mc{S}(T;t,\sigma)$. In fact, the network may display
species triples that are not supported by $(T;t,\sigma)$. For example, 
the network in Fig.\ \ref{fig:rel-rec2} displays 
the triple $BC|A$ although $AB|C \in \mc{S}(T;t,\sigma)$.
In other words,
a network $N$, for which a TreeNet-reconciliation map from $(T;t,\sigma)$ to $N$ exists, 
does not need to preserve much (or possibly even any) of the structure of $T$. Thus, the question
arises as to whether there always exists a TreeNet-reconciliation map from $(T;t,\sigma)$ to
some network? In Theorem~\ref{thm:composed}, the main result of
this section, we show that this is indeed always the case.

To establish Theorem~\ref{thm:composed},
we first define reconciliation maps between event-labeled gene trees and pseudo-MUL-trees,
a topic that has recently been studied in a somewhat different form in \cite{gregg2017gene}.
Then, we show how to associate to any event-labeled gene tree $(T;t,\sigma)$
a MUL-tree $(M(T;t,\sigma),\chi)$ such that $(T;t,\sigma)$ can be reconciled
with this MUL-tree via some map $\kappa_{(T;t,\sigma)}$. Using the notion
of a 	``combined  reconciliation map'', we then exploit $\kappa_{(T;t,\sigma)}$ to
define a reconciliation map between $(T;t,\sigma)$ and the fold up of $(M(T;t,\sigma),\chi)$.

\begin{defi}[Reconciliation map to a pseudo MUL-tree] \label{def:p-mu} 
	Suppose that $\Spe$ is a set of species, \rev{that} $M=(D,U)$ is a pseudo MUL-tree on $\Spe$, 
	and \rev{that} $(T;t,\sigma)$	is an event-labeled gene tree on $\Gen$. 

	Then,  we say that \emph{$(M,\chi)$ is a pseudo MUL-tree for
	$(T\rev{=(V,E)};t,\sigma)$} if there is a map $\kappa\colon V\to (D\backslash D^1)\cup U$ such that, for all $x\in V$:  
	\begin{description}
		\item[(M1)] \emph{Leaf Constraint.} If $x\in \Gen$ then $\kappa(x)\in L(M)$ and $\kappa(x)\in \chi(\sigma(x))$
		\item[(M2)] \emph{Event Constraint.}
		\begin{itemize}
			\item[(i)]  If $t(x)=\bul$ and $x$ has children $x_1,\dots,x_k$, $k\geq 2$, then 	
									$\kappa(x) \in D\backslash L(M)$ and there exists a directed path $P_i$ from $\kappa(x)$ to
										 $\kappa(x_i)$ and a directed path $P_j$ from $\kappa(x)$ to $\kappa(x_j)$
											for two distinct $i,j\in \{1,\dots,k\}$ such that, in $M$,
											the	first arc on $P_i$ is incomparable with
											the first arc on  $P_j$. 
			\item[(ii)] If $t(x) = \squ$, then $\kappa(x)\in U$. 
		\end{itemize} \vspace{0.03in}
		\item[(M3)] \emph{Ancestor Constraint.}		\\	
		Let $x,y\in V$ with $x\prec_{T} y$, then  
		we distinguish between the two cases:
		\begin{itemize}
			\item[(i)] If $t(x) = t(y) = \squ$, then $\kappa(x)\preceq_M \kappa(y)$, 
			\item[(ii)] otherwise, i.e., at least one of $t(x)$ and $t(y)$ is a speciation $\bul$, 
			$\kappa(x)\prec_M\kappa(y)$.
		\end{itemize}
	\end{description}
	We call $\kappa$ the \emph{MUL-reconciliation map} from $(T;t,\sigma)$ to $M$. 
\end{defi}

Note that Properties~(M1), (M2.ii) and (M3) are canonical extensions 
of the Properties~(R1), (R2.ii) and (R3) of TreeNet-reconciliation maps.
Moreover,  
Property (M2.i) and the fact that $M$ has no hybrid vertices
implies the \rev{following} weaker property for all $x\in D$: 
if $t(x)=\bul$ and $x$ has at least two children, then there exist two children $x_1$ and $x_2$ 
such that $\kappa(x_1) $ and $\kappa(x_2)$ 
are incomparable in $M$. However, the converse implication does not 
always hold. Moreover, Property~(M2.i) cannot be weakened to 
establish results for TreeNet-reconciliation maps
based on MUL-reconciliation maps and particular foldings.

  We next provide a construction that allows us to associate a MUL-tree
  to an event-labeled gene tree.
  Suppose $(T;t,\sigma)$ is an event-labeled gene tree.
  Let $(M(T;t,\sigma),\chi)$
  denote the MUL-tree obtained from $(T;t,\sigma)$ as follows:
  First replace every leaf of $T$ by its label under $\sigma$. Next,
  add an incoming arc to the root \rev{of $T$} to obtain a tree with root having outdegree 1.
  The resulting MUL-tree is $M(T;t,\sigma)$. To obtain $\chi$, we
put $\chi(x)=\{l\in L(T)\mid \sigma(l)=x\}$, for all $x\in \Spe$.

  \begin{defi}
    Suppose that  $(T=(\rev{V},E);t,\sigma)$ is an event-labeled gene tree. Then  we
    call the MUL-tree $(M(T;t,\sigma),\chi)$
    the \emph{MUL-tree for $(T;t,\sigma)$}. Furthermore, \rev{putting $M(T;t,\sigma)=(D,U)$}
    we refer to the map $\kappa_{(T;t,\sigma)}:V\to \rev{D\cup U}$
    given by putting
$\kappa_{(T;t,\sigma)}(x) = \sigma(x)$ if $x\in V$ and, for all $x\in V^0$, by
putting $\kappa_{(T;t,\sigma)}(x) = x$ if  $t(x) = \bul$ and $\kappa_{(T;t,\sigma)}(x) = e^x$ otherwise
as the {\em (trivial) map} from 
$(T;t,\sigma)$ to $(M(T;t,\sigma),\chi)$ 
\label{def:assoMUL}
\end{defi}

  \begin{lemma}\label{trivial}
The map  $\kappa_{(T;t,\sigma)}$ is a MUL-reconciliation from $(T;t,\sigma)$ to $(M(T;t,\sigma),\chi)$.
\end{lemma}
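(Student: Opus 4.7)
The plan is to verify the three axioms (M1), (M2), (M3) of Definition \ref{def:p-mu} directly from the construction of $(M(T;t,\sigma),\chi)$ and $\kappa_{(T;t,\sigma)}$. By the description in Definition \ref{def:assoMUL}, $M(T;t,\sigma)$ differs from $T$ only by relabelling each leaf $l$ of $T$ by its species $\sigma(l)$ and by prepending an extra arc to the root. This yields a natural identification of the inner vertices of $T$ (together with its arcs) with the vertices in $D\setminus(D^1\cup L(M))$ and the arcs in $U$ of $M$, under which the descendant--ancestor relations carry over from $T$ to $M$.

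I would begin with (M1): if $x\in\Gen$ then $\kappa_{(T;t,\sigma)}(x)=\sigma(x)$, which is the leaf of $M$ corresponding to $x$, and by construction of $\chi$ this leaf lies in $\chi(\sigma(x))$. Next I would check (M2.ii): if $t(x)=\squ$ then the unique incoming arc $e^x$ is well-defined in $M$ (even when $x$ is the root of $T$, thanks to the added root arc), and clearly $\kappa_{(T;t,\sigma)}(x)=e^x\in U$.

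For (M2.i), let $x\in V^0$ be a speciation vertex with children $x_1,\ldots,x_k$, $k\ge 2$, in $T$. Then $\kappa_{(T;t,\sigma)}(x)=x\in D\setminus L(M)$, as required. Regardless of whether a given child $x_i$ is a leaf, a speciation vertex, or a duplication vertex, the unique directed path from $x$ to $\kappa_{(T;t,\sigma)}(x_i)$ in $M$ begins with the arc $(x,v_i)$, where $v_i\in D$ is the child of $x$ in $M$ corresponding to $x_i$. Since distinct children of $x$ in $T$ correspond to distinct children of $x$ in $M$, for any $i\neq j$ the arcs $(x,v_i)$ and $(x,v_j)$ share a common tail but have different heads and are therefore incomparable in the sense of Section~\ref{sec:prelim}.

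Finally, (M3) follows by a short case analysis on the labels of $x\prec_T y$. The identification of $V$ with a subset of $D$ preserves $\prec$, so $x\prec_M y$ whenever $x\prec_T y$, and consequently $e^x\prec_M e^y$ for the associated arcs. Combining this with the definition of $\prec_M$ extended to vertex/arc pairs, one reads off $\kappa_{(T;t,\sigma)}(x)\preceq_M\kappa_{(T;t,\sigma)}(y)$ in case (M3.i), and $\kappa_{(T;t,\sigma)}(x)\prec_M\kappa_{(T;t,\sigma)}(y)$ in case (M3.ii) (since at least one image is then the vertex itself rather than its incoming arc, which strictly decreases the position by one step). The main obstacle, such as it is, is merely the book-keeping needed in (M2.i) and (M3) to handle the arc-vs-vertex distinction in the image of $\kappa_{(T;t,\sigma)}$; there is no substantive difficulty.
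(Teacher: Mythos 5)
Your proposal is correct and follows essentially the same route as the paper: the paper likewise treats (M1), (M2.ii) and (M3) as immediate from the construction and concentrates on (M2.i), arguing exactly as you do that the paths from $\kappa_{(T;t,\sigma)}(x)$ to the images of two distinct children start with the distinct arcs $e^{x_i'}$ and $e^{x_j'}$, which are incomparable since $M(T;t,\sigma)$ is a tree. Your additional book-keeping for (M3) and the arc-versus-vertex distinction is sound and matches the paper's (implicit) case analysis.
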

\begin{proof}
	For simplicity let $M\coloneqq M(T;t,\sigma)$. 
	By definition, $\kappa \coloneqq \kappa_{(T;t,\sigma)}$ satisfies (M1), (M2.ii) and (M3).
	To see that $\kappa$ is a MUL-reconciliation map, it thus remains to show that 
	Property (M2.i) is satisfied. 
	Let  $x$ be a vertex in $T$ with $t(x)=\bul$ and with children $x_1,\dots,x_k$, $k\geq 2$. 
	By definition, $\kappa(x) \in D\backslash L(M)$. Now let $x_i$ and $x_j$ be two distinct 
	children of $x$. By construction of $M$, all vertices of $T$ are contained in $M$. 
	However, to make the reading easier, we denote by $v'$ the vertices in $M$
	that correspond to vertex $v$ in $T$.  
	Each of the two vertices $x_i$ and $x_j$ may be a leaf or an inner vertex 
	equipped with a particular event $\bul$ or $\squ$.
 	A straight-forward case analysis and the fact that $\kappa(x_l)$
	is either $x'_l$ or the arc $e^{x'_l}$ where $l\in \{i,j\}$ together 
	with Property (M3) shows that in either case we have $x'_i \preceq_M \kappa(x_i)\prec_M\kappa(x)$
	and $x'_j \preceq_M \kappa(x_j)\prec_M\kappa(x)$.  
  Note that, by construction, the path from $\kappa(x)$ to $\kappa(x_i)$ is the arc $e^{x'_i}$
	and the path from $\kappa(x)$ to $\kappa(x_j)$ is the arc $e^{x'_j}$. 
	Since, $M$ is a tree and $x_i$ and $x_j$ are incomparable in $T$
	the vertices $x'_i$ and $x'_j$ as well as the arcs $e^{x'_i}$ and $e^{x'_j}$ 
	must be incomparable in $M$. 
	Therefore, (M2.i) is also satisfied.
\qed \end{proof}

\rev{Calling the map $\kappa_{(T;t,\sigma)}$ the \emph{trivial MUL-reconciliation map},}
we now prove a technical result which will allow us to link trivial MUL-reconciliation maps
with foldings.

\begin{lemma}\label{lem:reconcMUL-pseudoMUL} 
  \rev{  Suppose that $(T;t,\sigma)$ is an event-labeled gene tree and that $(M,\chi)$
    is a MUL-tree.}
  If there is a  MUL-reconciliation map from $(T;t,\sigma)$ to
  $(M,\chi)$, 
  then there is a  MUL-reconciliation map from $(T;t,\sigma)$ to any subdivision
  $(M',\chi)$ of $(M,\chi)$. 
\end{lemma}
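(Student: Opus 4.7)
The plan is to construct the desired MUL-reconciliation map $\kappa'$ to $M'$ explicitly from $\kappa$ and then verify (M1)--(M3). Since $(M',\chi)$ is a subdivision of $(M,\chi)$, every arc $e=(u,v)\in U$ is replaced in $M'$ by a directed path $P_e$ from $u$ to $v$ whose interior vertices are new and lie in $D'^1$. In particular, every original vertex of $M$ keeps its in- and outdegree in $M'$, so $D\setminus D^1\subseteq D'\setminus D'^1$. I would write $\phi(e)$ for the first arc of $P_e$, so that $t_{M'}(\phi(e))=t_M(e)$, while $\phi(e)=e$ whenever $e$ is not subdivided, and then set
\[
  \kappa'(x) \;=\; \begin{cases} \kappa(x) & \text{if } \kappa(x)\in D\setminus D^1, \\ \phi(\kappa(x)) & \text{if } \kappa(x)\in U. \end{cases}
\]

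With this definition, (M1) and (M2.ii) follow immediately from $L(M)=L(M')$ and $\phi(U)\subseteq U'$. For (M3), I would split according to whether each of $\kappa(x),\kappa(y)$ is a vertex or an arc. The crucial feature is that $\phi(e)$ is the \emph{first} arc of $P_e$, so $t_{M'}(\phi(e))=t_M(e)$ while $h_{M'}(\phi(e))\succeq_{M'} h_M(e)$; consequently, any ancestry statement of the form $\kappa(x)\preceq_M h_M(\kappa(y))$ in $M$ lifts to $\kappa'(x)\preceq_{M'} h_{M'}(\kappa'(y))$ in $M'$, and symmetrically for arcs below vertices or arcs below arcs. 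The only case in which both $\kappa(x)$ and $\kappa(y)$ are arcs is when $t(x)=t(y)=\squ$, because speciation vertices and leaves are mapped to vertices; there only the weaker $\preceq_{M'}$ is required and this is immediate.

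The bulk of the argument lies in verifying (M2.i). Given $x$ with $t(x)=\bul$ and children $x_1,\dots,x_k$, the hypothesis provides directed paths $P_i,P_j$ in $M$ from $\kappa(x)$ to $\kappa(x_i),\kappa(x_j)$ whose first arcs $f_i,f_j$ are incomparable in $M$. I would then expand $P_i$ and $P_j$ by replacing each arc $e$ they traverse with $P_e$, and, when $\kappa(x_i)$ (respectively $\kappa(x_j)$) is an arc, truncate the resulting expansion at $h_{M'}(\phi(\kappa(x_i)))$ (respectively $h_{M'}(\phi(\kappa(x_j)))$), producing directed paths in $M'$ from $\kappa'(x)=\kappa(x)$ to $\kappa'(x_i)$ and $\kappa'(x_j)$ whose first arcs are $\phi(f_i)$ and $\phi(f_j)$ respectively.

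The main obstacle is then to show that $\phi(f_i)$ and $\phi(f_j)$ remain incomparable in $M'$. I would argue by contradiction: assuming, say, $\phi(f_i)\preceq_{M'}\phi(f_j)$ and writing $f_j=(u_j,v_j)$, the definition forces $u_i=t_M(f_i)=t_{M'}(\phi(f_i))$ to be a descendant in $M'$ of $h_{M'}(\phi(f_j))$, which is the first new vertex on $P_{f_j}$ (or $v_j$ itself if $f_j$ is not subdivided). Since any directed descent in $M'$ starting inside $P_{f_j}$ must first reach the original vertex $v_j$ before leaving $P_{f_j}$, we would obtain $u_i\preceq_M v_j$, whence $f_i\preceq_M f_j$, contradicting the incomparability of $f_i$ and $f_j$ in $M$. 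This establishes (M2.i) and completes the proof.
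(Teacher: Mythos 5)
Your proof is correct and follows essentially the same route as the paper: keep $\kappa$ on vertices and send each arc $\kappa(x)\in U$ to a distinguished arc of its subdivision path, the only difference being that the paper chooses the \emph{last} arc of $P_{uv}$ where you choose the \emph{first}; both choices satisfy (M1)--(M3), and you additionally write out the verification (in particular of (M2.i) and (M3)) that the paper dismisses as straightforward.
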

\begin{proof}
Let $\kappa$ be a MUL-reconciliation map from an event-labeled gene tree 
$(T\rev{=(V,E)};t,\sigma)$ to some MUL-tree $(M\rev{=(D,U)},\chi)$ and 
$(M',\chi)$ be a subdivision of $(M,\chi)$. 
For each arc $e=(u,v)$ of $M$ that is subdivided in the construction of $M'$
by a directed path $P_{uv}$ from $u$ to $v$ in $M'$
we denote by $e^*$ the last arc in $P_{uv}$. If $e$ is not subdivided, we put $e^*=e$. 
For all  $v\in \rev{V}$
put 
	\[\kappa'(v) = \begin{cases}
	  \kappa(v) & \text{if } \kappa(v)\in \rev{D}
          \\
		 {\kappa(v)}^* & \text{else.}
	\end{cases}\]
It is now straight-forward to see that $\kappa'$ is a 
MUL-reconciliation map from $(T;t,\sigma)$ to $(M',\chi)$. 
\qed \end{proof}

\begin{figure}[tbp]
  \begin{center}
    \includegraphics[width=.9\textwidth]{./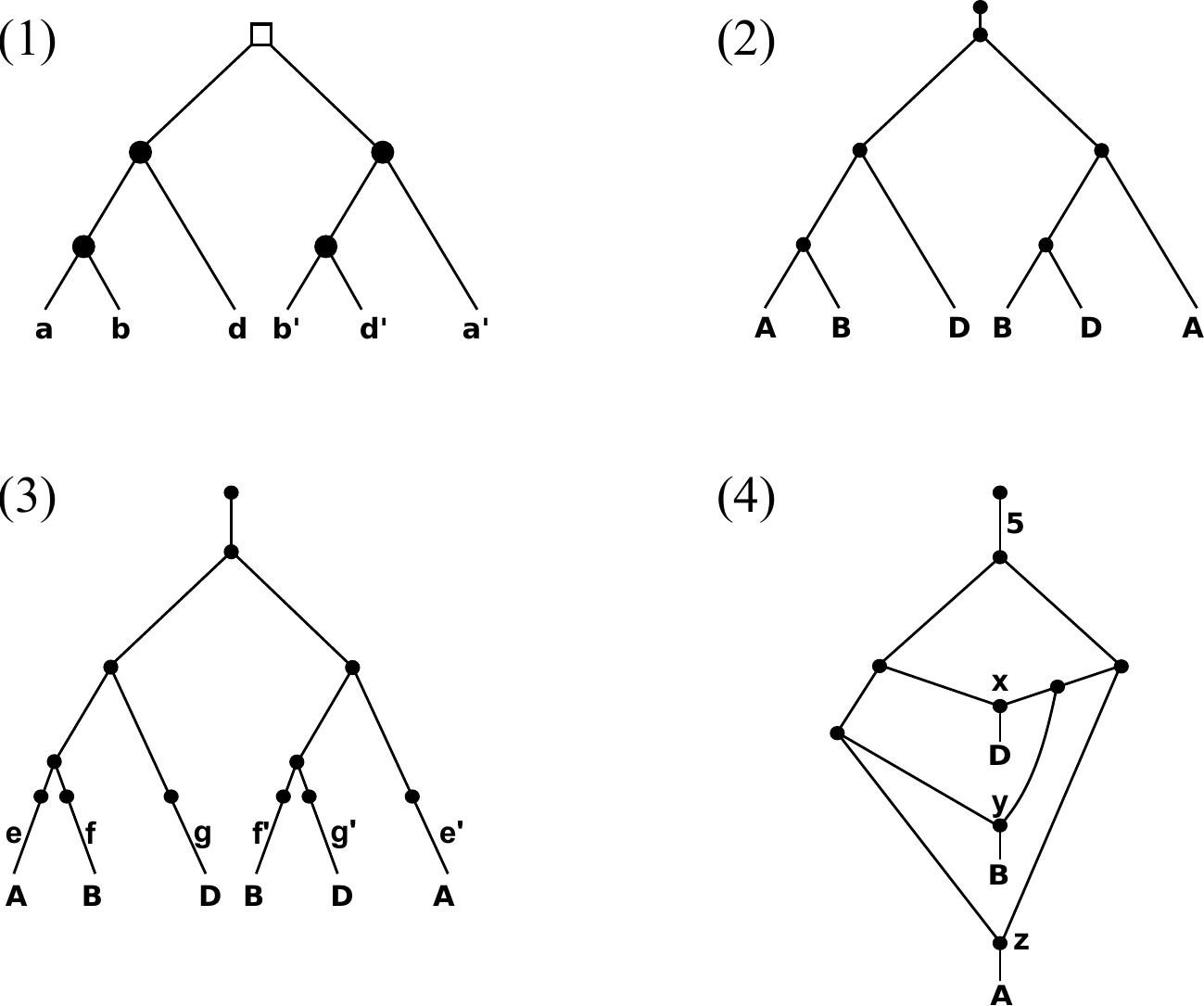}
  \end{center}
	\caption{
	 In Panel (1), we depict an event-labeled gene tree $(T;t,\sigma)$.
		\rev{Speciation and duplication events 
	are represented as $\bullet$ and $\square$, respectively. }  
	The corresponding MUL-tree $(M(T;t,\sigma),\chi)$ 
  and the simple subdivision  $(M',\chi)$ of $(M(T;t,\sigma),\chi)$
	are shown in Panels (2) and (3), respectively.
   As shown in the proof of Lemma \ref{lem:MULfoldN}, 
	$(M',\chi)$ can be folded onto $N$ (Panel (4)). Here, $N$ is obtained from $(M',\chi)$ by identifying 
	the arcs $e$ and $e'$ that yield the arc $(z,A)$ in $N$, 
	the arcs $f$ and $f'$ that yield the arc $(y,B)$ in $N$, and 
	the arcs $g$ and $g'$ that yield the arc $(x,D)$ in $N$.  
	By Definition~\ref{def:MUL2N} and since $M'$ is a subdivision of $M(T;t,\sigma)$, 
	the MUL-tree $(M(T;t,\sigma),\chi)$ can be folded onto $N$. }
		\label{fig:MULsimple-foldN}
\end{figure}

We are now in the position to establish the main result of this section.

\begin{theorem}\label{thm:composed}
Given a MUL-reconciliation map $\kappa$ from an
event-labeled gene tree $(T\rev{=(V,E)};t,\sigma)$ to a pseudo MUL-tree $(M,\chi)$, 
and a folding map $f$ from $(M\rev{=(D,U)},\chi)$ to a  network $N\rev{=(W,F)}$.
Then, the map $\mu_{\kappa,f} \colon V \to W \cup F$ defined by putting  
for every $v\in V$
\[\mu_{\kappa,f}(v) = \begin{cases}
	\fV(\kappa(v)) & \text{if } \kappa(v) \in D, \\
	\fE(\kappa(v)) & \, \text{\rev{otherwise},} 
\end{cases}\]
is a \rev{TreeNet-reconciliation map from} $(T;t,\sigma)$  and $N$.
\end{theorem}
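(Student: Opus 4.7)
The plan is to verify Properties (R1), (R2.i), (R2.ii), and (R3) of Definition~\ref{def:mu} for $\mu := \mu_{\kappa,f}$, using (M1)--(M3) for $\kappa$ and (F1)--(F3) for $f$, together with Lemma~\ref{lem:MUL-ancestor} for the ancestor constraint.

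First, (R1) follows pointwise from (M1) and (F2): for $x\in \Gen$, (M1) gives $\kappa(x)\in L(M)\cap \chi(\sigma(x))$, whence (F2) yields $\mu(x)=\fV(\kappa(x))=\sigma(x)$. Property (R2.ii) is immediate from (M2.ii): if $t(x)=\squ$, then $\kappa(x)\in U$, so $\mu(x)=\fE(\kappa(x))\in F$ by construction. Property (R3) is exactly where Lemma~\ref{lem:MUL-ancestor} does the work. Given $x\prec_T y$ in $T$, (M3) yields $\kappa(x)\preceq_M \kappa(y)$, with strict inequality whenever at least one of $t(x),t(y)$ is $\bul$ (or when $x\in \Gen$). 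Since in the strict case at least one of $\kappa(x),\kappa(y)$ lies in $D$ rather than $U$, the second clause of Lemma~\ref{lem:MUL-ancestor} delivers $\mu(x)\prec_N \mu(y)$; in the weak case (both duplications) the lemma gives $\mu(x)\preceq_N \mu(y)$. This matches both cases of (R3).

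The interesting case is (R2.i). Let $x\in V^0$ with $t(x)=\bul$ and children $x_1,\dots,x_k$. Property (M2.i) says that $\kappa(x)\in D\setminus L(M)$, so $\mu(x)=\fV(\kappa(x))\in W$, and that there exist $i\neq j$ together with directed paths $P_i,P_j$ in $M$ from $\kappa(x)$ to $\kappa(x_i),\kappa(x_j)$ whose initial arcs $a_i,a_j$ are incomparable in $M$; since $M$ is a tree (hence has no multi-arcs) this forces $a_i,a_j$ to share tail $\kappa(x)$ and have distinct heads. I would then push $P_i,P_j$ through $f$: by (F1), the sequence of vertex-images and arc-images along each path is a legitimate directed walk in $N$ starting at $\mu(x)$ and ending at $\mu(x_i)$ if $\kappa(x_i)\in D$, or at $h_N(\mu(x_i))$ if $\kappa(x_i)\in U$ (which is exactly the endpoint relevant to the arc extension $Q_N(\cdot,e)$). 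The decisive step is (F3): as $a_i,a_j$ are distinct arcs of $M$ both with tail $\kappa(x)$, their $\fE$-images must differ, since otherwise each of $a_i,a_j$ would be \emph{the} unique lifting of their common image at $\kappa(x)$, contradicting the uniqueness clause of (F3). Consequently the two image walks leave $\mu(x)$ through distinct arcs, witnessing $\mu(x)\in Q^2_N(\mu(x_1),\dots,\mu(x_k))$.

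The main obstacle is the bookkeeping in (R2.i): one must carefully relate the final vertex of the image of $P_i$ to the head of the arc $\mu(x_i)$ when $\kappa(x_i)\in U$, and when $\kappa(x_i),\kappa(x_j)$ happen to fold to a common vertex of $N$ the ``different child'' aspect of separation must be read in the generalised sense allowed by the remarks following Definition~\ref{def:mu} (which explicitly permit $\mu(x)\in Q^2_N$ even when the images $\mu(x_1),\dots,\mu(x_k)$ coincide, precisely because $N$ may carry distinct paths). Once (F3) is used to supply distinct first arcs, the rest amounts to unpacking the definitions of $Q_N$ and its arc-extension.
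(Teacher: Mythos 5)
Your proposal is correct and follows essentially the same route as the paper's proof: (R1), (R2.ii) and (R3) are verified directly from (M1), (M2.ii), (M3) together with (F2) and Lemma~\ref{lem:MUL-ancestor}, and for (R2.i) the key step is identical --- using (F1) to see that the two incomparable initial arcs of $P_i$ and $P_j$ map to arcs of $N$ sharing the tail $\fV(\kappa(x))$, and then invoking the uniqueness clause of (F3) to rule out their having a common image, which yields the required separation of $\mu(x_i)$ and $\mu(x_j)$ by $\mu(x)$.
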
 
\begin{proof} 
We need to show that $\mu\coloneqq \mu_{\kappa,f}$ satisfies Properties (R1) -- (R3).

To see Property (R1), let $x\in \Gen$.  
Since Property (M1) is satisfied for $\kappa$ we
have $\kappa(x) \in L(M)$ and $\kappa(x) \in \chi(\sigma(x))$.
By Property (F2) and \rev{the} construction of $\mu$, we have  
$\mu(x) = \fV(\kappa(x)) = \sigma(x)$. Thus, Property (R1) holds.

To see Property (R2.i), let $x \in V$ be a vertex with  $t(x)= \bul$ and children $x_1,\dots,x_k$, $k\geq2$. 
We need to show that $\mu(x) \in Q^2_N(\mu(x_1),\dots,\mu(x_k))$.
By (M2.i),  there exists a directed path $P_i$ from $\kappa(x)$ to
$\kappa(x_i)$ in $M$ and a directed path $P_j$ from $\kappa(x)$ to
$\kappa(x_j)$ in $M$ for two distinct $i,j\in \{1,\dots,k\}$ such that 
in $M$ the	first arc on $P_i$ is incomparable with
the first arc on  $P_j$.
Let $a_i$ and $a_j$ be the first arc on $P_i$ and $P_j$, respectively. 
Hence, $t_M(a_i) = t_M(a_j) = \kappa(x)$ and, since $f$ is a folding map,  
Property (F1) implies that 
$t_N(\fE(a_i)) = t_N(\fE(a_j)) = \fV(\kappa(x))$.
The latter together with (F1) implies that  $a_i$ and $a_j$ are mapped in $N$
to either the same arc or to two distinct arcs that share the same tail. 

Assume first that $\fE(a_i) = \fE(a_j) = e$ and put $v=\kappa(x)$. 
Then, $\fV(v) = t_N(e)$ and 
$\widetilde{a}\in \{a_i,a_j\}$ is an arc in $M$ that satisfies 
$\fE(\widetilde{a}) = e$ and $t_M(\widetilde{a}) = v$; contradicting Property (F3). 
Therefore,  $a_i$ and $a_j$ must be mapped in $N$
to two distinct arcs $e_i$ and $e_j$, respectively, that share the same tail.
Since, $e_i\neq e_j$ and $t_N(e_i) = t_N(e_j)= \fV(\kappa(x))$, the 
arcs $e_i$ and $e_j$ are incomparable in $N$.
Combined  with Property~(M3) and Lemma~\ref{lem:MUL-ancestor} it follows that 
there is a path $P'_i$ from $\mu(x) $ to $\mu(x_i)$ in $N$ that contains $e_i$ and 
a path $P'_j$ from $\mu(x) $ to $\mu(x_j)$ in $N$ that contains $e_j$. 
Thus, $\mu(x_i)$ and $\mu(x_j)$ are separated by $\mu(x)$. 
Therefore, $\mu(x)\in Q^2_N(\mu(x_1), \dots, \mu(x_k))$.

Clearly, Property~(R2.ii) follows from the fact that $\kappa$ satisfies (M2.ii), 
that $\fE$ maps an arc of $M$  to an arc of $N$  and  the construction of $\mu$.

It remains to show that Property~(R3) is satisfied. 
Suppose that $x,y\in V$ with $x \prec_T  y$. Clearly,  $x\neq y$. 
If  $t(x)=t(y)=\squ$, then $\kappa(x) \preceq_M \kappa(y)$ in view of Property (M3.i).  
Lemma~\ref{lem:MUL-ancestor} implies that $\mu(x) \preceq_N \mu(y)$. 
Thus, \rev{Property~}(R3.i) is satisfied. 
Now assume that at least one of $t(x)$ or $t(y)$ is a speciation \rev{vertex}. 
Property~(M3.ii) implies that $\kappa(x) \prec_M \kappa(y)$. 
Note that not both of $\kappa(x) $ and $ \kappa(y)$ 
can be contained in $U$. Again, 
Lemma \ref{lem:MUL-ancestor} implies that 
$\mu(x) \prec_N \mu(y)$. Therefore, (R3.ii) is also satisfied by $\mu$. 

In summary, it follows that $\mu$ is a \rev{TreeNet-}reconciliation
map from $(T;t,\sigma)$ to $N$. 
\qed \end{proof}

\begin{corollary}
	\rev{Given any event-labeled gene tree $(T;t,\sigma)$,  there exists} a 
	species network $N$  for $(T;t,\sigma)$.
	\rev{Moreover}, the network $N$ as well as the \rev{TreeNet-}reconciliation 
	map from $(T;t,\sigma)$	to $N$ can be constructed in polynomial time. 
	\label{cor:Nexists}
\end{corollary}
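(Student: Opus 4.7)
The plan is to chain together the machinery built up over the previous two sections, with no genuinely new combinatorial work required. Given $(T;t,\sigma)$, I would proceed in four steps.

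First, I would construct the MUL-tree $(M(T;t,\sigma),\chi)$ from Definition~\ref{def:assoMUL} and its associated trivial map $\kappa_{(T;t,\sigma)}$; by Lemma~\ref{trivial} this is already a MUL-reconciliation from $(T;t,\sigma)$ to $(M(T;t,\sigma),\chi)$. Second, I would invoke Lemma~\ref{lem:MULfoldN} on $(M(T;t,\sigma),\chi)$ to obtain a species network $N$ together with a folding map $f=(\fV,\fE)$ from the simple subdivision $(M',\chi)$ of $(M(T;t,\sigma),\chi)$ onto $N$.

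Third, since $M'$ is a subdivision of $M(T;t,\sigma)$, I would apply Lemma~\ref{lem:reconcMUL-pseudoMUL} to lift $\kappa_{(T;t,\sigma)}$ to a MUL-reconciliation $\kappa'$ from $(T;t,\sigma)$ to $(M',\chi)$. At this stage the hypotheses of Theorem~\ref{thm:composed} are met, with $\kappa'$ playing the role of the MUL-reconciliation and $f$ the folding map, so $\mu_{\kappa',f}$ is a TreeNet-reconciliation from $(T;t,\sigma)$ to $N$. This establishes existence, and $N$ is a species network on $\Spe$ by Lemma~\ref{lem:NisNetwork}.

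For the polynomial-time claim, each ingredient is already known to be constructible efficiently: $(M(T;t,\sigma),\chi)$ and $\kappa_{(T;t,\sigma)}$ are immediate from the definition and linear in $|V(T)|$; the subdivision $M'$, the network $N$, and the folding $f$ are polynomial-time constructible by Lemma~\ref{lem:MULfoldN}; lifting $\kappa_{(T;t,\sigma)}$ to $\kappa'$ via the recipe in the proof of Lemma~\ref{lem:reconcMUL-pseudoMUL} only requires replacing each arc-valued image by the last arc of its subdividing path, which is again linear; and the composition defining $\mu_{\kappa',f}$ is a vertex-by-vertex lookup. I do not foresee any real obstacle: all of the structural content sits in Theorem~\ref{thm:composed} and Lemma~\ref{lem:MULfoldN}, and the corollary is essentially just the observation that they can be composed with the trivial MUL-reconciliation of Lemma~\ref{trivial}.
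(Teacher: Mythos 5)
Your proposal is correct and follows exactly the same route as the paper's own proof: trivial MUL-reconciliation (Lemma~\ref{trivial}), folding of the simple subdivision (Lemma~\ref{lem:MULfoldN}), lifting to the subdivision (Lemma~\ref{lem:reconcMUL-pseudoMUL}), and composition via Theorem~\ref{thm:composed}, with the polynomial-time claim inherited from the constituent constructions. Nothing is missing.
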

\begin{proof} 
Let $\kappa_{(T;t,\sigma)}$ be the trivial MUL-reconciliation from  $(T;t,\sigma)$
to $(M,\chi)$ with $M\coloneqq M(T;t,\sigma)$ (which is a reconciliation map by Lemma~\ref{trivial}).
Moreover, let $(M',\chi)$ be the simple subdivision of $(M,\chi)$.
\rev{Let} $N$ be the network as constructed
in the proof of Lemma \ref{lem:MULfoldN}
and \rev{let} $f$ \rev{denote the underlying}
folding map from $M'$ to $N$. 
Lemma \ref{lem:reconcMUL-pseudoMUL} implies that there
is a reconciliation map $\kappa'$ from   $(T;t,\sigma)$ to $(M',\chi)$. 
Then, by Theorem \ref{thm:composed}, the composed reconciliation map $\mu_{\kappa',f}$ is a 
reconciliation map from  $(T;t,\sigma)$ to $N$. 
Hence, $N$ is a	species network  for $(T;t,\sigma)$.

Finally, Lemma \ref{lem:MULfoldN} implies that the network $N$ and the folding map 
$f$ can be constructed in polynomial time. Furthermore, the construction of $\kappa$
and thus, of $\kappa'$ as well as of $\mu_{\kappa',f}$ can be done in polynomial time. 
This proves the second part of the corollary.
\qed \end{proof}

\section{Existence of Reconciliations for Multi-Arc Free Networks}
\label{sec:existence-multiarcFree}

In the last section, we showed that  every event-labeled gene tree 
can be reconciled with some network. An important assumption in
this result is that the network is permitted to contain multi-arcs. Although not unreasonable, 
in practice (and in much of the literature on networks) it can be desirable to 
restrict attention to networks which do not have multi-arcs.
\rev{\begin{defi}
	An event-labeled gene tree $(T;t,\sigma)$ is \emph{well-behaved}
	if given any $v\in V$ with $t(v)=\bul$, for every child $v'$ of $v$ in $T$
	there is another child $v''$ of $v$ in $T$  with $\sigma(L_T(v'))\neq \sigma(L_T(v''))$.
\label{def:well-behaved}
\end{defi}
Equivalently, an event-labeled gene tree $(T;t,\sigma)$ is well-behaved
if for all speciation vertices $v$ in $T$, 
$\sigma(L_T(v_1))= \sigma(L_T(v_2))= \dots = \sigma(L_T(v_k))$ does not hold for
$v_1,v_2\dots v_k$, $k\geq 2$, the children of $v$.
The latter is a reasonable and a quite weak restriction as in many applications 
the stronger condition $\sigma(L_T(v'))\cap \sigma(L_T(v'')) = \emptyset$
for any two distinct children $v'$ and $v''$ of a speciation vertex is (at least implicitly) 
required; see e.g. \cite{Lafond2014,Hellmuth2017,LDEM:16,Tofigh2011,BLZ:00}. }
In this section, we show that for every \rev{well-behaved} event-labeled gene tree $(T;t,\sigma)$
there is a TreeNet-reconciliation map to some network $N$ without multi-arcs and that, in addition, 
$N$ displays all triples in $\mc{S}(T;t,\sigma)$.

To this end, we will first take the folding of the simple
subdivision $(M',\chi)$ of
$(M(T;t,\sigma),\chi)$ to obtain a network $N$, as specified in 
Definition \ref{def:MULfoldN} and the proof of Lemma \ref{lem:MULfoldN}. However, this network may
contain multi-arcs
(see Fig.\ \ref{fig:MULsimple-foldN-2}). 
To adjust for this, one may be tempted to simply remove multi-arcs
and subsequently 
suppress 
degree two vertices. 
This, however, can be problematic as it may result  
in a network $N'$ for which no folding map from $M'$ to $N'$ exists. 
For example, consider the network $N$ depicted 
in Fig.\ \ref{fig:MULsimple-foldN-2} (right). 
The removal of one of the arcs
between $y$ and $b$, yields
	 a graph which is not a network (since then vertex $y$ has
	 in- and outdegree one). Additional
         suppression of $y$ results in a network $N'$, for
         which no folding map from 
	 the pseudo MUL-tree $M'$ pictured in
         Fig.\ \ref{fig:MULsimple-foldN-2} (center) exists, 
         since Property (F1) is violated for the resulting arc in $N$
         and vertex $b$ of $M'$.
Since there is no folding map from $(M',\chi)$ to $N'$, 
we can cannot apply  Theorem \ref{thm:composed} to conclude that
there is a TreeNet-reconciliation map from 
$(T;t,\sigma)$ to $N'$ (if there is \rev{one}). 

In order to obtain a multi-arc free network from $N$ for which there is a
TreeNet-reconciliation from $(T;t,\sigma)$, we next provide an alternative 
approach. For this, we need to define two sets which will
turn out to be helpful in the construction of
multi-arc free networks from networks that may contain  multi-arcs.

\begin{defi}
	Let $N=(W,F)$ be a network on $\Spe$. 
	For each $x\in \Spe$, we  denote by $\W^N_x$ the
        inclusion-maximal subset of vertices of $W$
	that comprises all vertices $v\in W$ that satisfy
        $x\prec_{N} v$ and	 $L_N(v) = \{x\}$. 

	Moreover,  we define $\V^N_x\subseteq W$ as the set
        of all vertices $z\in W$ with $z\notin \W^{N}_x$
	and there is an arc $(z,w)$ in $N$ with $w\in \W^{N}_x$. 	
\end{defi}

To illustrate theses two sets consider the bottom left network $N$  depicted in
Fig.~\ref{fig:multiarc-free}. Then $\W^N_x$ consists
of all vertices $v$ in $N$ that are highlighted by
colored ``{\large$\mathbf{\star}$}'' and that satisfy
$v\succ_N x$ for the particular leaf $x\in \Spe$. 
Thus, the set $W_1 = W\setminus \cup_{x\in\Spe'} (\W^N_x \cup \{x\})$
where $\Spe'$ denotes the set of all $x\in \Spe$ with $\W^N_x \neq \emptyset$
						is the set $W_1=\{ 1,2,3,4\}$.

\begin{lemma}\label{lem:new}
		Let $N=(W,F)$ be a network on $\Spe$. 	
	Then, the following statements are satisfied.
      \begin{description}
          \item[(i)] $x\not\in \W^N_x$, for all $x\in \Spe$.
          \item[(ii)]  For all $x\in \Spe$, if $v\in \W^N_x$,
            then every vertex $u\in W$ with
        		$u\prec_N v$ and $u\neq x$
			      must be contained in $\W^N_x$.
                            \item[(iii)] There are no arcs between $\W^N_x$ and $\W^N_y$
                              and $\W^N_x\cap \W^N_y = \emptyset$, for all $x,y\in \Spe$ distinct.
          \item[(iv)] If $\W^N_x \neq \emptyset$, then $\V^N_x\neq \emptyset$
        \end{description}
\end{lemma}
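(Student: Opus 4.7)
The plan is to establish each of the four statements in turn, with most following directly from the definitions of $\W^N_x$ and $\V^N_x$ together with the fact that $L_N(\cdot)$ is monotone along arcs in the DAG $N$.

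Statement (i) is immediate: by definition, any $v\in \W^N_x$ satisfies $x\prec_N v$ and hence $v\neq x$. For (ii), suppose $v\in\W^N_x$ and $u\prec_N v$ with $u\neq x$. Since every directed path below $v$ ends in a leaf of $L_N(v)=\{x\}$, any leaf reachable from $u$ lies in $L_N(v)$, so $L_N(u)\subseteq\{x\}$. Since $u$ is a vertex of $N$ we have $L_N(u)\neq\emptyset$, hence $L_N(u)=\{x\}$; in particular, $x\preceq_N u$, and combined with $u\neq x$ we obtain $x\prec_N u$. Both defining conditions of $\W^N_x$ are therefore met, so $u\in\W^N_x$.

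For (iii), fix distinct $x,y\in\Spe$. Disjointness of $\W^N_x$ and $\W^N_y$ is clear, since any $v$ in the intersection would satisfy $L_N(v)=\{x\}=\{y\}$. For the non-existence of arcs, consider a hypothetical arc $(v,w)$ in $N$. If $v\in\W^N_x$ and $w\in\W^N_y$, then $w\prec_N v$ forces $L_N(w)\subseteq L_N(v)=\{x\}$, so $\{y\}=L_N(w)\subseteq\{x\}$, contradicting $x\neq y$. The same monotonicity argument rules out the symmetric case, handling both orientations.

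Finally, for (iv), assume $\W^N_x\neq\emptyset$ and pick any $v\in\W^N_x$. Since $|\Spe|\geq 2$, we have $L_N(\rho_N)=\Spe\supsetneq\{x\}$, so $\rho_N\notin\W^N_x$. Now take any directed path from $\rho_N$ to $v$ and traverse it: the starting vertex lies outside $\W^N_x$ while the terminal vertex lies inside, so there must be two consecutive vertices $z,w$ on the path with $z\notin\W^N_x$ and $w\in\W^N_x$. Because $x$ is a leaf it has no outgoing arc, so $z\neq x$, and hence $z\in W$ witnesses $\V^N_x\neq\emptyset$. The only mildly delicate step is (ii), where one has to rule out the corner case that $u$ is a leaf different from $x$; this is dispatched by noting $L_N(u)=\{u\}\subseteq L_N(v)=\{x\}$ would force $u=x$, so the hypothesis $u\neq x$ leaves only the case that $u$ is an inner vertex, in which the argument above applies verbatim.
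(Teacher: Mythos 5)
Your proof is correct and follows essentially the same route as the paper's: all four parts are derived directly from the definitions together with the monotonicity of $L_N(\cdot)$ along directed paths, with only cosmetic differences (you argue (ii) and (iv) directly where the paper argues by contradiction, and you prove the no-arcs claim in (iii) from monotonicity rather than as a corollary of (ii)). No gaps.
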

\begin{proof}
	Property (i) is trivially satisfied, since $x\not\prec_{N} x$.

  For Property (ii), assume for contradiction that there exists some $v\in
  \W^N_x$ and some vertex $u\in W$ with $u\prec_N v$ and $u\neq x$ that is not
  contained in $\W^N_x$. Then $L_{N}(u) \neq \{x\}$ and since $v\succ_N u$ also
  $L_{N}(v) \neq \{x\}$; a contradiction. 
   
	We continue with Property (iii). In view of Lemma~\ref{lem:new}(ii), there are
	no arcs $(u,v)\in F$ with $u\in \W^N_x$ and $v\notin \W^N_x$ and thus, in
	particular, no arcs $(u,v)\in F$ with $u\in \W^N_x$ and $v\in \W^N_y$ for all
	distinct $x,y\in \Spe$. By construction, $\W^N_x\cap \W^N_y = \emptyset$,
  for all distinct $x,y\in \Spe$.

	For Property (iv), assume for contradiction that $\W^N_x \neq \emptyset$ but
	$\V^N_x = \emptyset$. Hence, there is no arc $(z,w)\in F$ with $z\notin
	\W^{N}_x$ and $w\in \W^{N}_x$. Thus, $\rho_N\in \W^N_x$; a contradiction to
	$\rho_N\succ_N z$ for all $z\in \Spe$ and $|\Spe|>1$.  
\qed
\end{proof}

\begin{figure}[tbp]
  \begin{center}
    \includegraphics[width=.8\textwidth]{./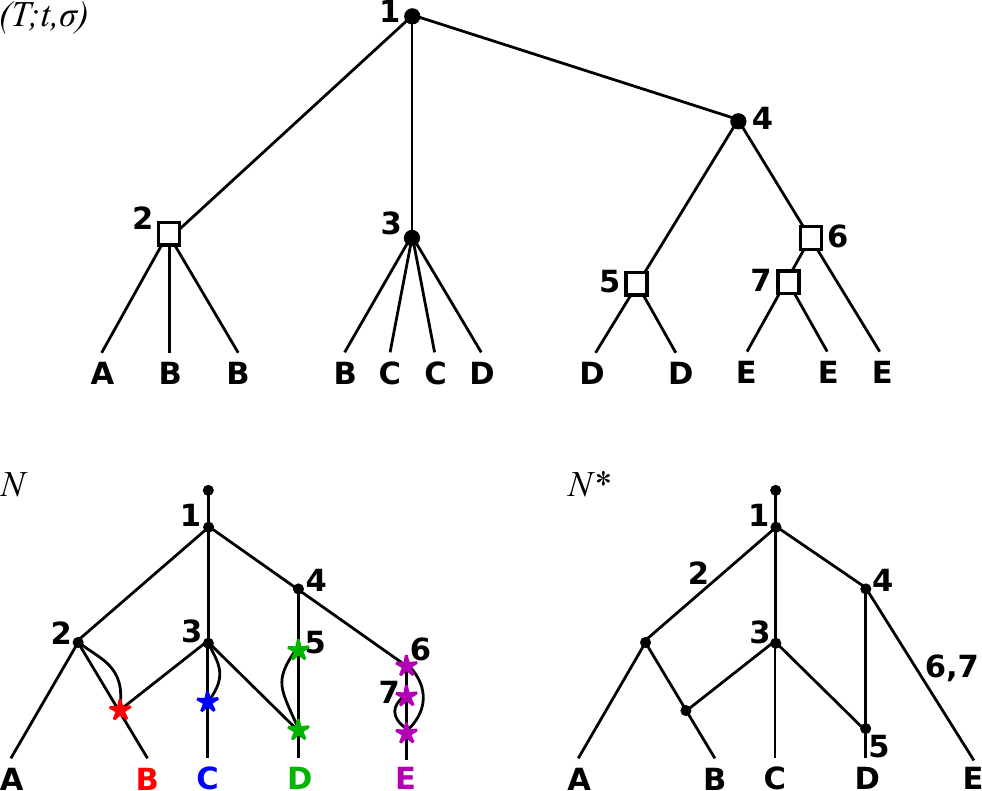}
  \end{center}
  \caption{
In the upper part, an event-labeled gene tree $(T ;t, \sigma )$ is shown where, for simplicity, 
all leaf labels $v\in \Gen$ are replaced by $\sigma(v)\in \Spe =\{A,B,C,D,E\}$.
	\rev{Speciation and duplication events 
	are represented as $\bullet$ and $\square$, respectively. } 
      In the lower-left part, the network $N=(W,F)$
      obtained from the simple subdivision $(M',\chi)$
      of $(M(T ;t, \sigma); \chi)$ as in  Definition \ref{def:MULfoldN}
      (ignoring the ``{\large$\mathbf{\star}$}''-labels 
			of the non-leaf vertices).	
In the lower-right part, the network $N^*$ constructed from $N$ 
as specified in Definition~\ref{def:Nstar}.
Using the vertex labels in $(T;t,\sigma)$, we indicate 
the TreeNet-reconciliation map $\mu^*$ from $(T;t,\sigma)$ to $N^*$
as specified in the proof of Proposition~\ref{prop:reconcNstar}
in terms of \rev{the} $\mu^*$-images on the respective vertices and arcs in $N^*$.
}
	\label{fig:multiarc-free}
\end{figure}

In what follows, we wish to modify a network $N$ with multi-arcs to a
network $N^*$ without multi-arcs.  
To this end, 	we will replace entire subgraphs of $N$ 
by specified arcs or vertices which eventually leads to the multi-arc free network $N^*$.	
We give a formal description of the approach to first construct a DAG
$N^*$ from a given  network $N$. As we shall see in Proposition \ref{prop:multi-arcFree}, 
this DAG $N^*$ is indeed a multi-arc free network.

\begin{defi}
	Let $N=(W,F)$ be a network on $\Spe$. 
	The DAG $N^*$ is obtained from $N$ as follows:
	\begin{description}
	\item  First, for all $x\in \Spe$  with $\W^{N}_x\neq \emptyset$ 
	and $|\V^{N}_x|=1$, remove all vertices in $\W^{N}_x$ and all arcs in $F$ incident to 
	vertices in $\W^{N}_x$ from $N$ and add the arc $(z,x)$ to $N$ with $z\in\V^{N}_x$. 
	\item  Second, for all $x\in \Spe$  with $\W^{N}_x\neq \emptyset$ and  $|\V^{N}_x|>1$, remove all 
				vertices in $\W^{N}_x$ and all arcs in $F$ incident to vertices in $\W^{N}_x$ from $N$
				and add  a new vertex $w_x$ and one arc $(z,w_x)$ for all $z\in \V^{N}_x$  and 
				the arc $(w_x,x)$ to $N$.
	\end{description}
	\label{def:Nstar}
\end{defi}

The sets $\W^N_x$ and $\V^N_x$ as well as the construction of $N^*$ 
are illustrated in Fig.\ \ref{fig:multiarc-free}.

\begin{proposition}
 Let $(T;t,\sigma)$ be an event-labeled gene tree on $\Gen$ and let $N$ be a
 species network on $\Spe$ obtained from the simple subdivision of the MUL-tree
 $(M(T;t,\sigma),\chi)$ as in Definition \ref{def:MULfoldN}.
\rev{Then $N^*$ obtained from $N$ by Def.\ \ref{def:Nstar} 
		 is a multi-arc free species network on $\Spe$ 
		 and the construction of $N^*$ can be done in polynomial time. 
} 
 \label{prop:multi-arcFree}
\end{proposition}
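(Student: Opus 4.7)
The plan is to prove the proposition by establishing three properties of $N^*$ in turn: that $N^*$ is multi-arc free, that $N^*$ satisfies the network axioms (N1)--(N3), and that the construction is polynomial. Throughout, I would exploit the explicit structure of $N$ supplied by Definition~\ref{def:MULfoldN} and use Lemma~\ref{lem:new} to control the sets $\W^N_x$ and $\V^N_x$.

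First I would pinpoint precisely where multi-arcs can occur in $N$. Inspecting the identification step in the proof of Lemma~\ref{lem:MULfoldN}, any multi-arc in $N$ must come from two arcs $(u,v_{e_1}), (u,v_{e_2})\in U'$ with common tail $u$ whose subdivision vertices $v_{e_1},v_{e_2}$ both get merged into $\parent(x)$ for a single species $x\in\Spe$. Since $L_N(\parent(x))=\{x\}$ and $\parent(x)\succ_N x$, every such $\parent(x)$ lies in $\W^N_x$. Hence every multi-arc of $N$ has its head inside some $\W^N_x$, and the construction in Definition~\ref{def:Nstar} deletes all of $\W^N_x$, eliminating every old multi-arc. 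The newly added arcs (either $(z,x)$ with the unique $z\in\V^N_x$, or $(z,w_x)$ for each $z\in\V^N_x$ together with $(w_x,x)$) are pairwise distinct by construction and cannot form multi-arcs with each other or with the surviving arcs of $N$: the $w_x$ are fresh vertices, the $(z,w_x)$ leave distinct sources, and $(w_x,x)$ is unique. Hence $N^*$ is multi-arc free.

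Next I would verify the network axioms. For (N2), the leaf set is unchanged, each leaf has outdegree $0$, and each leaf $x$ has indegree exactly $1$ via either its original parent (if $\W^N_x=\emptyset$), the arc $(z,x)$ (if $|\V^N_x|=1$), or $(w_x,x)$ (if $|\V^N_x|>1$). For (N1), both $\rho_N$ and its unique child have leaf-set equal to $\Spe$, so (since $|\Spe|\geq 2$) by Lemma~\ref{lem:new}(i) neither lies in any $\W^N_x$, and the root/arc/child structure is preserved. The delicate axiom is (N3): each newly added $w_x$ is clearly a hybrid vertex (indegree $|\V^N_x|\geq 2$ and outdegree~$1$ towards $x$), but one must show that every preserved inner vertex $v$ retains the correct in/outdegree. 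Its indegree is unchanged since preserved vertices are never heads of the removed arcs (only $\W^N_x$-vertices are), and in particular $v$ cannot be a hybrid vertex because in $N$ the only hybrid vertices are the $\parent(x)$'s, which all lie in $\W^N_x$. The obstacle is the outdegree: multi-arcs from $v$ to a common $\parent(x)$ collapse to a single arc in $N^*$, which could reduce outdegree. The key step uses that $v$ being preserved forces $L_N(v)\neq\{x\}$ for every $x\in\Spe$, and then Lemma~\ref{lem:new}(ii) implies that $v$ must channel its descendants through at least two distinct outgoing targets in $N^*$ (two preserved children, a preserved child together with some collapse target, or two distinct collapse targets $x,y$); hence the outdegree of $v$ in $N^*$ is at least~$2$.

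Finally, $\W^N_x$ and $\V^N_x$ can be computed in polynomial time by standard descendant/ancestor traversals, and Lemma~\ref{lem:MULfoldN} already yields $N$ in polynomial time, so the whole construction of $N^*$ is polynomial. The main obstacle I anticipate is the outdegree argument in (N3) above: a careful case analysis on the children of a preserved vertex $v$, together with the exact meaning of ``being in $\W^N_x$'', is required to conclude that collapsing multi-arcs cannot reduce the outdegree below $2$. Every other step is essentially bookkeeping once the correct description of multi-arcs in $N$ (namely, that they always end in some $\W^N_x$) has been established.
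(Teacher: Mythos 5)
Your proof is correct and follows essentially the same route as the paper: both hinge on the observation that every multi-arc of $N$ has its head at some $\parent(x)\in\W^N_x$, both use Lemma~\ref{lem:new} to control the boundary sets $\V^N_x$, and both reduce the verification of (N1)--(N3) to a degree analysis of the vertices adjacent to the removed sets. The only notable difference is organizational: the paper modifies $N$ one species at a time (first all species with $|\V^N_x|=1$, then all with $|\V^N_x|>1$), checking that each intermediate digraph remains a network with the unprocessed sets $\W^N_a$ unchanged, whereas you verify the final $N^*$ directly and derive the outdegree bound for a preserved inner vertex $v$ from $|L_N(v)|\geq 2$ rather than from ``one surviving child outside $\W^N_x$ plus the newly added arc.''
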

\begin{proof}
  In what follows, let $T=(V,E)$, $N=(W,F)$ and 
	 $M(T;t,\sigma) = (D,U)$.
	To help keep notation at bay, we assume for simplicity that
  $V = D\setminus (\{\rho_{M(T;t,\sigma)}\}\cup \Gen)$.

	Now, we construct the DAG $N^*$ from $N$ as in
        Definition~\ref{def:Nstar}. 
	Clearly, $N^*$ has leaf set $\Spe$. 
	To show that $N^*$ is a multi-arc free network, we first 
	analyze for some $x\in \Spe$ the sets $\W^N_x$ and $\V^{N}_x$. Note, if 
 $\W^N_x \neq \emptyset$, then Lemma \ref{lem:new}(iii) implies that $\V^{N}_x\neq \emptyset$, 
	Thus, we examine the following three mutually exclusive cases:
  \begin{description}
		\item[(I)] 	$\W^N_x = \emptyset$,	
		\item[(II)] $\W^N_x \neq \emptyset$ and $|\V^{N}_x|=1$, or
		\item[(III)] $\W^N_x \neq \emptyset$  and $|\V^{N}_x|>1$.
	\end{description}

   We interrupt the proof of the proof of the proposition to
   illustrate these cases by means of the bottom left network in
   Fig.\ \ref{fig:multiarc-free}. Then the
   set $\W^N_A = \emptyset$ satisfies Case (I).
   For $B,D\in \Spe$, we have $\W^N_B \neq \emptyset$ and
   $\V^N_B = \{2,3\}$ and 	$\W^N_D \neq \emptyset$ and
   $\V^N_D = \{3,4\}$ and hence, Case (III) is satisfied. For
   $C,E\in \Spe$, we have  $\W^N_C \neq \emptyset$ and $\V^N_C = \{3\}$ and 
$\W^N_E\neq \emptyset$ and $\V^N_E = \{4\}$ and hence, Case (II) is satisfied.
   
   We continue with the proof of the proposition by
   taking a closer look at potential multi-arcs in $N$. 
	By construction of $N$ and since $(T;t,\sigma)$ has no multi-arcs, 	
 	there are exactly $\ell\geq 2$ multi-arcs between some vertices $u$ and $v$ in $N$ with $v\prec_N u$
	if and only if there are  exactly $\ell$ arcs $e_1,\dots,e_{\ell}$ incident to $u$ in $(T;t,\sigma)$ with $\sigma(h_T(e_i)) = x$, $1\leq i \leq \ell$.
	Put differently, if there are multi-arcs between two vertices $u$ and $v$ in $N$ with
	$v\prec_N u$, then, we have in $N$ that $v=\parent(x)$ for some
        $x\in\Spe$ and the outdegree of $\parent(x)$ is one. In particular,
        $\parent(x)\in
	\W^N_x$	which implies $\W^N_x\neq \emptyset$. 
	We summarize the latter observation for
	\begin{owndesc}
	\item[\textnormal{\em Case (I):}]
	 Let $x\in \Spe$ such that 	$\W^N_x = \emptyset$. Then the
	 construction of $N$ from $M(T;t,\sigma)$
         implies that there are no multi-arcs in $N$ between the (unique) parent $\parent(x)$ of $x$ 
	and any of the vertices $y$ that are parents of $\parent(x)$.
	Therefore, if $\W^N_x = \emptyset$ for all $x\in \Spe$, then $N$ is a network
	without multi-arcs. In this case, we put $N^*=N$.   
	\end{owndesc}

	\begin{owndesc}
	\item[\textnormal{\em Case (II):}]
	  Let $x\in \Spe$ such that
          $\W^{N}_x = \{w_1,\dots, w_n\}$, $n\geq 1$,
		and  $\V^{N}_x = \{z\}$. According to Definition~\ref{def:Nstar},
		we remove all vertices in $\W^{N}_x$ and all arcs in $F$ incident to vertices in $\W^{N}_x$ from $N$
		and add the arc $(z,x)$ to $N$ to obtain an acyclic digraph $N' = (W',F')$.
		By Lemma \ref{lem:new}(ii),  there is no arc $(w_i,u)$ in $N$ with
		$u\notin \W^{N}_x$ and $u\neq x$. Thus, the ancestor relationship between any vertices in $N$ that are still
		contained in $N'$ (and thus, not contained in  $\W^{N}_x$) is preserved. Put differently, 
		if $u,v\in W\cap W'$ with $u\preceq_{N} v$ then $u\preceq_{N'} v$. 

		We next show that $N'$ is a network and thus,
                satisfies Properties~(N1), (N2) and (N3). 
	  By construction, Property~(N1) is clearly satisfied for $N'$.
		Furthermore, Property~(N2) 
	  clearly holds for all $y\in \Spe\setminus\{x\}$. 
	  Since	 all vertices $\W^{N}_x$ have been removed and only the arc
          $e^x$          
	  has been added, vertex $x$ is an outdegree-0 vertex in $N'$.
		Therefore, $N'$ satisfies Property~(N2).

		To see Property~(N3), observe first that 
		the degrees of the vertices in $N'$ that are not
                incident to vertices in $\W^{N}_x$ remain the same as in $N$.
		Thus, $N'$ satisfies Property (N3) for all such vertices.
		Moreover, since all vertices $\W^{N}_x$ have
                been removed and only the arc $e^x$
		has been added, it remains to analyze the degree of
                the vertex $z:=h_N(e^x)$ in $N'$. In the context of this,
                we claim that $z$ is a tree vertex of $N'$.
		To see this, note that in $N$, vertex $z$ cannot
                have indegree greater than one, 
		as otherwise, the outdegree of $z$  must be one in $N$ and thus,
                $z\in \W^{N}_x$; a contradiction. 
		Thus, the indegree of $z$ is one in $N$ and, by construction, 
		the indegree of $z$ remains one in $N'$. Since $N$ satisfies (N3), 
		the outdegree of $z$ is greater than one in $N$. However,
                not all children of $z$ in $N$
		can be contained in $\W^{N}_x$ as otherwise, $z\in \W^{N}_x$; a contradiction. 
		Thus, there is a child $z'$ of $z$ with $z'\notin \W^{N}_x$. Since 
		the removal of $\W_N^x$ and the respective incident arcs as well as 
		the addition of the arc $e^x$ 
		does not affect any arc between $z$ and and its
                children $z'$ with $z'\not\in\W^{N}_x$, it follows
                that the arc(s) between $z$ and $z'$ remain in $N'$. 
    Thus, $z$ still has outdegree greater than one in $N'$. 
		Moreover, there is no vertex $w_i\in \W^N_x$ with $w_i\succ_N z$, as otherwise, 
		Lemma \ref{lem:new}(ii) implies that	$z\in \W^N_x$; a contradiction. 
		Hence, the unique arc  $(\parent(z),z)$ has not been removed and still
		exists in $N'$. Therefore, $z$ has indegree one in $N'$.
		Consequently, $z$ is a tree vertex in $N'$, as claimed.  
		In summary, $N'$  satisfies (N1)-(N3) and thus, remains a network. 
	
  	        By construction and the latter arguments, $x$ has exactly one
                parent $\parent(x) = z$
		in $N'$ and  $z$ has at least one child $z'$ with
                $z'\not\in\W^{N}_x$
		and this child $z'$ remains in $N'$. Therefore,
                $\W^{N'}_x=\emptyset$ in $N'$. 
		In addition, Lemma \ref{lem:new}(iii) implies that
                $\W^N_a\cap \W^N_b = \emptyset$
		and that there are no arcs between $\W^N_a$ and $\W^N_b$
                for all distinct $a,b\in \Spe$. 
		Therefore,  $\W^{N'}_a = \W^{N}_a$ for all
                $a\in \Spe\setminus \{x\}$ in $N'$. 
		Put differently, the sets $\W^{N}_a$ remain unchanged in $N'$ for all $a\in \Spe\setminus \{x\}$.
\smallskip

	\item[\textnormal{\em Case (III):}]
		Let	$x\in \Spe$ such $\W^{N}_x\neq \emptyset$ and  $|\V^{N}_x|>1$.
		According to Definition~\ref{def:Nstar},
		we remove all vertices in $\W^{N}_x$ and all arcs in $F$ incident to vertices in $\W^{N}_x$ from $N$
		and add a new vertex $w_x$ and one arc $(z,w_x)$ for all $z\in \V^{N}_x$  and 
		the arc $(w_x,x)$ to $N$ to obtain an acyclic digraph $N'$. 
		Note that, as in Case (II), the ancestor relationship
                between any two vertices in $N$ that are also
		contained in $N'$ is preserved.

		We show next that $N'$ is a network. By construction, Properties (N1) and (N2) are satisfied for $N'$. 
		It remains to show that $N'$ satisfies Property (N3). 
		Since all vertices $\W^{N}_x$ have been removed and new arcs have been added only
  	between vertices in $\V^{N}_x$, $w_x$ and $x$, we can conclude that 
		if $z$ is not adjacent with a vertex in $\W^{N}_x$, then the indegree and outdegree of 
		$z$ in $N$ is the same as the indegree and outdegree of $z$ in $N'$. 
		So assume that $z\in \V^{N}_x \cup \{w_x\}$. 
		
                Assume first that $z \in \V^{N}_x$. We claim again
                that $z$ is a tree vertex of $N'$.
		As observed in 	 Case (II),  any  vertex $z \in \V^{N}_x$ must have
		indegree one in $N$ and there is a child $z'$ with $z'\notin \W^{N}_x$. 
		The construction of $N'$ does not affect any arc between $z$ and $z'$. 
		Thus, every arc between $z$ and a child $z'\notin \W^{N}_x$ is also an arc in $N'$.
		Combined with the fact that, by construction, we have added the arc $(z,w_x)$, it follows that
		$z\in \V^{N}_x$ has outdegree greater than one in $N'$.
		Moreover, there is no vertex $w_i\in \W^N_x$ with $w_i\succ_N z$, as otherwise, Lemma \ref{lem:new}(ii) implies that 
		$z\in \W^N_x$; a contradiction. 
		Hence, the unique arc  $(\parent(z),z)$ has not been removed and is also an arc $N'$. 
		Therefore, $z$ has indegree one in $N'$. Thus, $z$ is a tree vertex in $N'$, as claimed. 
	
		Finally, assume that $z=w_x$. Then, by construction, the indegree of $z$ in $N'$ is $|\V^{N}_x|>1$ in $N'$
		and the unique child of $z$ is $x$. Thus, $z$ is a hybrid vertex in $N'$. 
		In summary, $N'$  satisfies (N1)-(N3) and thus, is a network.  		

		It is easy to see that $\W^{N'}_x = \{w_x\}$, where $w_x$ is not contained in any multi-arcs. 
		Note also that, as in Case (II), we have  $\W^{N}_a = \W^{N'}_a$ for all 
		$a\in \Spe\setminus \{x\}$.
	\end{owndesc}

In the latter construction, we modified $N$ in Cases~(II) and (III)
for a specific vertex $x\in \Spe$ to obtain a network $N'$. 

We complete the proof by associating a species network $N^*$ to $N$ as follows. 
Bearing in mind Case~(I), we first apply to all vertices $x\in \Spe$ for
which $\W^N_x \neq \emptyset$ and $|\V^{N}_x|=1$ holds, one after another, the
construction described in Case~(II). This yields a 
network $N'$ such that $\W^{N'}_x=\emptyset$ for all vertices $x$ in $N$
that satisfy Case~(II). Hence, all such ``Case (II)'' vertices of $N$ 
satisfy Case (I) in $N'$. Moreover, in each single modification step,
the sets $\W^{N'}_a$ have remained unchanged for all vertices
$a\in \Spe$ that have not been considered thus far. 
Applying, one after another, the construction described in
Case~(III) to all $x\in \Spe$ for which $x\in\W^{N'}_x\neq \emptyset$
and $|\V^{N'}_x|>1$ holds results in the digraph $N^*$ as constructed
in Definition~\ref{def:Nstar}. 
Arguing for all vertices $x$ that satisfy
the conditions of Case~(III) as in the proof for the ``Case~(II)''
vertices of $N$,
implies that $\W^{N^*}_x=\{w_x\}$ in $N^*$ and \rev{that} $w_x$ is not contained
in any multi-arc.

In summary, none  of the applications
of the constructions described in the proofs of
Cases~(II) and (III), respectively, 
introduces a multi-arc. Moreover, every $x\in \Spe$ satisfies 
$\W^{N^*}_x=\emptyset$ or $\W^{N^*}_x=\{w_x\}$ such that 
$w_x$ is not contained in any multi-arc. This, and the arguments 
preceding the discussion of Case (I) in the proof, imply
that  $N^*$ is a multi-arc free network.

Finally, Corollary~\ref{cor:Nexists} implies that the network $N$ can
be constructed in polynomial-time. 
Moreover, it is \rev{easy} to see that the sets $\W^N_x$ and $\V^N_x$, $x\in\Spe$,
as well as the construction steps carried out
in the Cases~(II) and Case (III) to transform $N$ into $N^*$
can be performed in polynomial-time. 
Hence, $N^*$ can be obtained from $(T;t,\sigma)$ in polynomial-time.
\qed
\end{proof}

Now, let $(T;t,\sigma)$ be an event-labeled gene tree, and let 
$N=(W,F)$ be the network associated to the simple subdivision $M'$ of 
$M=M(T;t,\sigma)$ as detailed in Definition \ref{def:MULfoldN}.
Let $N^*=(W^*,F^*)$ be the network without
multi-arcs obtained from $N$ by the constructions
detailed in Definition~\ref{def:assoMUL} and
Proposition~\ref{prop:multi-arcFree}. 
As argued in the proof of Corollary~\ref{cor:Nexists}, 
there is always a \rev{TreeNet-}reconciliation map $\mu_{\kappa',f}$ from 
 $(T;t,\sigma)$ to $N$. The proof is, in particular, based on the fact that 
there is a folding map $f$ from $M'$ to $N$.
However, such a folding map may not exist for $M'$ and $N^*$. 
Therefore, we will slightly adjust the map
$\mu_{\kappa', f}$ 
to obtain a TreeNet-reconciliation map $\mu^*$ from $(T;t,\sigma)$ to $N^*$.

To this end, we	partition the vertex set of the original network
$N = (W,F)$ as follows. 
\begin{defi}
Let $N = (W,F)$ be a network on $\Spe$ and let $\Spe'$ be the set of all $x\in \Spe$ with $\W^N_x \neq \emptyset$ holding. 
Put
\[W_1 = W\setminus W_2 \text{ and } W_2 = \bigcup_{x\in\Spe'} (\W^N_x \cup \{x\})\]
and 
\[F_1 = \{a\in F \mid h_N(a),t_N(a)\in W_1\}.\]
\label{def:W1}
\end{defi}

Clearly, $W_1$ and $W_2$ form a partition of $W$. 
By Lemma~\ref{lem:new}(iii), there are no
arcs $(u,v)\in F$ with $u\in \W^N_x$ and $v\notin \W^N_x$ 	
and 	$\W^N_x\cap \W^N_y = \emptyset$ for all distinct $x,y\in \Spe$.
As an immediate consequence, we obtain the following

\begin{observe}
The	subgraphs $N[W_1]$ of $N$ and $N^*[W_1]$ of $N^*$ induced by $W_1$
coincide, i.e., $N[W_1] = N^*[W_1]$.
\end{observe}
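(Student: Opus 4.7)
The plan is to verify the Observation directly by checking that (a) no vertex of $W_1$ is removed when passing from $N$ to $N^*$, (b) no arc of $N$ with both endpoints in $W_1$ is removed, and (c) no arc newly added to $N^*$ has both endpoints in $W_1$. Together these give $W_1\subseteq W^*$ and equality of the arc sets of the two induced subgraphs, which is exactly $N[W_1]=N^*[W_1]$.

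First I would observe that the construction in Definition~\ref{def:Nstar} only ever deletes vertices lying in some $\W^N_x$ with $x\in\Spe'$, so the set of retained original vertices is precisely $W\setminus\bigcup_{x\in\Spe'}\W^N_x\supseteq W_1$ (note that the leaves $x\in\Spe'$ themselves are never removed). In particular $W_1\subseteq W^*$, so the induced subgraph $N^*[W_1]$ is well-defined.

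Next I would compare arcs. Any arc of $N$ that is removed in building $N^*$ is, by construction, incident to some vertex of $\bigcup_{x\in\Spe'}\W^N_x\subseteq W_2$, so it has at least one endpoint outside $W_1$ and therefore does not belong to $F_1=E(N[W_1])$. Conversely, every arc of $F_1$ has both endpoints in $W_1\subseteq W\setminus\bigcup_{x\in\Spe'}\W^N_x$ and hence survives in $N^*$; by Lemma~\ref{lem:new}(iii) these arcs never cross between different $\W^N_x$-components either, so nothing further can disturb them. Thus $F_1\subseteq E(N^*[W_1])$.

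For the reverse inclusion, I would examine the arcs newly added in Cases~(II) and (III) of Definition~\ref{def:Nstar}. In Case~(II) the only new arc is $(z,x)$ with $x\in\Spe'\subseteq W_2$; in Case~(III) the new arcs are $(z,w_x)$ for $z\in\V^N_x$ and $(w_x,x)$, and here $w_x\notin W$ (hence $w_x\notin W_1$) while $x\in W_2$. In every case at least one endpoint of each newly added arc lies outside $W_1$, so no new arc contributes to $N^*[W_1]$. Combining the three observations yields $E(N[W_1])=F_1=E(N^*[W_1])$, and together with $V(N[W_1])=W_1=V(N^*[W_1])$ this gives the desired equality. The only mild subtlety is making sure the $w_x$ vertices introduced in Case~(III) are not accidentally counted in $W_1$, but this is immediate since $W_1\subseteq W$ and $w_x\notin W$ by construction.
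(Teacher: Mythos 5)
Your proof is correct and follows the same route the paper takes: the paper derives the Observation as an immediate consequence of the partition $W=W_1\cup W_2$ together with Lemma~4(ii)/(iii), which is exactly your three-part check that no $W_1$-vertex is deleted, no $F_1$-arc is deleted, and no newly added arc has both endpoints in $W_1$. You have simply spelled out the details that the paper leaves implicit.
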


\begin{defi}
	Let  $(T=(V,E);t,\sigma)$ be an event-labeled gene tree on $\Gen$ 
	and let $N=(W,F)$	be a network on $\Spe$ such that there is a 
	TreeNet-reconciliation map $\mu$ from $(T=(V,E);t,\sigma)$ to $N$. Moreover, let
  $N^*=(W^*,F^*)$ be the 
	multi-arc free network on $\Spe$ as in Definition~\ref{def:Nstar} and
	let $W_1$ and $F_1$ be defined for $N$ as in Definition \ref{def:W1}. 
	The map $\mu^*:V\to W^*\cup F^*$ (w.r.t.\ $\mu$) is defined \rev{for all $v\in V$}
        as follows:
	
\rev{	\begin{itemize}
	\item If $v\in \Gen$, or $t(v)=\bul$, or $t(v)=\squ$ and $\mu(v)\in F_1$, then 
   put $\mu^*(v)\coloneqq \mu(v)$.
	\item  Otherwise, there must exist a leaf 
	$x\in \Spe$ such that either $h_N(\mu(v))\in \W^N_x$ or $h_N(\mu(v))=x$, 
	and we put $\mu^*(v)\coloneqq  (\parent(x),x)$.
	\end{itemize} 
}
	\label{def:mustar}
\end{defi}

Consider  the event-labeled tree $(T;t,\sigma)$ depicted in
Fig.~\ref{fig:multiarc-free} and the network $N^*$ constructed from
$(T;t,\sigma)$ as specified in Proposition~\ref{prop:reconcNstar}. The map
$\mu^*$ is indicated in the bottom right network in
Fig.~\ref{fig:multiarc-free}. In what follows, we show that $\mu^*$ is
well-defined and, in particular, a TreeNet-reconciliation map from $(T;t,\sigma)$ to $N^*$.

\begin{proposition}
  Let  $(T=(V,E);t,\sigma)$ be a \rev{well-behaved} event-labeled gene tree on $\Gen$,
\rev{let}
	$N$ be the network on $\Spe$ associated to the simple subdivision of 
	$(M(T;t,\sigma),\chi)$ as in Definition \ref{def:MULfoldN}, and \rev{let} $N^*$ be the 
	multi-arc free network on $\Spe$ as in Definition~\ref{def:Nstar}.

	Then, \rev{	$\mu^*$ as in Def.\ \ref{def:mustar} } 
	is a TreeNet-reconciliation map from $(T;t,\sigma)$ to $N^*$. 
\label{prop:reconcNstar}  
\end{proposition}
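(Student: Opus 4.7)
The plan is to verify that $\mu^*$ is well-defined and satisfies Properties (R1)--(R3) of Definition~\ref{def:mu}. For well-definedness, the only non-trivial case is a duplication vertex $v$ with $\mu(v)\notin F_1$. At least one endpoint of $\mu(v)$ then lies in $W_2$, and since leaves of $N$ have outdegree $0$, one obtains $t_N(\mu(v))\in\W^N_x$ for some leaf $x\in\Spe$; Lemma~\ref{lem:new}(ii) then forces $h_N(\mu(v))\in\W^N_x\cup\{x\}$ for the \emph{same} $x$, and uniqueness of $x$ follows from Lemma~\ref{lem:new}(iii). Hence $(\operatorname{par}_{N^*}(x),x)$ is a bona fide arc of $N^*$. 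Properties (R1) and (R2.ii) are immediate since $\mu^*$ coincides with $\mu$ on leaves and assigns an arc of $N^*$ to every duplication vertex.

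For (R2.i) the well-behaved hypothesis is used twice. Given a speciation vertex $x$ of $T$ with children $x_1,\dots,x_k$, well-behaved forces $|\sigma(L_T(x))|\ge 2$; since $L_N(\fV(x))=\sigma(L_T(x))$ by the construction of $N$ from the simple subdivision of $M(T;t,\sigma)$, this yields $\mu(x)=\fV(x)\in W_1$, and hence $\mu^*(x)=\mu(x)$. Well-behaved applied again supplies two children $x_i,x_j$ of $x$ with $\sigma(L_T(x_i))\neq\sigma(L_T(x_j))$. I then case-split on whether $\sigma(L_T(x_r))$ ($r\in\{i,j\}$) is a singleton $\{s_r\}$ or a multi-species set. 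In the singleton case with $\W^N_{s_r}\neq\emptyset$ one has $\mu(x)\in\V^N_{s_r}$, and Def.~\ref{def:Nstar} supplies a new arc in $N^*$ from $\mu(x)$ either to $s_r$ (Case~II) or to $w_{s_r}$ (Case~III); in the singleton case with $\W^N_{s_r}=\emptyset$, the arc $(\mu(x),s_r)\in N$ is left untouched; in the multi-species case, $x_r$ is an internal vertex of $T$ in $W_1$ and the arc $(\mu(x),x_r)\in F_1$ is preserved in $N^*$. A short check across these combinations, combined with $\sigma(L_T(x_i))\neq\sigma(L_T(x_j))$, shows that the first children on the two paths from $\mu(x)$ to $\mu^*(x_i)$ and $\mu^*(x_j)$ in $N^*$ are distinct vertices, yielding $\mu^*(x)\in Q^2_{N^*}(\mu^*(x_1),\dots,\mu^*(x_k))$.

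For (R3) I case-split on whether $\mu^*$ agrees with $\mu$ on each of $v$ and $u$, where $v\prec_T u$. The central auxiliary fact is that any directed $N$-path whose endpoints both lie in $W_1$ stays entirely in $W_1$: once such a path enters $\W^N_y$, Lemma~\ref{lem:new}(ii) forces it to remain in $W_2$ and terminate at $y$. Thus any such path lies in $N[W_1]=N^*[W_1]$ and survives into $N^*$, settling the case $\mu^*(v)=\mu(v)$, $\mu^*(u)=\mu(u)$. When $\mu^*(v)=(\operatorname{par}_{N^*}(x),x)\neq\mu(v)$ but $\mu^*(u)=\mu(u)$, the $N$-path from the relevant endpoint of $\mu(u)$ down to $t_N(\mu(v))\in\W^N_x$ has a last vertex $v_{i-1}\in W_1$ before entering $\W^N_x$; this $v_{i-1}$ lies in $\V^N_x$, its $W_1$-prefix survives in $N^*$, and Def.~\ref{def:Nstar} gives $\operatorname{par}_{N^*}(x)\preceq_{N^*}v_{i-1}$, together delivering the required relation. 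The symmetric sub-case $\mu^*(v)=\mu(v)$, $\mu^*(u)\neq\mu(u)$ is ruled out, as it would force $\mu(v)\in W_2$ against $\mu(v)\in W_1$; and when both $\mu^*(v)$ and $\mu^*(u)$ are redefined, the same uniqueness-of-leaf argument used for well-definedness forces $\mu^*(v)=\mu^*(u)$, making (R3.i) trivial. I expect (R2.i) to be the main technical difficulty: the original separation guaranteed by $\mu$ in $N$ may rely on distinct multi-arcs between $\mu(x)$ and a single hybrid $\operatorname{par}_N(s)\in\W^N_s$, which collapse to a single child in $N^*$, and it is precisely the well-behaved hypothesis that lets us replace such a problematic pair of children by an alternative whose first children in $N^*$ are genuinely distinct.
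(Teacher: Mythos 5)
Your proposal is correct and follows the paper's overall skeleton (well-definedness first, (R1) and (R2.ii) immediately, (R3) via preservation of the order on $W_1\cup\Spe$, and (R2.i) as the crux), but your treatment of (R2.i) is a genuinely different argument. The paper starts from the separation already witnessed by $\mu$ in $N$ --- the two first arcs $(\mu(v),w')$ and $(\mu(v),w'')$ --- and then \emph{repairs} it when $w'$ or $w''$ falls into some $\W^N_x$, or when $w'=w''$ because of multi-arcs; well-behavedness enters only at that point, to produce an alternative child $v_\ell$ with $\sigma(L_T(v_\ell))\neq\{x\}$ whose image escapes the collapsed blob. You instead discard the $N$-separation entirely and rebuild one in $N^*$ directly from the gene tree: well-behavedness hands you two children $x_i,x_j$ with $\sigma(L_T(x_i))\neq\sigma(L_T(x_j))$, and the explicit correspondence between children of $x$ in $T$ and children of $\mu(x)$ in $N^*$ (an internal vertex $x_r\in W_1$ in the multi-species case; $s_r$ or $w_{s_r}$ in the singleton case) lets you read off two distinct first children. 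Your version is more constructive and makes the role of the well-behaved hypothesis more transparent, at the price of leaning harder on the specific structure of $N$ as the fold-up of $M(T;t,\sigma)$ --- which is legitimate here, since that is exactly the hypothesis of the proposition. Two small imprecisions, neither fatal: in the well-definedness step, if only the \emph{head} of $\mu(v)$ lies in $W_2$ then the tail need not lie in any $\W^N_x$ (it can sit in $\V^N_x\subseteq W_1$), though the required conclusion $h_N(\mu(v))\in\W^N_x\cup\{x\}$ then follows directly from the head being in $W_2$; and the sub-case of (R3) you declare ruled out ($\mu^*(v)=\mu(v)$ but $\mu^*(u)\neq\mu(u)$) does occur when $v$ is a leaf with $\sigma(v)=x$, but it is then immediate since $x\prec_{N^*}(\parent(x),x)=\mu^*(u)$.
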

\begin{proof}
	In what follows, let $N=(W,F)$, $N^*=(W^*,F^*)$ and put 
	$M\coloneqq M(T;t,\sigma) = (D,U)$.
	Moreover, let $W_1$ and $F_1$ be defined for $N$ as in Definition \ref{def:W1}. 

	Corollary~\ref{cor:Nexists} implies that there is always a
        \rev{TreeNet-}reconciliation map 
	$\mu\coloneqq \mu_{\kappa',f}$ from $(T;t,\sigma)$ to $N$. In what follows, we show that 
	$\mu^*$ (w.r.t.\ $\mu$) as in Definition \ref{def:mustar} is a
        \rev{TreeNet-}reconciliation map from $(T=(V,E);t,\sigma)$ to $N^*$.

   Note, there may be arcs and vertices in $N$ that have been removed
   to obtain $N^*$
and new vertices and arcs may have been added in the construction of $N^*$. 
Hence, in order to show that $\mu^*$ is well-defined, we must ensure that
either $\mu(v)$ is still contained in $N^*$ and that, otherwise, 
if we assign  $\mu^*(v)=(\parent(x),x)$ the condition $h_N(\mu(v))\in \W^N_x$ or $h_N(\mu(v))=x$
is satisfied for this leaf $x\in \Spe$.

Clearly, if $v\in \Gen$, then $\mu^*(v) = \mu(v) = \sigma(v)$ is well-defined. 

Now suppose that $v\in V^0$ with $t(v)=\bul$. 
We claim that $\mu(v)\in W_1$. To see this, note
that $v$ must have two distinct children $v'$ and $v''$ in $T$ 
for which
$\sigma(L_T(v'))\neq \sigma(L_T(v''))$
\rev{holds as $(T;t,\sigma)$ is well-behaved.}
Thus, $|\sigma(L_T(v))|>1$. 
By Properties~(R1) and (R3) for $\mu$, we have $\mu(v)\succ_N x$
for all $x\in \sigma(L_T(v))$. Hence, $|L_N(\mu(v))|>1$. 
Thus, $\mu(v)\in W$ is an inner vertex and 
there exists no  $x\in \Spe$ such that
$\mu(v)\in \W^N_x$. Thus, $\mu(v)\in W_1$, as claimed.
Since none of the vertices in $W_1$ have been removed from $N$ to obtain $N^*$, 
$\mu(v)$ is still contained in $N^*$ for all speciation vertices
$v$ of $(T;t,\sigma)$
and we can put $\mu^*(v) =  \mu(v)\in W^*$. 

Now suppose that $v\in V^0$ with $t(v)=\squ$.
If $\mu(v)= (u,w)\in F_1$, then $N[W_1] = N^*[W_1]$ implies that 
the arc $(u,w)$ still exists in $N^*$ and we can put $\mu^*(v) = \mu(v)\in F^*$. 
If $\mu(v)= a = (u,w)\notin F_1$, then $u\notin W_1$ or $w\notin W_1$.
If $u\notin W_1$, then $u\in \W^N_x$ for some $x\in \Spe$
and Lemma \ref{lem:new}(ii) implies that either $w\in  \W^N_x$ or $w=x$. 
In either case, $w$ is always contained in $W_1$ and hence, 
$w\in  \W^N_x$ or $w=x$ (and thus, $a=(\parent(x),x)$) for some $x\in \Spe$. 
In this case, we put $\mu^*(v) =	(\parent(x),x) \in F^*$. 

In summary, it follows that $\mu^*$ is well-defined, 

To see that $\mu^*$ is a reconciliation map from $(T;t,\sigma)$
to $N^*$, we need to show that Properties~(R1) - (R3) hold. Since
Property~(R1) clearly holds as
$\mu(v)=\sigma(v)$ for all $v\in \Gen$, it suffices to restrict attention to
Properties~(R2) and (R3).
In what follows, let  $V_{\mu} \subseteq V$ be the set of vertices $v\in V$
such that $\mu^*(v) = \mu(v)$.

We start with establishing Property~(R3).
	We first recap, that the  ancestor relationship between the vertices in $W_1 \cup \Spe$ 
	has not been changed in $N^*$, that is, $w,w'\in W_1 \cup \Spe$ with $w\prec_{N} w'$
        (resp.\ $w=w'$) in $N$
	implies $w\prec_{N^*} w'$ (resp.\ $w=w'$) in $N^*$. The latter also implies that 
  the relative order of the arcs $(a,b)\in F_1$	under $\prec_N$
	has not been changed in $N^*$.

To see Property~(R3), suppose that $u,v\in V$ with $v\prec_T u$.

To see Property (R3.i),  assume that $t(v)=t(u)= \squ$. We need to show
that $\mu^*(v)\preceq_{N^*} \mu^*(u)$. 
Clearly, if $u,v\in  V_{\mu}$ then $\mu^*(v)=\mu(v)\preceq_{N} \mu(u)=\mu^*(u)$
as $\mu$ satisfies Property~(R3.i). Since the
relative order of the arcs in $F_1$ has not been changed by the
construction of $N^*$
it follows that $\mu^*(v) \preceq_{N^*} \mu^*(u)$. Hence,  Property~(R3.i)
holds in this case.

So assume that $u,v\not\in  V_{\mu}$.
Hence, $\mu(u)= a \in F\setminus F_1$ and, as argued above,  
$h_N(a)\in  \W^N_x$ or $a=(\parent(x),x)$ for some $x\in \Spe$.
Note, $\mu(v)\preceq_N \mu(u)=a$ as $\mu$ satisfies Property~(R3).
This combined with Lemma~\ref{lem:new}, implies that 
there cannot be an arc $(w,z)$ in $N$ with 
$w\in \W^{N}_x$ and $z\notin	\W^{N}_x$ or $z\neq x$.
Thus, for  $a'=\mu(v)$ we have $t_N(a')\in \W^N_x$ 
and therefore, $h_N(a')\in \W^N_x$ or $h_N(a')=x$.
By construction of $\mu^*$ we have, therefore, $\mu^*(u) = \mu^*(v) = (\parent(x),x)$.
Hence,  Property~(R3.i) holds in this case.

Assume next that one of $u$ and $v$ is contained in $V_{\mu}$
whereas the other is not. Note that if $u\notin V_{\mu}$, then 
$\mu(u) \in F\setminus F_1$. Similar arguments
as in the latter case imply $\mu(v) \in F\setminus F_1$.
Hence,  $v\notin V_{\mu}$. Therefore, 
$u\in V_{\mu}$ and $v\notin V_{\mu}$ must hold. 
Thus, $\mu(u) = \mu^*(u) \in F_1\subseteq F^*$
and $h_N(\mu(v))\in \W^N_x$ or $\mu(v)=	(\parent(x),x)$ for some
$x\in \Spe$.
Hence,  $\mu^*(v)= (\parent(x),x)$.  
Since $\mu$ satisfies Property (R3.i) in $N$ we have
$x\prec_N\mu(v)\preceq_{N} \mu(u) = \mu^*(u)$ and, therefore, 
that $x\prec_{N^*}\mu^*(v) \preceq_{N^*} \mu^*(u)$. 
In combination, we obtain that Property~(R3.i) is satisfied for all
duplication vertices of $N$.

To see that Property~(R3.ii) also holds, assume that
at least one of $t(u)=\bul$ and $t(v)=\bul$ holds.
If $t(v)=\bul$ then $v\in V_{\mu}$. Again, this implies that 
$u\in V_{\mu}$, since $v\prec_T u$, $\mu$ satisfies Property~(R3.ii)  
and Lemma \ref{lem:new}(ii) holds.
Moreover,   
Thus, $\mu^*(v)=\mu(v)\prec_N \mu(u) =\mu^*(u)$. 
Since $u,v\in V_{\mu}$ and the ancestor relationships in $N$ are preserved in $N^*$
for all vertices in $W_1\cup \Spe$, 
it follows that $\mu^*(v)\prec_{N^*}\mu^*(u)$.

So assume that $t(u)=\bul$. Then $u\in V_{\mu}$. Note that we may
assume w.l.o.g.\ that $t(v)=\squ$ as otherwise $\mu^*(v)=\mu(v)$
and $\mu^*(u)=\mu(u)$ must hold. Thus, similar arguments
as before imply that $\mu^*(v)\prec_{N^*}\mu^*(v)$. Note also that
we may assume that $\mu(v)\not\in F_1$ as otherwise we have again
that $\mu^*(v)=\mu(v)$ and $\mu^*(u)=\mu(u)$ which, in turn, implies
$\mu^*(v)\prec_{N^*}\mu^*(v)$.  Then
$h_N(\mu(v))\in\W_x^N$ or there exists some $x\in \Spe$
such that $\mu(v)=(\parent(x),x)$. Hence, $\mu^*(v)=(\parent(x),x)$.
Thus, $x\prec_{N}\mu(v)\prec_N \mu(u)=\mu^*(u)$.
Since the ancestor relationships in $N$ are preserved in $N^*$
for all vertices in $W_1\cup \Spe$ and since $\mu^*(v)=(\parent(x),x)$
is the lowest possible choice for $\mu^*(v)$, 
it follows that $x\prec_{N^*} \mu^*(v)\prec_{N^*}\mu^*(u)$. This concludes the proof of
Property~(R3.ii) and, thus the proof of Property~(R3).

It remains to show that Property~(R2) holds. 
Clearly, by construction of $\mu^*$,  Property (R2.ii) is satisfied in $N^*$. 

To see Property~(R2.i), assume that $v\in V$ with $t(v)=\bul$ and children
$v_1,\dots,v_k\in V$, $k\geq 2$. 
We need to show that $\mu^*(v)\in Q_{N^*}^2(\mu^*(v_1),\dots, \mu^*(v_k))$.
Since $\mu$ is a \rev{TreeNet-}reconciliation map from $(T;t,\sigma)$ to $N$, we
clearly have 
$\mu(v)\in Q_{N}^2(\mu(v_1),\dots, \mu(v_k))$. 	
Thus, there exist $i,j\in \{1,\ldots, k\}$ such
that $\mu(v_i)$ and $\mu(v_j)$ (note that $\mu(v_i) = \mu(v_j)$
might be possible) are separated by $\mu(v)$ in $N$. 
Therefore, there exists a vertex
$w'\in W$ such that $(\mu(v),w')$ is the first arc
on a directed path from $\mu(v)$ to  $\mu(v_i)$
in $N$. Similarly, there exists a vertex
$w''\in W$ such that $(\mu(v),w'')$ is the first arc
on a directed path from $\mu(v)$ to  $\mu(v_j)$ in $N$.
Note that $w'=w''$ or $w'\neq w''$ might hold. In the first case, we might
have multi-arcs between $\mu(v)$ and $w'$ in $N$. 

Assume first that $w'\neq w''$. 
We distinguish between the cases that $w',w''\in W_1$ (Case (C1)) and that at
least one of $w'$ and $w''$ is not contained in $W_1$ (Case (C2)).
\begin{owndesc}
		\item[\textnormal{\em {Case (C1):}}]
		Since $w', w''\in W_1$, we have for all $x\in\Spe$ that 
	$w',w''\notin \W^N_x$. 
	        By construction of $N^*$, both vertices $w'$ \rev{and}
                $w''$ are also contained in $N^*$. 
	By the definition of $\mu^*$, we have 
	that $\mu^*(v_i) = \mu(v_i)$ in case $v\in \Gen$, or $t(v)=\bul$,
	or $t(v)=\squ$ and $\mu(v_i)\in F_1$, and that, otherwise,
	there exists some $x\in \Spe$ such that
	$\mu^*(v_i) = (\parent(x),x)$. 
	Since the ancestor relationship of all vertices in $W_1\cup \Spe$ is preserved in $N^*$
	it follows that $(\mu(v), w') \succeq_{N^*} \mu^*(v_i)$. Put differently,
	there exists a directed path from $\mu^*(v) = \mu(v)$ to $\mu^*(v_i)$ in $N^*$
	that contains the arc $(\mu^*(v), w')$. Note that $\mu^*(v_i) = \mu(v_i) =
	(\mu^*(v), w')$ might hold. Similarly, there exists a directed path from
	$\mu^*(v)$ to $\mu^*(v_i)$ in $N^*$ that contains the arc $(\mu^*(v), w'')$.
	Hence, $\mu^*(v)$ separates $\mu^*(v_i)$ and $\mu^*(v_j)$ in $N^*$.
	Therefore, $\mu^*(v)\in Q_{N^*}^2(\mu^*(v_1),\dots, \mu^*(v_k))$.
	\smallskip

	\item[\textnormal{\em {Case (C2):}}]
	W.l.o.g.\ assume  that $w'\notin W_1$.
	Thus, either $w' \in \W^N_x$ or $\W^N_x\neq \emptyset$ and $w'=x$, for some $x\in \Spe$. 	
	We claim that, in $N^*$,   	either  $\parent(x) = \mu(v)$ or  $\parent(x) = w_x$ holds, where $w_x$ is the unique vertex
	added by replacing $\W^N_x$ as in Definition~\ref{def:Nstar}.
	To see this, note first that since
        all vertices in $\W^N_x$ and their incident arcs were
        removed from $N$ and either the arc $(\mu(v),x)$ or the
        two arcs $(\mu(v),w_x)$ and $(w_x,x)$
	were added to obtain $N^*$ it follows that 
	either  $\parent(x) = \mu(v)$ or  $\parent(x) = w_x$, as claimed.
	Furthermore, since there is a directed path from
  $\mu(v)$ to  $\mu(v_i)$  in $N$ with arc  $(\mu(v),w')$, 
	we have $\mu(v_i) = (\mu(v),w')$ or $w'\succeq_N\mu(v_i)$.
  Hence, $\mu(v_i)\in \W^N_x$ or $\mu(v_i) = x$. 

	Note that $\mu(v_i)\in \W^N_x$ or $\mu(v_i) = x$ implies in
	particular that, in $N^*$, we have
	$\mu^*(v_i) = (\parent(x),x)$ or $\mu^*(v_i) =x$.

	We next claim that there exists some $1\leq \ell\leq k$ distinct from
	$i$ such that $\mu^*(v_{\ell}) \neq (\parent(x),x)$. To see this claim 
	note first that since $\mu$ satisfies Property~(R3) and
	$\mu(u) =  \sigma(u)\in \Spe$ for all $u\in \Gen$, it
	follows that $\mu(v_i)\succeq_N \sigma(u)$ for all $u\in \Gen$
  with $v_i\succeq_T u$. Thus, if $\mu(v_i)\in \W^N_x$ then
  $\sigma(L_T(v_i)) = \{x\}$.  Since $\sigma(L_T(v_i)) = \{x\}$ also holds in
  case of $\mu(v_i) = x$ our assumption \rev{that $T$ is well-behaved}
  implies that there is some
	$1\leq \ell\leq k$ distinct from $i$ such that
	$\sigma(L_T(v_{\ell}))\neq  \sigma(L_T(v_i))=\{x\}$. 
	Let $y\in \sigma(L_T(v_{\ell}))$ with $y\neq x$. 
	Since $v_{\ell} \succeq_T u$ for some $u\in \Gen$
  with $\mu^*(u) = \sigma(u)=y$
	and $\mu^*$ satisfies Property~(R3) in $N^*$, we have 
	$\mu^*(v_{\ell})\succeq_{N^*} y$. Therefore, 
	$\mu(v_{\ell})$ cannot be contained in $\W^N_x$. 
	Hence, $\mu^*(v_{\ell}) \neq (\parent(x),x)$, as claimed.

	As shown above, $\mu^*(v_i) = (\parent(x),x)$ or $\mu^*(v_i) =x$. 
	In either case, by construction of $N^*$, the vertex $\parent(x)$
	has only one child, namely the vertex $x$. 
	Since $x\neq y$, it follows that if 
	$\mu^*(v_{\ell})\succeq_{N^*} y$ then $\parent(x) \not \succeq_{N^*} \mu^*(v_{\ell})$.   
	The latter combined with $\mu^*(v)\succ_{N^*} \mu^*(v_{\ell})$
  implies that there is an alternative directed path from $\mu^*(v)$
  to $\mu^*(v_{\ell})$ that has only $\mu^*(v)$
	in common with the directed path from $\mu^*(v)$ to $\mu^*(v_{i})$. 
	Thus, $\mu^*(v)$ separates $\mu^*(v_{i})$ and $\mu^*(v_{\ell})$
	and  $\mu^*(v)\in Q_{N^*}^2(\mu^*(v_1),\dots, \mu^*(v_k))$.
\end{owndesc}

	To finish the proof that $\mu^*$ satisfies Property~(R2.i) it remains
	to consider the case that $w'=w''$. Then $(\mu(v),w')$ and
        $(\mu(v),w'')$
	are parallel arcs.  By construction of $N^*$, 
  if there are parallel arcs between two vertices $\mu(v)$ and $w'$ in
  $N$, then $w'=\parent(x)\in\W^N_x$ must hold for some $x\in	\Spe$.
	Hence, $w' \in \W^N_x$. Similar arguments as in the 
	proof of Case (C2) imply that $\mu^*(v)\in Q_{N^*}^2(\mu^*(v_1),\dots, \mu^*(v_k))$.

	In summary,  $\mu^*$ satisfies Property~(R2). This completes the proof
	that $\mu^*$ is a \rev{TreeNet-}reconciliation map from $(T;t,\sigma)$ to $N^*$.
\qed
\end{proof}

As suggested already by Fig.\ref{fig:multiarc-free}, the network
$N^*$ obtained from $N$
as described in Definition~\ref{def:Nstar} displays all triples in
the event-labeled
gene tree in Fig.~\ref{fig:multiarc-free}. That this is not a
coincidence is the purpose of the next result.

\begin{proposition}
 Let $(T;t,\sigma)$ be an event-labeled gene tree on $\Gen$ and let $N$ be a
 species network on $\Spe$ obtained from the simple subdivision of the MUL-tree
 $(M(T;t,\sigma),\chi)$ as in Definition \ref{def:MULfoldN}. 
 Then the multi-arc free network $N^*$ on $\Spe$ displays all triples in $\mathcal{S}(T;t,\sigma)$.  
\label{prop:multi-arc-free-triples}
\end{proposition}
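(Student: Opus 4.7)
The plan is to fix an arbitrary triple $\sigma(a)\sigma(b)|\sigma(c)\in\mc{S}(T;t,\sigma)$ and to exhibit a subgraph of $N^*$ that, after suppression of indegree-one-outdegree-one vertices, is isomorphic to this triple. Let $v\coloneqq\lca_T(a,b,c)$ and $w\coloneqq\lca_T(a,b)$; since $ab|c\in R(T)$ we have $w\prec_T v$ strictly, and $\sigma(a),\sigma(b),\sigma(c)$ are pairwise distinct by the definition of $\mc{S}(T;t,\sigma)$. Write $u_1$ for the child of $v$ in $T$ on the path to $w$ (possibly $u_1=w$) and $u_2$ for the child of $v$ on the path to $c$; these are distinct children of $v$.

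First I would observe that every inner vertex of $M\coloneqq M(T;t,\sigma)$ persists in $N$, and that the subgraph of $N$ induced on these inner vertices coincides with the corresponding inner subgraph of $M$, since the construction in Definition~\ref{def:MULfoldN} only identifies arcs whose heads are leaves. In particular $v,w,u_1,u_2$ remain distinct tree vertices in $N$. Let $P_c$ denote the directed path in $N$ from $u_2$ to the species leaf $\sigma(c)$ obtained from the unique path from $u_2$ to $c$ in $M$, and define $P_a$ (from $w$ to $\sigma(a)$) and $P_b$ (from $w$ to $\sigma(b)$) analogously. Since in the tree $M$ the paths from $w$ to $a$ and from $w$ to $b$ share only $w$, while the path from $u_2$ to $c$ is disjoint from both, and since the terminal species leaves $\sigma(a),\sigma(b),\sigma(c)$ are pairwise distinct, the three paths $P_a,P_b,P_c$ in $N$ are pairwise internally vertex-disjoint; together with the path $v\to u_1\to\cdots\to w$ they form a ``triple-shaped'' subgraph of $N$ witnessing $\sigma(a)\sigma(b)|\sigma(c)$.

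Next I would lift this picture to $N^*$. Because $\sigma(a),\sigma(b),\sigma(c)$ are pairwise distinct, each of $v,u_1,w$ has at least two distinct species among its leaf descendants in $N$, so none lies in any $\W^N_x$, and Definition~\ref{def:Nstar} preserves them together with the arcs among them. For each $\alpha\in\{a,b,c\}$, I would trace along $P_{\alpha}$ and locate the first vertex (if any) that lies in $\W^N_{\sigma(\alpha)}$; by Lemma~\ref{lem:new}(ii) every vertex after that on the path except $\sigma(\alpha)$ itself also lies in $\W^N_{\sigma(\alpha)}$, and the immediately preceding vertex belongs to $\V^N_{\sigma(\alpha)}$. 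Definition~\ref{def:Nstar} then provides a shortcut from this preceding vertex to $\sigma(\alpha)$, either as a single arc (if $|\V^N_{\sigma(\alpha)}|=1$) or as a length-two path through the new hybrid vertex $w_{\sigma(\alpha)}$ (otherwise), producing a path $P^*_{\alpha}$ in $N^*$ with the same starting vertex as $P_{\alpha}$ and ending at $\sigma(\alpha)$.

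Finally I would verify that $P^*_a,P^*_b,P^*_c$ are pairwise internally vertex-disjoint in $N^*$. Their non-shortcut prefixes are subpaths of $P_a,P_b,P_c$ and hence lie in the pairwise disjoint subtrees of $M$ used above; any inserted hybrid vertex $w_{\sigma(\alpha)}$ has $\sigma(\alpha)$ as its unique child, so the three such vertices (if present) are mutually distinct and each appears on at most one $P^*$; and the endpoints $\sigma(a),\sigma(b),\sigma(c)$ remain distinct. Combined with the preserved path from $v$ through $u_1$ to $w$, the resulting subgraph of $N^*$, after suppression of indegree-one-outdegree-one vertices, is precisely the triple $\sigma(a)\sigma(b)|\sigma(c)$, and so $N^*$ displays every triple in $\mc{S}(T;t,\sigma)$. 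The main obstacle is the disjointness bookkeeping under the shortcutting, in particular the corner case where $v$ or $w$ is itself the last non-$\W^N_{\sigma(\alpha)}$ vertex on a path, so that the arc inserted by Definition~\ref{def:Nstar} attaches directly at $v$ or $w$; one must still verify that separation at the relevant child of $v$ (or of $w$) is witnessed in $N^*$.
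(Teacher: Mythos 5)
Your proposal is correct and follows essentially the same strategy as the paper's proof: decompose the witness for $\sigma(a)\sigma(b)|\sigma(c)$ into a spine from $\lca_T(a,b,c)$ to $\lca_T(a,b)$ plus three leaf-paths, note that these survive because $N[W_1]=N^*[W_1]$ and the relevant inner vertices have more than one species below them, and reroute each leaf-path at the first vertex lying in $\W^N_{\sigma(\alpha)}$ via the shortcut of Definition~\ref{def:Nstar}, then check pairwise internal vertex-disjointness. The corner case you flag is handled in the paper simply by starting the $c$-path at $v$ and allowing the rerouting index to equal $1$, and it costs nothing since the pairwise distinct species $\sigma(a),\sigma(b),\sigma(c)$ force distinct shortcut arcs and hybrid vertices.
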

\begin{proof}
Put 
$M\coloneqq M(T;t,\sigma)$ and  
let $N=(W,F)$ and
$N^*=(W^*,F^*)$. 

Suppose that $A,B,C\in \Spe$ such that $AB|C \in \mathcal{S}(T;t,\sigma)$. 
Then there are three elements $a,b,c\in \Gen$ with pairwise distinct
$\sigma(a)=A$,
$\sigma(b)=B$ and $\sigma(c)=C$ such that  
the (undirected) path between $a$
and $b$ in $T$ does not intersect the directed path from $\rho_T$ to $c$ in $T$.
In particular, putting $v \coloneqq \lca_T(a,b,c)$
we have $t(v)=\bul$.  
Thus, $u \coloneqq \lca(a,b)\prec_T v$. \rev{In what follows, we write
$v^W$ for every $v\in V$ that is still contained in $W$.} 
Moreover, we say that two paths  $P(v_1,v_k)$ with $k\geq 1$ and 
$P(w_1,w_{\ell})$ with $\ell\geq 1$ \rev{in a graph}
are {\em internal vertex disjoint}, 
if $P(v_1,v_k)$ and $P(w_1,w_{\ell})$ share at most one vertex from the set
$\{v_1,v_k,w_1,w_{\ell}\}$.
	
Observe that any directed path $P = (v_1,\dots,v_k)$ in $T$ with $k\geq 1$, 
	for which the corresponding vertices $v_{1}^W,\dots,v_{k}^W$
	are contained in $W_1$, also forms  a directed path 
	$P_W\coloneqq (v^W_{1},\dots,v^W_{k})$ in $N$ and thus, in $N[W_1]$. 
	Since $N[W_1]$ and $N^*[W_1]$ coincide, it follows that $P_W$ is
        contained in $N^*$. 
	In particular, if there are internal vertex disjoint directed paths 
	$P=(v_1,\dots,v_k)$ with $k\geq 1$ and 
	$P'=(w_1,\dots,w_{\ell})$ with $\ell\geq 1$ in $T$, 
	then the directed paths $P_W=(v^W_{1},\dots,v^W_{k})$ and $P'_W=(w^W_{1},\dots,w^W_{{\ell}})$ exist in $N$. 
	Moreover,  $P_W$ and $P'_W$
        are internal vertex disjoint, whenever $v^W_{k},w^W_{{\ell}}\in W_1$,
        as this implies, 
	$v^W_{1},\dots,v^W_{k-1},w^W_{1},\dots,w^W_{{\ell}-1}\in W_1$ as well.

	Since  $|\sigma(L_T(v))|>1$ and $|\sigma(L_T(u))|>1$, we have by construction of $N$ that 
	$|L_N(v^W)|>1$ and $|L_N(u^w)|>1$. Thus, $v^W,u^W\in W_1$. 
	Let $P_1$ be the unique path from $v$ to $u$ in $T$. 
	Since $v^W,u^W\in W_1$, \rev{for any vertex $w$ of $P_1$, we have $w^W\in W_1$}. 
	Thus, the (directed) path $P^*_1$ 
	from $v^W$ to $u^W$ exists in $N$ and hence, also in $N^*$.

	Consider now the unique path $P_2 = (u,u_1,\dots u_k,a)$ 	in $T$.
 	For $\sigma(a) = A$, we distinguish between the cases that $|\chi(A)|=1$ and $|\chi(A)|>1$. 
	
	If  $|\chi(A)|=1$, then the arc $e=(u_k,A)$ is contained in the simple subdivision $M'$ of $M$
	and has not been identified 
	with any other arc as part of the construction of $N$. Since $M$ is a MUL-tree and  $|\chi(A)|=1$, it is easy to see
	that $u_k\in W_1$. It follows that $P'_2=(u^W,u^W_{1},\dots u^W_{k},
        \rev{A})$  is entirely contained in 
	$N[W_1]$, and thus, in $N^*[W_1]$. 

		So assume that $|\chi(A)|>1$. Then $e=(u_k,A)$ in $M$ was replaced  by 
	the two arcs $(u_k,v_e)$ and $(v_e,A)$ as part of the construction 
  of $M'$. Moreover, 
	the arc $(v_e,A)$ has been identified with other arcs 
	that have head in $\chi(A)$ to obtain
	the unique arc $(\parent(A),A)$ in $N$. 
	Thus, the path $P''_2 = (u^W_{0}\coloneqq u^W,u^W_{1},\dots u^W_{k},v_e,A)$
	exists in $N$. 
	Let $u^W_{i}$ be the first vertex on $P''_2$
	that is contained in $\W^N_A$. Note, $u^W\in W_1$ implies $u^W\neq u^W_{i}$.
	For the construction of $N^*$ from $N$,
        the subpath $(u^W_{{i-1}},u^W_{i}, \dots, u^W_{k},v_e,
        \rev{A})$ of $P''_2$
	has been replaced by either the arc  $(u^W_{{i-1}},A)$ 
	or by the two arcs $(u^W_{{i-1}},w_A)$ and $(w_A,A)$. 
	Thus, the last arc on the directed path $P'''_2=(u^W,u^W_{1},\dots u^W_{{i-1}},\dots,A)$ in $N^*$
	is either $(u^W_{{i-1}},A)$ or $(w_A,A)$.
	Let us denote by $P^*_2$ the respective path $P'_2$ or $P'''_2$ 
	\rev{from $u^W$ to $A$}  in $N^*$. 

	Analogously to $P_2$,   
 	by repeating the latter arguments for the unique paths $P_3 = (u,\dots,b)$, resp., $P_4 = (v,\dots,c)$ in $T$, we can construct the 
	paths $P^*_3$ \rev{from $u^W$ to $B$ in $N^*$}, resp.,  $P^*_4$ \rev{from $v^W$ to $C$ in $N^*$}. 
 		
	Since all path $P_1$,	$P_2$, $P_3$ and $P_4$ are pairwise internal vertex disjoint
	and since $A,B,C$ are pairwise distinct, 
	the construction of the respective paths $P^*_i$, $1\leq i\leq 4$ implies that
	$P^*_1$, $P^*_2$, $P^*_3$ and $P^*_4$ are pairwise internal vertex disjoint. 
	In particular, $P^*_1$ shares with  $P^*_2$, resp., $P^*_3$ only vertex $u^W$, 
	$P^*_1$ and $P^*_4$ share only vertex $v^W$, 
	$P^*_2$ and $P^*_3$ share only vertex $u^W$, 
	and $P^*_4$ does not share any vertex with 	$P^*_2$ and $P^*_3$. 
	In summary, after the suppression of vertices with in- and outdegree one in the
  minimal subgraph of $N^*$ that contains
	the paths $P^*_1, \dots, P^*_4$, we obtain a binary reduced tree $T'$
	on $\{A,B,C\}$ such that the (undirected) path between
	$A$ and $B$ does not intersect the path from $\rho_{T'}$ 
	to $C$. 
	Therefore, $AB|C$ is displayed in $N^*$. 
\qed \end{proof}

Taken Propositions  \ref{prop:multi-arcFree}, \ref{prop:reconcNstar} and 
\ref{prop:multi-arc-free-triples} together we obtain
\begin{theorem}
  Suppose $(T;t,\sigma)$ is a \rev{well-behaved} event-labeled gene tree.  
Then
there is a multi-arc free network $N^*$ for $(T;t,\sigma)$
that can be constructed in polynomial time and that displays 
all triples in $\mathcal{S}(T;t,\sigma)$. 
\end{theorem}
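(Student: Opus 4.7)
The plan is to assemble the theorem from the three preceding propositions, which have already done the heavy lifting. First I would take the given well-behaved event-labeled gene tree $(T;t,\sigma)$ and run the construction pipeline that has been built up through the paper: form the MUL-tree $(M(T;t,\sigma),\chi)$ of Definition~\ref{def:assoMUL}, pass to its simple subdivision $(M',\chi)$, and fold $M'$ into a (possibly multi-arc) species network $N$ on $\Spe$ via Definition~\ref{def:MULfoldN}, using Lemma~\ref{lem:MULfoldN} which guarantees that both $N$ and the underlying folding map $f$ exist and can be computed in polynomial time.

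Next I would apply Definition~\ref{def:Nstar} to $N$ in order to obtain the DAG $N^*$. Proposition~\ref{prop:multi-arcFree} says precisely that this $N^*$ is a multi-arc free species network on $\Spe$ and that it can be produced from $(T;t,\sigma)$ in polynomial time, so the structural and complexity parts of the theorem are immediate from citing that result.

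To verify that $N^*$ is a network \emph{for} $(T;t,\sigma)$ in the sense of Definition~\ref{def:mu}, I would exhibit a TreeNet-reconciliation map. For this I would invoke the trivial MUL-reconciliation $\kappa_{(T;t,\sigma)}$ of Lemma~\ref{trivial}, lift it to $(M',\chi)$ via Lemma~\ref{lem:reconcMUL-pseudoMUL} to obtain $\kappa'$, compose with the folding map $f$ using Theorem~\ref{thm:composed} to obtain a TreeNet-reconciliation $\mu_{\kappa',f}$ from $(T;t,\sigma)$ to $N$, and then apply the rerouting of Definition~\ref{def:mustar}. Proposition~\ref{prop:reconcNstar} asserts that the resulting $\mu^*$ is a TreeNet-reconciliation map from $(T;t,\sigma)$ to $N^*$, and it is exactly at this step that the well-behavedness hypothesis is consumed (it is what Proposition~\ref{prop:reconcNstar} requires of $T$).

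Finally, to obtain the triple-display claim I would simply cite Proposition~\ref{prop:multi-arc-free-triples}, which states that the very same $N^*$ obtained from the pipeline above displays every triple in $\mathcal{S}(T;t,\sigma)$. The only remaining obligation is bookkeeping: confirming that the three propositions are being applied to a common network $N^*$ produced by a single run of the pipeline, which is immediate from their formulations. I do not expect any genuine obstacle here, since the conceptual work was done in establishing Propositions~\ref{prop:multi-arcFree}, \ref{prop:reconcNstar} and~\ref{prop:multi-arc-free-triples}; the theorem is essentially their simultaneous statement for the canonical $N^*$ associated to $(T;t,\sigma)$.
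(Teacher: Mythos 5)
Your proposal is correct and follows exactly the paper's own argument: the theorem is obtained by combining Propositions~\ref{prop:multi-arcFree}, \ref{prop:reconcNstar} and \ref{prop:multi-arc-free-triples} applied to the single network $N^*$ produced by the MUL-tree/folding pipeline, with well-behavedness consumed in Proposition~\ref{prop:reconcNstar}. Your bookkeeping observation that all three results refer to the same canonical $N^*$ is precisely what the paper relies on.
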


The next result is well-known \cite{huson_rupp_scornavacca_2010},
but using the last result we can give a simple alternative proof.

\begin{corollary}
	For any set $R$ of triples there is a multi-arc free network that displays each triple in $R$. 
\end{corollary}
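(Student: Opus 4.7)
The plan is to reduce the corollary to the previous theorem by constructing, from $R$, a well-behaved event-labeled gene tree $(T;t,\sigma)$ with $\mc{S}(T;t,\sigma)\supseteq R$; the theorem then delivers a multi-arc free species network $N^*$ that displays every triple in $\mc{S}(T;t,\sigma)$, and in particular every triple of $R$. The degenerate cases $|R|\leq 1$ are immediate: for $|R|=0$ any multi-arc free network on $\bigcup_{t\in R} L(t)$ works, and for $|R|=1$ I would simply take $T$ to be the triple itself with both internal vertices labeled $\bul$.

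So assume $|R|=m\geq 2$. Write $R=\{t_1,\ldots,t_m\}$ with $t_i = a_ib_i|c_i$, set $\Spe=\bigcup_{i=1}^m L(t_i)$, and for each $i$ introduce three pairwise distinct fresh genes $a_i',b_i',c_i'$ with $\sigma(a_i')=a_i$, $\sigma(b_i')=b_i$, $\sigma(c_i')=c_i$. For each $i$ I would build a subtree $T_i$ realizing $t_i$, namely internal vertices $r_i$ (the root) and $s_i$, with $r_i$ having children $s_i$ and $c_i'$, and $s_i$ having children $a_i'$ and $b_i'$; both $r_i$ and $s_i$ are labeled $\bul$. The gene tree $T$ is then obtained by attaching the $m$ subtrees $T_i$ as children of a common new root $\rho$ which I label $\squ$, on gene set $\Gen = \bigcup_i \{a_i',b_i',c_i'\}$. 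Since $\rho$ has outdegree $m\geq 2$, this is indeed an event-labeled gene tree.

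Two routine verifications then close the argument. First, $(T;t,\sigma)$ is well-behaved in the sense of Def.\ \ref{def:well-behaved}: the only $\bul$-vertices are the $r_i$ and the $s_i$, and at each of them the two children have distinct $\sigma$-images of their leaf sets (namely $\{a_i,b_i\}\neq\{c_i\}$ at $r_i$ and $\{a_i\}\neq\{b_i\}$ at $s_i$, since each triple's three species are pairwise distinct). Second, $\mc{S}(T;t,\sigma)\supseteq R$: for every $i$ one has $a_i'b_i'|c_i'\in R(T)$, $\lca_T(a_i',b_i',c_i')=r_i$ carries the event $\bul$, and $a_i,b_i,c_i$ are pairwise distinct, so this triple contributes precisely $a_ib_i|c_i$ to $\mc{S}(T;t,\sigma)$. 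Applying the preceding theorem to $(T;t,\sigma)$ now yields a multi-arc free species network $N^*$ on $\Spe$ displaying every triple of $\mc{S}(T;t,\sigma)$, and hence of $R$.

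The only real delicacy is arranging $T$ to meet \emph{both} structural constraints simultaneously: the root of an event-labeled gene tree must have outdegree at least two, and every speciation vertex must be well-behaved. Labeling the glue vertex $\rho$ as $\squ$ (so well-behavedness imposes no condition on it) and using the cherry-plus-leaf shape inside each $T_i$ are exactly what is needed to secure both; everything else is bookkeeping.
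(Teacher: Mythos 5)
Your proof is correct and follows essentially the same route as the paper: build a gene tree by taking disjoint copies of the triples in $R$, attach them to a common root labeled $\squ$, check that the resulting tree is well-behaved with $\mathcal{S}(T;t,\sigma)\supseteq R$, and invoke the multi-arc-free network construction. Your separate treatment of $|R|=1$ is a sensible touch (the glued construction would give the root outdegree one there), but otherwise the argument matches the paper's.
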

\begin{proof}
	If $R=\emptyset $, the statement is trivially satisfied. 
	Hence, let $R = \{r_1, \dots,r_m\}$, $m\geq 1$,
        be a non-empty set of triples. 
	Put $L_R\coloneqq \cup_{r\in R} L(r)$.

	We construct an event-labeled gene tree $(T;t,\sigma)$ as follows:
	To each triple $r_i=x_{i1}x_{i2}|x_{i3} \in R$, $1\leq i \leq m$, we associate a
  triple $T_i=a_{i1}a_{i2}|a_{i3}$ with root $\rho_i$ making sure that 
	$L(T_i)\cap L(T_j) = \emptyset$ for all $1\leq i<j\leq m$. 
	We start with the empty graph $T=\emptyset$ and add first 
	all triples $T_1,\dots T_m$ to $T$. Next, we add a single 
	new vertex $\rho_T$ to $T$ and
	an arc $(\rho_T,\rho_i)$, $1\leq i \leq m$. By construction, 
	$T$ is a reduced phylogenetic tree.
	Note that although $x_{ir}  = x_{js}$ might hold, 
	we always have  $a_{ir}  \neq a_{js}$, $1\leq i<j\leq m$ and $r,s\in\{1,2,3\}$.
	Hence,	$L(T) = \{a_{11},a_{12},a_{13},\dots, a_{k1},a_{k2},a_{k3}\}$.
	Finally, we put $t(v)=\bul$ for all inner vertices $v\neq \rho_T$ 
	and $t(\rho_T)=\squ$ and  define the map $\sigma:L(T)\to L_R$ by putting 
	$\sigma(a_{ij}) = x_{ij}$, $1\leq i\leq m$ and $1\leq j\leq 3$. 

	By construction, $(T;t,\sigma)$ is an event-labeled gene tree
 	and each triple in $(T;t,\sigma)$ that is rooted at a speciation \rev{vertex}
        corresponds
	to one of the triples $T_i$, $1\leq i \leq m$. By the choice of $\sigma$
	we immediately obtain   $\mc{S}(T;t,\sigma)=R$.
        
Next, we construct the network $N$ 
	obtained from a simple subdivision of the MUL-tree
  $(M(T;t,\sigma),\chi)$ as described in Definition \ref{def:MULfoldN}.
	Finally, we construct	$N^*$ as in  Definition~\ref{def:Nstar}
	and apply  Proposition~\ref{prop:multi-arcFree} to conclude that 
	$N^*$ is a multi-arc free network. Moreover,
	Proposition~\ref{prop:multi-arc-free-triples} implies that 
	$N^*$ displays all triples in $R$, which completes
	the proof. 
\qed \end{proof}

\section{The Relationship Between Reconciliations for
  MUL-Trees and Networks}
\label{sec:rec-MUL}

In this section we show that, given
an event-labeled gene tree $(T;t,\sigma)$, and a network $N$, then $N$
is a species network for $(T;t,\sigma)$ if and only if
$U^*(N)$ is a pseudo-MUL-tree for $(T;t,\sigma)$ (see Theorem~\ref{important}). 
Note that the ``if"  direction of this result follows from Theorem~\ref{thm:composed}.

In what follows, let $(T=(V,E);t,\sigma)$ be an event-labeled gene tree, 
let $N=(W,F)$ be a multi-arc free species network on $\Spe$, and let $\mu$ be a 
reconciliation map between from $(T;t,\sigma)$ to $N$. 
Moreover, let $f=(f_V,f_E)$ denote a folding map from a pseudo MUL-tree 
$(M,\chi)= ((D,U),\chi)$ labeled by $\Spe$ to $N$.
By Proposition~\ref{prop:iso-mul-fold}, we may assume  w.l.o.g. that 
$(M,\chi) = (U^*(N), \chi^*)$ and that $\rho_N = \rho_M$.

\rev{In the following, we first define a map 
$\kappa_{\mu,f}:V \to (D\setminus D^1)\cup U$
associated to $\mu$ and $f$, and we then show 
that $\kappa_{\mu,f}$ is a MUL-reconciliation
from $(T;t,\sigma)$ to $(M,\chi)$ (Proposition~\ref{claim2}),
which will immediately imply Theorem~\ref{important}.
We shall define the map $\kappa_{\mu,f}:V \to (D\setminus D^1)\cup U$ in a top-down fashion.
Although the definition is quite long and technical, it follows in
a quite straight-forward fashion from the lifting properties of a
folding map (see Fig.~\ref{fig:kappamuf} for an illustrative example).}

\begin{figure}[tbp]
  \begin{center}
    \includegraphics[width=.9\textwidth]{./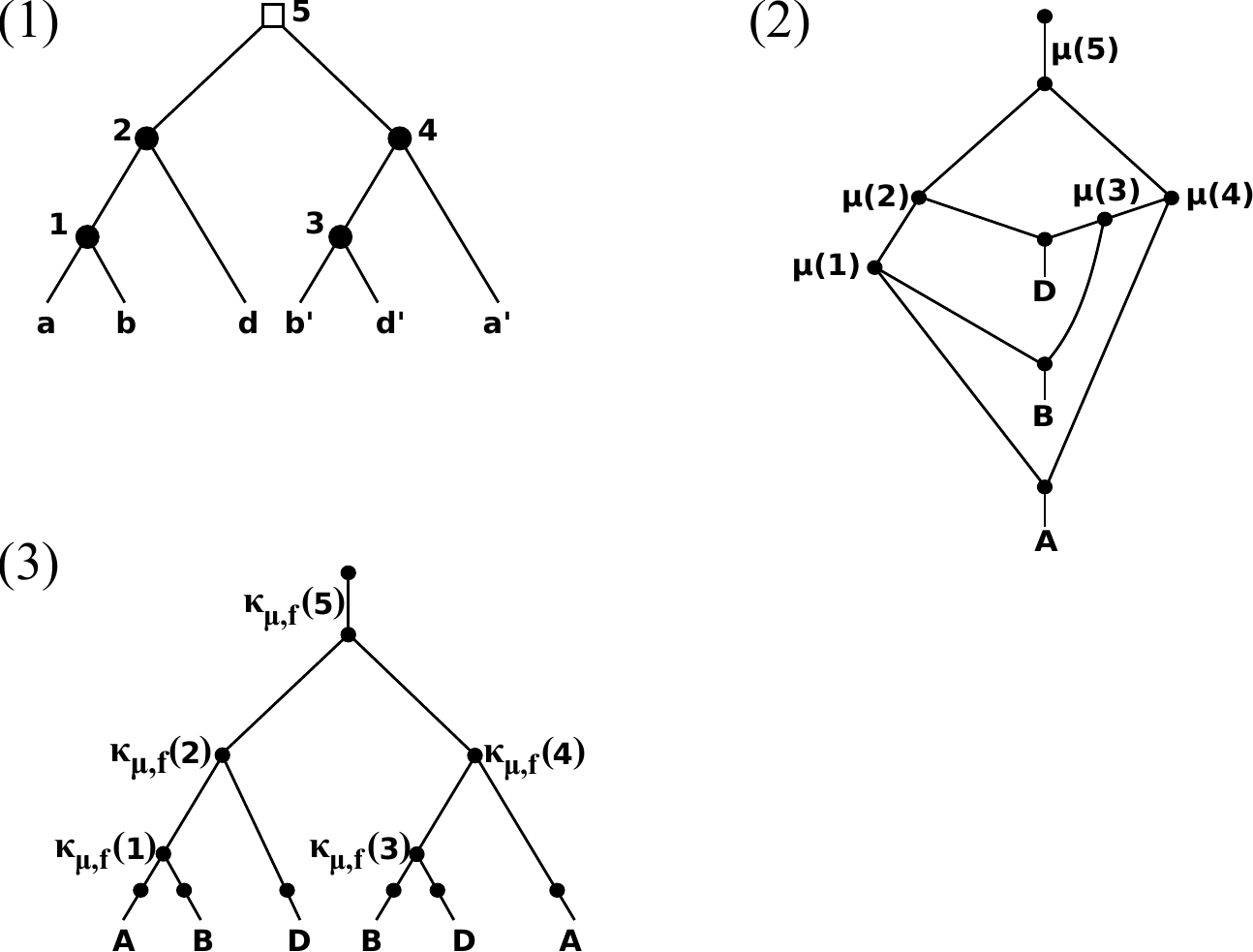}
  \end{center}
	\caption{}
	\rev{Panel (1) shows the event-labeled gene tree $(T;t,\sigma)$ from
          Fig.~\ref{fig:MULsimple-foldN} ignoring the labels of the inner vertices
          for the moment.	Speciation and duplication events 
	are represented as $\bullet$ and $\square$, respectively. } 
	Ignoring the labels of the inner vertices \rev{also in the
          remaining two panels}
        for the moment, we depict in Panel (3) the simple subdivision
  $(M',\chi)$ of $(M(T;t,\sigma),\chi)$, and the
	``fold-up'' of $(M',\chi)$ into $N$ using the folding map $f$ 
	as specified in Lemma \ref{lem:MULfoldN} (Panel (2)). 
Enumerating the inner vertices of $(T;t,\sigma)$
as indicated, the map $\mu$ is given in terms of the labels of the arcs
and vertices of $N$ is a TreeNet-reconciliation $\mu$ from $(T;t,\sigma)$
to $N$ and the map $\kappa_{\mu,f}$ represented in terms of the labels of
the arcs and vertices of $(M',\chi)$ is the MUL-reconciliation map
$\kappa_{\mu,f}$  from  $(T;t,\sigma)$ to $(M',\chi)$ as constructed in
Section~\ref{sec:rec-MUL}.
		\label{fig:kappamuf}
\end{figure}

\rev{In order to define $\kappa_{\mu,f}$ we begin with an observation.}
Let $x,y\in W$ be two distinct vertices with $y\prec_N x$ and
assume that $P(x,y)=(w_0=x,w_1,\ldots, w_k,w_{k+1}=y)$, $k\geq 0$ is
a directed path in $N$ from $x$ to $y$. For all $0\leq i\leq k$, let $a_i$
denote the arc $(w_i, w_{i+1})$ between $w_i$ and $w_{i+1}$ in $N$. Note,
that $a_i$ is well-defined, since $N$ is multi-arc free and thus, there is at most 
one arc between any two vertices of $N$. 
Observe that since $f$ is a folding from $(M,\chi) = (U^*(N), \chi^*)$ into $N$ 
there exists for all $ 0\leq i\leq k$
a unique arc $\widetilde{a_i^{w_i}}\in U$ obtained by lifting
the arc $a_i$ of $N$ at the vertex $w_i$ of $N$.

\rev{Now, suppose $v\in V$. To define $\kappa_{\mu,f}(v)$ in a top-down fashion, 
we begin with the base case  $v=\rho_T$.
Since we assume that $|L(T)|\geq 2$, either {\bf (i)} $t(v)=\bul$ or {\bf (ii)} $t(v)=\squ$.}

\begin{owndesc}
	\item \rev{Case (i):} If $t(v)=\bul$, then $\mu(v)$ is a vertex of $N$. Note that
	$\mu(v)\neq\rho_N$ must hold as $\mu$ satisfies Property~(R2.i). 
	  Hence, the directed path
          $P(\rho_N, \mu(v)) = (w_0= \rho_N,w_1,\ldots, w_k,w_{k+1} = \mu(v))$
	must cross at least one arc of $N$, i.e, $k\geq 0$.
	Since $f$ is a folding of $M$ into $N$ there exists a path
	in $M$ from $\rho_M=\rho_N$ to the head $h_M(\widetilde{a_k^{w_k}})$
	of the arc $\widetilde{a_k^{w_k}}\in U$ obtained by
	lifting $a_k$ at $w_k = \parent(\mu(v))$.
	Then we put $\kappa_{\mu,f}(v)=h_M(\widetilde{a_k^{w_k}})$.

	\smallskip
	\item \rev{Case (ii):}  If $t(v)=\squ$ then $\mu(v)$ is an arc of $N$.
	  If $\rho_N= t_N(\mu(v))$ then we put
          $\kappa_{\mu,f}(v)=\widetilde{a_0^{w_0}}$.
			\rev{In case $\rho_N\neq t_N(\mu(v))$,
	                  the directed path $P(\rho_N, t_N(\mu(v)))
                           = (w_0= \rho_N,w_1,\ldots, w_k,w_{k+1} = \mu(v))$
                          has at least one arc.}
	Since $f$ is a folding from $M$ to $N$ there exists a path
	from $\rho_M$ to the tail $t_M(\widetilde{a_k^{w_k}})$
	of the arc $\widetilde{a_k^{w_k}}\in U$
        obtained by lifting
	$a_k$ at $w_k$. \rev{So} we put $\kappa_{\mu,f}(v)=\widetilde{a_k^{w_k}}$.
\end{owndesc}

\rev{Now, assume} that $v\neq \rho_T$ is such that $\kappa_{\mu,f}(u)$
has already been defined for all vertices $u$ of $T$ that are above
$v$. Let $w\in V$ denote the parent of $v$ which must exist as $v\not=\rho_T$. If
$\mu(v)=\mu(w)$ then we put $\kappa_{\mu,f}(v)=\kappa_{\mu,f}(w)$. So
assume that $\mu(v)\not=\mu(w)$.
Note that $\mu(w)\not=\rho_N$, independent of whether
$t(w)=\bul$ or $t(w)=\squ$. We distinguish between the two cases
that $v$ is a leaf of $T$ and that it is not.

If $v$ is a leaf of $T$ then
$\mu(v)$ must be a leaf of $N$. Consider the directed path $P$ obtained by
extending the directed path
$P(\rho_N, \mu(w))$ of $N$ by the directed path $P(h_N(\mu(w)),\mu(v))$.
Note that $P$ must contain at least two arcs.
Then we put $\kappa_{\mu,f}(v) =h_M(\widetilde{b_q^{u_q}})$ where
 $u_q$ is the last but one vertex on $P$
and $b_q$ is the arc $(u_q, \mu(v))$ of $N$.

So assume that $v$ is not a leaf of $T$. To define $\kappa_{\mu,f}$, 
we need to distinguish between
the cases that {\bf(a)} $t(v)=t(w)=\bul$, {\bf(b)} $t(v)=\bul$ and
$t(w)=\squ$, {\bf(c)}
$t(v)=\squ$ and $t(w)=\bul$, and {\bf(d)} $t(v)=t(w)=\squ$
\rev{(note that since $\mu$ is a TreeNet-reconciliation from $(T;t,\sigma)$ to
$N$,  Property (R3) implies that $\mu(v) \preceq_N \mu(w)$ holds in all
of these cases)}. 

\begin{owndesc}
\item[{\em Case (a):}] If $t(v)=t(w)=\bul$, then $\mu(v)$
  and $\mu(w)$ are vertices of $N$.
  Assume first that none of the children of $w$ have
  obtained an image under $\kappa_{\mu,f}$.
Consider the directed path $P$ obtained by extending a
directed path $P(\rho_N, \mu(w))$ of $N$ 
 by a directed path $P(\mu(w),\mu(v))$. Then the
choice of $v$ combined with the fact that $f$ is a folding from $M$
to $N$ implies that there exists a path in $M$ from
$\rho_M$ via $\kappa_{\mu,f}(w)$ to $h_M(\widetilde{b_q^{u_q}})$
where $u_q$ is the last but one vertex on $P$
and $b_q$ is the arc $(u_q, \mu(v))$ of $N$. Note that $u_q$ must exist
since $\mu(v)\not=\mu(w)$. In this case, we put
$\kappa_{\mu,f}(v)= h_M(\widetilde{b_q^{u_q}})$.

Putting $v=v_1$, we can continue with all other children
$v_2,\dots,v_k$ of $w$. Since we want  $\kappa_{\mu,f}$ to
be a MUL-reconciliation, we need to
ensure that  $\kappa_{\mu,f}$ satisfies (M2.i). 

  Since $\mu$ satisfies Property~(R2.i), there must exist children
  $w_i$ and $w_j$ of $w$ such that  $\mu(w) \in Q_N(\mu(w_i),\mu(w_j))$.
  For $l\in\{i,j\}$ put $P_l:=P(\mu_w, z_l)$ in case $z_l$ is a vertex
  of $N$  and $P_l:=P(\mu_w, h_N(z_l))$ in case $z_l$ is an arc of $N$.
  Then since $\mu(w)$ separates $\mu(w_i)$ and $\mu(w_j)$ it follows
  that regardless of whether or not $\mu(w_i)$ and $\mu(w_j)$ are comparable,
  we may always choose directed paths  $P_i:=P(\mu_w, z_i)$ and
	$P_j:=P(\mu_w, \mu(z_j))$ in such a way that the
  first arc on $P_i$ is distinct from the first arc on $P_j$.

  \rev{For each $l\in \{2,\ldots, k\}$, let $P_l$ denote a path
    that is chosen this way. Assuming that $\kappa_{\mu,f}(v_i)$ has already
    been defined for some $1\leq i<k$, we then put
    $\kappa_{\mu,f}(v_{i+1})=h_M(\widetilde{b_q^{u_q}})$ where $u_q$ is the last vertex
    on $P_{i+1}$and $b_q$ is the arc $(u_q,\mu(v_{i+1}))$ of $N$.  }

	\smallskip

      \item[{\em Case (b):}] If  $t(v)=\bul$ and $t(w)=\squ$, then   $\mu(v)$
        is a vertex of $N$ and $\mu(w)$
is an arc of $N$. If  $h_N(\mu(w))=\mu(v)$
then we put $\kappa_{\mu,f}(v)=h_M(\kappa_{\mu,f}(w))$.
So assume that $h_N(\mu(w))\not =\mu(v)$.
Consider the directed path $P$ obtained by extending
a directed path $P(\rho_N, h_N(\mu(w)))$ of $N$ 
by a directed path $P(h_N(\mu(w)),\mu(v))$. 
Then the choice of $v$ combined with the fact that $f$ is a folding
of $M$ into $N$ implies that there exists a path from $\rho_M$
via $h_M(\kappa_{\mu,f}(w))$ to $h_M(\widetilde{b_q^{u_q}})$ where $u_q$
and $b_q$ are as in Case (a). Then we put
$\kappa_{\mu,f}(v)= h_M(\widetilde{b_q^{u_q}})$.
\smallskip

\item[{\em Case (c):}] If $t(v)=\squ$ and $t(w)=\bul$,
  then $\mu(v)$ is an arc of $N$ and $\mu(w)$
  is a vertex of $N$. Again, we assume first that none of the
  children of $w$ have been assigned a value under 
 $\kappa_{\mu,f}$. If  $t_N(\mu(v))=\mu(w)$
then we define $\kappa_{\mu,f}(v)=\widetilde{\mu(v)^{\mu(w)}}$.
So assume  $t_N(\mu(v))\neq \mu(w)$. 
Consider the directed path $P$ obtained by extending
a directed directed $P(\rho_N, \mu(w))$ of $N$ by a directed path $P(\mu(w),h_N(\mu(v)))$. Then the
choice of $v$ combined with the fact that $f$ is a folding
from $M$ to $N$ implies that there exists a directed path from $\rho_M$
via $\kappa_{\mu,f}(w)$ to $h_M(\widetilde{b_q^{u_q}})$
where $u_q$ is the last but one vertex of $P$
and $b_q$ is the arc $(u_q, h_N(\mu(v)))$ of $N$. Then we put
$\kappa_{\mu,f}(v)=\widetilde{b_q^{u_q}}$.
Putting again $v=v_1$, we can continue with all other children $v_2,\dots,v_k$ of $w$
	 in the same way as in Case (a). Employing analogous arguments,  
	it follows that $\kappa_{\mu,f}$ satisfies (M2.i) for Case~(c). 
\smallskip

\item[{\em Case (d):}] If $t(v)=t(w)=\squ$, then both $\mu(v)$ and $\mu(w)$
are arcs of $N$. Consider the directed path $P$
obtained by extending a directed path $P(\rho_N, h_N(\mu(w)))$ of $N$ 
by a directed path $P(h_N(\mu(w)),h_N(\mu(v)))$. Then the
choice of $v$ combined with the fact that $f$ is a folding
from $M$ into $N$ implies that there exists a directed path from $\rho_M$
via  $t_N(\kappa_{\mu,f}(w))$, $h_N(\kappa_{\mu,f}(w))$ and
$t_M(\widetilde{b_q^{u_q}})$
(note that $t_M(\widetilde{b_q^{u_q}})$ and $h_N(\kappa_{\mu,f}(w))$
might coincide)
to $h_M(\widetilde{b_q^{u_q}})$
where $u_q$ and $b_q$ are as in Case (c). Then we 
put $\kappa_{\mu,f}(v)=\widetilde{b_q^{u_q}}$.
\end{owndesc}

\rev{This completes the definition of $\kappa_{\mu,f}$.
Note that $\kappa_{\mu,f}$ can be regarded as a ``lifting" of
the map $\mu$ along $f$ in the sense discussed in \cite[Section 5]{huber2016folding}.
We now show that $\kappa_{\mu,f}$ is indeed
a MUL-reconciliation map from  $(T;t,\sigma)$ to $(M,\chi)$.}

\begin{proposition}\label{claim2}
Let $(T;t,\sigma)$ be an event-labeled gene tree, 
let $N$ be a species network on $\Spe$, and let $\mu$ be
a TreeNet-reconciliation map from $(T;t,\sigma)$ to $N$. 
Moreover, let $(M,\chi)$ be a pseudo MUL-tree that can be folded 
   via a folding map $f=(f_V,f_E)$ to $N$.
Then, $\kappa_{\mu,f}$ is
a MUL-reconciliation map from  $(T;t,\sigma)$ to $(M,\chi)$.
\end{proposition}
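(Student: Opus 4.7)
The plan is to verify that $\kappa := \kappa_{\mu,f}$ satisfies the three defining properties (M1), (M2), (M3) of a MUL-reconciliation map, proceeding by induction along the top-down construction of $\kappa$ and repeatedly invoking the folding axioms (F1)--(F3) together with Lemma~\ref{lem:MUL-ancestor}. I would also first verify as a running invariant that for every $v\in V$, the image $\kappa(v)$ is ``$f$-compatible'' with $\mu(v)$ in the sense that $f_V(\kappa(v))=\mu(v)$ if $t(v)=\bul$ or $v\in\Gen$, and $f_E(\kappa(v))=\mu(v)$ if $t(v)=\squ$. This invariant follows directly from the lifting construction (every $\widetilde{a^w}$ satisfies $f_E(\widetilde{a^w})=a$ and $t_M(\widetilde{a^w})=w$ with $f_V(w)=t_N(a)$ by (F3), and then $f_V(h_M(\widetilde{a^w}))=h_N(a)$ by (F1)) and will be the workhorse for the remaining properties.

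For (M1), at any leaf $v\in\Gen$ the construction sets $\kappa(v)=h_M(\widetilde{b_q^{u_q}})$ where $b_q=(u_q,\mu(v))$ is the last arc of a path to $\mu(v)=\sigma(v)\in\Spe$. The $f$-compatibility invariant yields $f_V(\kappa(v))=\sigma(v)$, and since $\kappa(v)\in L(M)$, property (F2) forces $\kappa(v)\in\chi(\sigma(v))$. Property (M2.ii) is immediate from the construction, since for every duplication vertex $v$ the image $\kappa(v)$ is explicitly set to a lift $\widetilde{a^w}$, which is an arc of $M$ by definition.

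The key step is (M2.i). Given a speciation vertex $v$ with children $v_1,\dots,v_k$, property (R2.i) for $\mu$ supplies indices $i\neq j$ and paths $P_i,P_j$ in $N$ starting at $\mu(v)$, ending at the heads of $\mu(v_i),\mu(v_j)$, whose first arcs $a_i,a_j$ are distinct (hence incomparable in $N$ because they share a tail). By the construction of $\kappa$ in Cases (a) and (c), these two paths are exactly the paths along which $\kappa(v_i)$ and $\kappa(v_j)$ are obtained, and the corresponding lifts $\widetilde{a_i^{\kappa(v)}},\widetilde{a_j^{\kappa(v)}}$ are then by (F3) two distinct arcs of $M$ with common tail $\kappa(v)$, hence incomparable in the tree $M$. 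It remains to trace that for each of the other children $v_\ell$ the image $\kappa(v_\ell)$ indeed lies on a directed $M$-path from $\kappa(v)$, which follows directly from the inductive construction. This also shows $\kappa(v)\in D\setminus L(M)$, since it has at least two children in $M$.

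For (M3), let $x\prec_T y$ with $w=\mathrm{par}_T(x)$. An easy induction on the length of the $T$-path from $y$ to $x$ reduces everything to the single-step case $x=v$, $y=w$, and there a case-by-case inspection of Cases (a)--(d) (and the leaf case) shows that $\kappa(v)$ always sits on a directed $M$-path that passes through $\kappa(w)$; hence $\kappa(v)\preceq_M \kappa(w)$ by Lemma~\ref{lem:MUL-ancestor}. Strict inequality when at least one of $t(v),t(w)$ is a speciation comes from the fact that in Cases (a), (b), (c), and the leaf case, the image $\kappa(v)$ is defined by lifting an arc strictly below $\kappa(w)$ in $N$, so $\kappa(v)$ and $\kappa(w)$ cannot both be the same arc of $M$, and at least one of them is a vertex. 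The main obstacle I anticipate is the case analysis for (M2.i): one must carefully handle the subcase in which several children of a speciation vertex $v$ satisfy $\mu(v_\ell)=\mu(v_m)$, ensuring that the construction still chooses paths $P_\ell,P_m$ whose first arcs are distinct; this is exactly where the separation condition $\mu(v)\in Q_N^2(\mu(v_1),\dots,\mu(v_k))$ supplied by (R2.i), combined with (F3), is essential.
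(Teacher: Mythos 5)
Your proposal is correct and follows essentially the same route as the paper: both verify (M1)--(M3) by tracing the top-down construction of $\kappa_{\mu,f}$, using (R1) for (M1), the built-in choice in Cases (a) and (c) of paths whose distinct first arcs lift to incomparable arcs of $M$ with common tail $\kappa_{\mu,f}(v)$ for (M2.i), and the fact that the construction places each image on a directed $M$-path through its parent's image for (M3). Your explicit ``$f$-compatibility'' invariant and the reduction of (M3) to the single parent--child step are just organizational refinements of what the paper does implicitly via the identification $(M,\chi)=(U^*(N),\chi^*)$.
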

\begin{proof} 
	Let $T=(V,E)$, let $N=(W,F)$ and let $M=(D,U)$.
	Note that in view of Proposition~\ref{prop:iso-mul-fold},
        we may assume w.l.o.g. that 
		$(M,\chi) = (U^*(N), \chi^*)$. Thus, $\rho_N = \rho_M$.

  Clearly the map $\kappa_{\mu,f}$ is well-defined. Also, Property~(M1) holds
  in view of $\mu$ satisfying Property~(R1). 

  To see that $\kappa_{\mu,f}$ satisfies
  Property~(M2) assume that $v\in V\backslash L(T)$. 
	If $t(v)=\squ$ then 
  $\mu(v)$ is an arc of $N$. Hence,
        $\kappa_{\mu,f}(v)\in U$, by the definition of $\kappa_{\mu,f}(v)$.
        So Property~(M2.ii)
  must hold.
	If $t(v)=\bul$, then  Cases (a) and (c) already 
	imply that Property~(M2.i) is satisfied. 

  To see that $\kappa_{\mu,f}$ satisfies Property~(M3), assume that $x,y\in V$
  such that $x\prec_T y$. We start with establishing Property~(M3.i).
  Assume that $ t(x)=t(y)=\squ$. Then  $\mu(x)$ and $\mu(y)$ are arcs of $N$.
  Since $\mu(x)\preceq_N\mu(y)$ as $\mu$
  satisfies Property~(R3.i) it follows that there exists a directed path $P$
  in $N$ from $\rho_N$ to $\mu(x)$ that contains $\mu(y)$. Since $M=U^*(N)$, the
	definition of  $\kappa_{\mu,f}$ implies that $\kappa_{\mu,f}(x)\preceq_M\kappa_{\mu,f}(y)$.
  Thus,  Property~(M3.i) must hold.

  To see that Property~(M3.ii) holds assume that at least
  one of $t(x)$ and $t(y)$
  equals $\bul$. Then $\mu(x)\prec_N \mu(y)$ as $\mu$ satisfies Property~(R3.ii).
  We first consider the case that $t(x)=\bul$. Then $\mu(x)$ is a vertex
  of $N$.  Hence, $\mu(x)\preceq_N h_N(\mu(y))$ in case $\mu(y)\in F$
	 and $\mu(x)\prec_N \mu(y)$ in case $\mu(y)\in W$.
  In either case, there
  exists a directed path in $N$ from $\rho_N$ to $\mu(x)$
  that contains $\mu(y)$.
	Since $M=U^*(N)$, the
	definition of  $\kappa_{\mu,f}$ implies that
  $\kappa_{\mu,f}(x)\prec \kappa_{\mu,f}(y)$. Thus, Property~(M3.ii) holds.

  Finally, assume that  $t(x)=\squ$. Then $\mu(x)$ is an arc of $N$
  and $t(y)=\bul$. Hence, $\mu(y)\in W$. Consequently, there
  exists a directed path in $N$ from $\rho_N$ to $\mu(x)$ that crosses $\mu(y)$.
	Since $M=U^*(N)$, the
	definition of  $\kappa_{\mu,f}$ implies that
  $\kappa_{\mu,f}(x)\prec \kappa_{\mu,f}(y)$. Thus,  Property~(M3.ii) must hold
  in this final case too.
 \qed \end{proof}

As immediate consequences of Theorems~\ref{universal-folding},
\ref{thm:composed} and
Proposition~\ref{claim2} we obtain the result that we promised above.

\begin{theorem}\label{important}
  Suppose $(T;t,\sigma)$ is an event-labeled gene tree and
  $N$ is a  multi-arc free network. Then $N$ is a species network for
  $(T;t,\sigma)$ if and only if $U^*(N)$ is a pseudo-MUL-tree for $(T;t,\sigma)$. 
\label{thm:NM}
\end{theorem}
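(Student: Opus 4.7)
The plan is to observe that this theorem is essentially a packaging of the machinery already developed, combining Theorem~\ref{universal-folding} with the two directions of ``reconciliation transport'' along foldings provided by Theorem~\ref{thm:composed} and Proposition~\ref{claim2}. The crucial starting point is that since $N$ is assumed to be multi-arc free, Theorem~\ref{universal-folding} furnishes a canonical folding map $f^* \colon (U^*(N),\chi^*) \to N$ that sends each vertex of $U^*(N)$ (i.e., each directed path in $N$ starting at $\rho_N$) to its last vertex, and each arc to its corresponding arc in $N$.

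For the ``if'' direction, I would assume that $U^*(N)$ is a pseudo-MUL-tree for $(T;t,\sigma)$, which unfolds to the existence of a MUL-reconciliation map $\kappa$ from $(T;t,\sigma)$ to $(U^*(N),\chi^*)$. Feeding $\kappa$ and $f^*$ into Theorem~\ref{thm:composed} immediately yields the TreeNet-reconciliation map $\mu_{\kappa, f^*}$ from $(T;t,\sigma)$ to $N$, so that $N$ is a species network for $(T;t,\sigma)$. This direction is explicitly flagged in the paper as following from Theorem~\ref{thm:composed}.

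For the ``only if'' direction, I would assume the existence of a TreeNet-reconciliation map $\mu$ from $(T;t,\sigma)$ to $N$. Combining $\mu$ with the folding $f^*$, Proposition~\ref{claim2} produces the MUL-reconciliation map $\kappa_{\mu, f^*}$ from $(T;t,\sigma)$ to $(U^*(N),\chi^*)$ (note that Proposition~\ref{claim2} applies verbatim, since it only requires a pseudo MUL-tree that can be folded to $N$, and Proposition~\ref{prop:iso-mul-fold} together with Theorem~\ref{universal-folding} guarantees that $(U^*(N),\chi^*)$ is such a pseudo MUL-tree). Hence $U^*(N)$ is a pseudo-MUL-tree for $(T;t,\sigma)$.

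There is really no main obstacle at this stage: all the technical work has been carried out in Section~\ref{sec:fold} and in the construction of $\kappa_{\mu,f}$ preceding Proposition~\ref{claim2}, so the proof reduces to a short citation-and-combination argument. The only small point worth making explicit in the write-up is why, in the ``only if'' direction, we are entitled to use $(U^*(N),\chi^*)$ as the target pseudo MUL-tree for Proposition~\ref{claim2}; this follows from Theorem~\ref{universal-folding}, and matches the standing w.l.o.g.\ assumption $(M,\chi)=(U^*(N),\chi^*)$ made throughout Section~\ref{sec:rec-MUL} via Proposition~\ref{prop:iso-mul-fold}.
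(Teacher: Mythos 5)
Your proof is correct and follows exactly the paper's own route: the paper derives Theorem~\ref{important} as an immediate consequence of Theorem~\ref{universal-folding} (providing the canonical folding $f^*$), Theorem~\ref{thm:composed} (for the ``if'' direction) and Proposition~\ref{claim2} (for the ``only if'' direction), which is precisely your citation-and-combination argument.
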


Note that the existence of reconciliations  from an 
event-labeled gene tree to both a MUL-tree $M$ and a network \rev{$N^*$}, does 
not necessarily imply that there exists a folding map from 
$M$ to \rev{$N^*$} -- see Fig.~\ref{fig:no-folding}. This issue is related to the fact 
that Theorem~\ref{important} is stated in terms of  $U^*(N)$ and not $U(N)$, 
and that a network $N$ is not always a folding 
of $U(N)$ (cf. Theorem~\ref{universal-folding}). 

\begin{figure}[tbp]
  \begin{center}
    \includegraphics[width=.95\textwidth]{./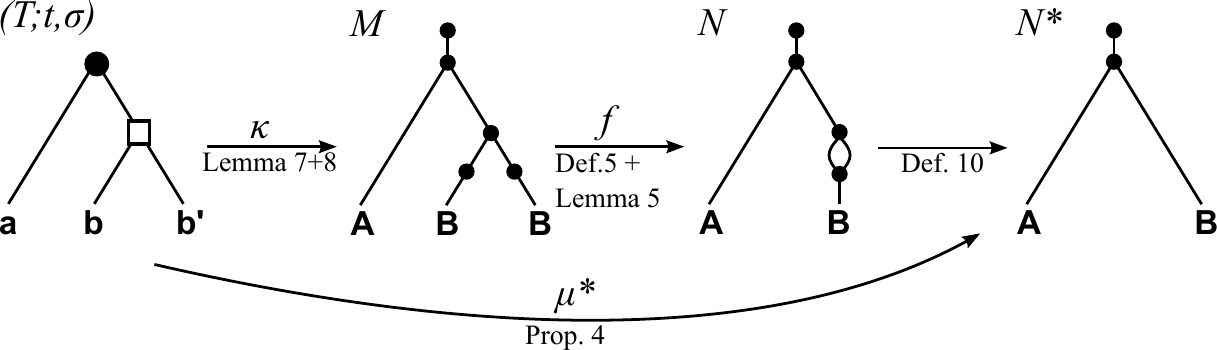}
  \end{center}
	\caption{An event-labeled gene tree $(T;t,\sigma)$ \rev{with 
	speciation events $\bullet$ and duplication events $\square$, }
	the simple
	subdivision $M$ of the  MUL-tree $M(T;t,\sigma)$, the network $N$
	constructed in Definition \ref{def:MULfoldN}, and the network
	$N^*$ associated to $N$ as specified in Definition~\ref{def:Nstar}.
	\rev{Lemma \ref{trivial} and \ref{lem:reconcMUL-pseudoMUL} imply
	that there is a MUL-tree reconciliation map $\kappa$ from 
	from $(T;t,\sigma)$ to $M$. As outlined in the proof of 
	Lemma \ref{lem:MULfoldN}, there is a folding map $f$
	from $M$ to $N$. In addition, Proposition \ref{prop:reconcNstar}  
	implies that there is is a TreeNet-reconciliation map $\mu^*$
	from  $(T;t,\sigma)$ to $N^*$. However,} 
	there does not exist a folding map from $M$ to $N^*$.
    } 
	\label{fig:no-folding}
\end{figure}

\section{Outlook}

In this paper, we have introduced the concept of  a reconciliation map
between an arbitrary event-labeled gene tree and a species network, called a
\emph{TreeNet-reconciliation map}. 
In particular, we have shown that for every \rev{well-behaved} event-labeled gene
tree \rev{$(T;t,\sigma)$} there is always a (multi-arc free) species network $N$ \rev{together with}
a TreeNet-reconciliation map $\mu$ \rev{from $(T;t,\sigma)$ to $N$}
  such that $N$ displays all informative triples in $\mathcal{S}(T;t,\sigma)$. 
Both the network and the reconciliation map can be constructed in polynomial-time. 

These results open up several interesting new avenues for research. For example,  
although we know that for every  event-labeled gene tree \rev{$(T;t,\sigma)$} there
is always a species network $N$ \rev{for that tree}, we do not know much about
the computational complexity of determining whether $(T;t,\sigma)$ can
be reconciled with an arbitrary given 
species network $N$. Clearly, if the unfolded network $U^*(N)$ is
isomorphic to  \emph{some} subdivision
of the MUL-tree $(M(T;t,\sigma),\chi)$, then we can \rev{re-use} the latter
results based on the \rev{MUL-}reconciliation map
$\kappa$ and the folding $f$ from $(M(T;t,\sigma),\chi)$ to $N$ to conclude that $N$ is a 
network for $(T;t,\sigma)$. However, not all TreeNet-reconciliation maps $\mu$
can be expressed in terms of $\kappa$ and $f$; see Fig.\  \ref{fig:no-folding}. 
The results and proof techniques in \cite[Section 7]{huber2016folding}
may offer an  avenue to solving these problems. \rev{In this regards it could also be 
interesting  to understand whether the notion of a folding map could
be adapted so that it more directly captures the relation between MUL-trees and networks}. 

In another direction, the structure of the species network 
obtained from an event-labeled gene tree is heavily dependent on our construction via
MUL-trees and foldings, and so it is not clear which
properties these networks will enjoy. By way of example, 
the bi-connected component in the network in Fig.\ \ref{fig:multiarc-free}
contains two hybrid vertices and it is not clear 
whether or not a less complex network for the given gene tree may exist
(e.g. with fewer hybrid vertices).  It would be interesting to investigate 
properties of the species network, especially how far
they are from being minimal under various criteria. 

Finally, we have only considered 
speciation and duplication events. 
It would be interesting to develop a theory to cope with other events such as 
horizontal gene transfer.  Note that several 
results have been derived for
accommodating gene transfer in reconciliation models 
between gene trees and species trees, both for 
unlabeled \cite{hallett2001efficient} and 
event-labeled gene trees \cite{Hellmuth2017,Nojgaard2018}.

\section*{Acknowledgements}
MH would like to thank the School of Computing Sciences, University of East Anglia, and 
KH and VM would like to thank the Institute of Mathematics and Computer Science, University of Greifswald,  
for helping to make two visits possible during which this work was conceived and developed.

\bibliographystyle{plain}       
\bibliography{biblio}   

\end{document}